\newtheorem{theorem}{Theorem}
\newtheorem{lemma}[theorem]{Lemma}
\newtheorem{proposition}[theorem]{Proposition}
\newtheorem{corollary}[theorem]{Corollary}
\newtheorem{remar}[theorem]{Remark}
\newenvironment{proof}{Proof:\ \ \ }{\QED}
\newenvironment{remark}{\begin{remar}\rm}{\end{remar}}
\newcommand{\QED}{{\unskip\nobreak\hfil\penalty50%
\hskip1em\hbox{}\nobreak\hfil $\Box$%
\parfillskip=0pt \finalhyphendemerits=0 \par\medskip\noindent}}
\newcommand{\bfind}[1]{\index{#1}{\bf #1}}
\newcommand{\gloss}[1]{#1\glossary{\protect #1}}
\newcommand{\n}{\par\noindent}
\newcommand{\sn}{\par\smallskip\noindent}
\newcommand{\mn}{\par\medskip\noindent}
\newcommand{\bn}{\par\bigskip\noindent}
\newcommand{\pars}{\par\smallskip}
\newcommand{\parm}{\par\medskip}
\newcommand{\parb}{\par\bigskip}
\newcommand{\Pzero}{\wp}
\newcommand{\fvklit}[1]{[#1]}
\newcommand{\mbb}[1]{\underline{#1}}
\newcommand{\sep}{^{\rm sep}}
\newcommand{\chara}{\mbox{\rm char}\,}
\newcommand{\trdeg}{\mbox{\rm trdeg}\,}
\newcommand{\subsetuneq}{\mathrel{\raisebox{.8ex}{\footnotesize%
$\displaystyle\mathop{\subset}_{\not=}$}}}
\newcommand{\adresse}{\par\bigskip \small\rm
   Mathematical Sciences Group, 
   University of Saskatchewan, \par
   106 Wiggins Road, 
   Saskatoon, Saskatchewan, Canada S7N 5E6 \par
   email: fvk@math.usask.ca \ \ --- \ \ home page:
   http://math.usask.ca/$\,\tilde{ }\,$fvk/index.html}
\font\tenlv=msbm10 scaled 1200
\font\sevenlv=msbm7 scaled 1200
\font\fivelv=msbm5 scaled 1200
\def\lv #1{{\mathchoice{{\hbox{\tenlv #1}}}{{\hbox{\tenlv #1}}}
{{\hbox{\sevenlv #1}}}{{\hbox{\fivelv #1}}}}}
\newcommand{\N}{\lv N}
\newcommand{\Q}{\lv Q}
\newcommand{\Z}{\lv Z}
\begin{document}
\title{Places of algebraic function fields in arbitrary
characteristic\footnote{The author would like to thank Florian Pop,
Alexander Prestel and Niels Schwartz for very inspiring
discussions, and Roland Auer for a thorough reading of an earlier
version.\n
This research was partially supported by a Canadian NSERC grant.\n
AMS Subject Classification: 12J10, 03C60}}
\author{Franz-Viktor Kuhlmann}
\date{21.\ 8.\ 2003}
\maketitle
\begin{abstract}\noindent
{\footnotesize\rm
We consider the Zariski space of all places of an algebraic function
field $F|K$ of arbitrary characteristic and investigate its structure by
means of its patch topology. We show that certain sets of places with
nice properties (e.g., prime divisors, places of maximal rank,
zero-dimensional discrete places) lie dense in this topology.
Further, we give several equivalent characterizations of fields that are
large, in the sense of F.~Pop's Annals paper {\it Embedding problems
over large fields}. We also study the question whether a field $K$ is
existentially closed in an extension field $L$ if $L$ admits a
$K$-rational place. In the appendix, we prove the fact that the Zariski
space with the Zariski topology is quasi-compact and that it is a
spectral space.}
\end{abstract}
%
%
%
%
\section{Introduction}
%
%
%
\subsection{The Zariski space}
In this paper, we consider algebraic function fields $F|K$ of
arbitrary characteristic. For any place $P$ on $F$, the valuation ring
of $P$ will be denoted by ${\cal O}_P\,$, and its maximal ideal by
${\cal M}_P\,$. By a \bfind{place of $F|K$} we mean a place $P$ of $F$
whose restriction to $K$ is the identity.

Following [Z--SA], we denote by \gloss{$S(F|K)$} the set of all
valuations (or places) of $F$ that are trivial on $K$. It is called the
\bfind{Zariski space} (or \bfind{Zariski--Riemann manifold}) {\bf of}
$F|K$. As in [Z--SA] we shall make no distinction between equivalent
valuations, nor between equivalent places; so we are in fact talking
about the set of all valuation rings of $F$ which contain $K$ (and
therefore, $S(F|K)$ is indeed a set and not a proper class). Also,
$S(F|K)$ can be viewed as the set of all places of $F|K$, as every place
which is trivial on $K$ (i.e., an isomorphism on $K$) is equivalent to a
place whose restriction to $K$ is the identity.

Let $\Pzero$ be a fixed place on $K$. The set of all places of $F$ which
extend $\Pzero$ will be denoted by \gloss{$S(F|K\,;\,\Pzero)$}. Hence,
$S(F|K)=S(F|K\,;\,\mbox{\rm id}_K)$. Every set $S(F|K\,;\,\Pzero)$
carries the \bfind{Zariski-topology}, for which the basic open sets are
the sets of the form
\begin{equation}                            \label{ZT}
\{P\in S(F|K\,;\,\Pzero)\mid a_1\,,\ldots,\,a_k\in {\cal O}_P\}\;,
\end{equation}
where $k\in\N\cup\{0\}$ and $a_1,\ldots,a_k\in F$.

With this topology, $S(F|K\,;\,\Pzero)$ is a spectral space (cf.\ [H]);
in particular, it is quasi-compact. Its associated \bfind{patch
topology} (or \bfind{constructible topology}) is the finer topology
whose basic open sets are the sets of the form
\begin{equation}                            \label{PT}
\{P\in S(F|K\,;\,\Pzero)\mid a_1\,,\ldots,\,a_k\in {\cal O}_P\,;\,
b_1\,,\ldots,\,b_\ell\in {\cal M}_P\}\;,
\end{equation}
where $k,\ell\in\N\cup\{0\}$ and $a_1,\ldots,a_k,b_1,\ldots,b_\ell \in
F$. With the patch topology, $S(F|K\,;\,\Pzero)$ is a totally disconnected
compact Hausdorff space. For the convenience of the reader, we include a
proof for the compactness in the appendix. We derive it from a more
general model theoretic framework. See [Z--SA] or [V] for the more
classical proofs of the quasi-compactness of $S(F|K)$ under the Zariski
topology, and for further details about $S(F|K)$. See also [Z1] and
[Z2] for further information and the application of $S(F|K)$ to
algebraic geometry.

\parm
If $P$ is any place on a field $L$, then we will denote by $LP$ its
residue field, by $v_P$ an associated valuation (unique up to
equivalence) and by $v_PL$ its value group. For every place $P\in
S(F|K\,;\,\Pzero)$, its residue field $FP$ contains $K\Pzero$; we set
$\dim P:= \trdeg FP|K\Pzero\,$. Further, its value group $v_PF$ contains
$v_{\Pzero} K\,$. We set $\mbox{\rm rr} \,P:= \dim_\Q\,\Q\otimes
(v_PF/v_{\Pzero} K)$; this is called the \bfind{rational rank} of
$v_PF/v_{\Pzero}K$. Then we have the well known \bfind{Abhyankar
inequality}:
\begin{equation}                            \label{Ai}
\trdeg F|K \>\geq\>\dim P \,+\, \mbox{\rm rr}\,P \;.
\end{equation}
If equality holds, then we call $P$ an \bfind{Abhyankar place}. The
value group $v_PF$ of every Abhyankar place $P$ is finitely generated
modulo $v_{\Pzero}K$, and the residue field $FP$ is a finitely generated
extension of $K\Pzero$ (see Corollary~\ref{fingentb} below).

As soon as the transcendence degree of $F|K$ is bigger than 1, the
Zariski space $S(F|K)$ will contain ``bad places'' (cf.\ [K5]).
The residue field $FP$ is then not necessarily finitely generated over
$K\Pzero=K$ (even though $F$ is finitely generated over $K$). This can
be a serious obstruction in the search for a $K$-rational specialization
of $P$; cf.\ [J--R]. (We call a place $P$ of a field $L$
{\bf $K$-rational} if $K$ is a subfield of $L$, $P$ is trivial on $K$,
and $LP=K$.) Similarly, the value group $v_PF$ of $P$ may not be
finitely generated. For such bad places, local uniformization is much
more difficult than for Abhyankar places. In fact, we show in [K--K]
that {\it Abhyankar places $P$ of $F|K$ admit local uniformization in
arbitrary characteristic, provided that $FP|K$ is separable}. To the
other extreme, we also prove local uniformization for rational discrete
places ([K6]). (We call a place $P\in S(F|K)$ \bfind{discrete} if its
value group is isomorphic to $\Z$, and \bfind{rational} if $FP=K$.) For
most other places, we have not yet been able to obtain local
uniformization without taking finite extensions of the function field
into the bargain ([K6]).

Therefore, the question arises whether we can ``replace bad places $Q$
by good places $P$''. Certainly, in doing so we want to keep a certain
amount of information unaltered. For instance, we could fix finitely
many elements on which $Q$ is finite and require that also $P$ is
finite on them. This amounts to asking whether the ``good'' places lie
Zariski-dense in the Zariski space. But if we mean by a ``good place''
just a place with finitely generated value group and residue field
finitely generated over $K$, then the answer is trivial: the identity is
a suitable place, as it lies in every Zariski-open neighborhood. The
situation becomes non-trivial when we work with the patch topology
instead of the Zariski topology. In addition, we can even try to keep
more information on values or residues, e.g., rational (in)dependence of
values or algebraic (in)dependence of residues.

In [K--P], such problems are solved in the case of $\chara K=0$ by an
application of the Ax--Kochen--Ershov--Theorem. In the present paper, we
will prove similar results for arbitrary characteristic, using the model
theory of tame fields, which we introduced in [K1] ([K2], [K7]).

%
%
\subsection{Dense subsets of the Zariski space}
{\bf When we say ``dense'' we will always mean ``dense with respect to
the patch topology''.} Throughout this section, we let $F|K$ be a
function field of transcendence degree $n$, and $\Pzero$ a place on $K$.
We set $p=\chara K$ if this is positive, and $p=1$ otherwise.

Our key result is the following generalization of the Main Theorem of
\fvklit{K--P}. Take any ordered abelian group $\Gamma$ and $r\in\N$. If
the direct product $\Gamma\oplus\bigoplus_r\Z$ of $\Gamma$ with $r$
copies of $\Z$ is equipped with an arbitrary extension of the ordering
of $\Gamma$, then it will be called an $r$-extension of $\Gamma$.

An extension $(K_1,P_1)\subseteq (K_2,P_2)$ is called \bfind{immediate}
if the canonical embedding of $K_1P_1$ in $K_2P_2$ and the canonical
embedding of $v_{P_1}K_1$ in $v_{P_2}K_2$ are onto.

\begin{theorem}                                   \label{MKP}
Take a place $Q\in S(F|K\,;\,\Pzero)$ and
\[a_1,\ldots,a_m\in F\;.\]
Then there exists a place $P\in S(F|K\,;\,\Pzero)$ with value group
finitely generated over $v_{\Pzero}K$ and residue field finitely
generated over $K\Pzero$, such that
\[a_iP \>=\> a_iQ \mbox{ \ \ and \ \ } v_Pa_i \>=\> v_Qa_i
\mbox{ \ \ for \ \ } 1\leq i \leq m\;.\]
Moreover, if $r_1$ and $d_1$ are natural numbers satisfying
\[\dim Q\leq d_1\;,\qquad \mbox{\rm rr}\, Q
\leq r_1\mbox{ \ \ and \ \ } 1\leq r_1+d_1\leq n\;,\]
then $P$ may be chosen to satisfy in addition:
\par\smallskip
(a)\ \ $\dim P = d_1$ and $FP$ is a subfield of the rational function
field in $d_1-\dim Q$ variables over the perfect hull of $FQ$,
\par\smallskip
(b)\ \ $\mbox{\rm rr}\, P = r_1$ and $v_P F$ is a subgroup of an
arbitrarily chosen $(r_1-\mbox{\rm rr}\,Q)$-extension of
the $p$-divisible hull of $v_Q F$.
\sn
The above remains true even for $d_1=0=r_1\,$, provided that
each finite extension of $(K,\Pzero)$ admits an immediate extension of
transcendence degree $n$.
\end{theorem}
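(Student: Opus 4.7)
\ \ The conjunction of the conditions $a_iP=a_iQ$ and $v_Pa_i=v_Qa_i$ for $1\le i\le m$ unpacks, through finitely many polynomial relations over $K$, into a basic patch condition on $P$; so it cuts out a patch neighborhood $U$ of $Q$ in $S(F|K\,;\,\Pzero)$, and the task is to produce a place $P\in U$ that also meets the finite-generation and dimensional requirements. My plan is to build the desired value group and residue field of $P$ abstractly on a rational subfield of $F$, and then transport the construction to $F$ by means of the model theory of tame fields.

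First I would pick algebraically independent elements $x_1,\ldots,x_{d_1},\,y_1,\ldots,y_{r_1},\,z_1,\ldots,z_{n-r_1-d_1}\in F$ adapted to $Q$ so that $x_1Q,\ldots,x_{\dim Q}Q$ form a transcendence basis of $FQ|K\Pzero$ and $v_Qy_1,\ldots,v_Qy_{{\rm rr}\,Q}$ form a $\Q$-basis of $\Q\otimes (v_QF/v_{\Pzero}K)$; put $K_1:=K(x_1,\ldots,z_{n-r_1-d_1})$, so that $F|K_1$ is finite. On $K_1$ I would then define a place $P_1$ extending $\Pzero$ which agrees with $Q$ on the first $\dim Q$ of the $x_i$ and the first ${\rm rr}\,Q$ of the $y_j$, sends $x_{\dim Q+1},\ldots,x_{d_1}$ to fresh transcendentals over $FQ$ (realising $FP_1$ as the rational function field in $d_1-\dim Q$ variables over the perfect hull of $FQ$), assigns $v_{P_1}y_j$ for $j>{\rm rr}\,Q$ so that $v_{P_1}K_1$ is the prescribed $(r_1-{\rm rr}\,Q)$-extension of the $p$-divisible hull of $v_QF$, and treats each $z_j$ in whatever way is dictated by $Q$ through the algebraic relations over $K_1$ that witness the residues and values of the $a_i\,$.

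The main obstacle is the extension of $P_1$ from $K_1$ to a place $P$ on $F$ satisfying $a_iP=a_iQ$ and $v_Pa_i=v_Qa_i\,$; once this is achieved, the finite generation of $FP$ over $K\Pzero$ and of $v_PF$ over $v_{\Pzero}K$ is automatic because $F|K_1$ is finite. Here the model theory of tame fields from [K1] and [K7] enters in the style of Ax--Kochen--Ershov: after passing to the henselizations of $(K_1,Q|_{K_1})$ and $(K_1,P_1)$, or in positive residue characteristic to their tame hulls, the two structures can be matched over the common base $K_1$ in a way that respects the finitely many polynomial equations witnessing the $a_i\,$, and pulling $P_1$ back through this matching gives the sought place $P$ on $F$.

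For the final clause with $d_1=0=r_1$, there are no transcendental $x_i$ or $y_j$ available, so $P_1$ has to restrict to an immediate place on every finite subextension of $K_1|K$. The hypothesis that every finite extension of $(K,\Pzero)$ admits an immediate extension of transcendence degree $n$ supplies exactly such a $P_1$ on $K_1=K(z_1,\ldots,z_n)$, and the remainder of the argument then proceeds via the same tame transfer.
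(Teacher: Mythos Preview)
Your overall strategy---assemble the target value group and residue field on a rational subfield and then invoke the Ax--Kochen--Ershov principle for tame fields---is the right one, but the ``matching'' step hides a real gap. Your $K_1$ has full transcendence degree $n$, and on it you are considering two different places: the restriction $Q|_{K_1}$ and your new $P_1$. These do not have the same residue field or value group, even after passing to tame hulls: under $Q$ the elements $x_{\dim Q+1},\ldots,x_{d_1}$ have residues algebraic over $K\Pzero(x_1Q,\ldots,x_{\dim Q}Q)$, whereas under your $P_1$ they go to fresh transcendentals; similarly for the extra $y_j$. So there is no valued-field isomorphism or elementary map \emph{over $K_1$} between the two tame hulls, and hence no mechanism forcing an extension $P$ of $P_1$ to $F$ to satisfy $a_iP=a_iQ$. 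Concretely, if some $a_i$ already lies in $K_1$ and involves $x_{d_1}$ nontrivially, then $a_iP_1$ is a rational expression in a transcendental and cannot equal the algebraic quantity $a_iQ$. The treatment of the $z_j$ is likewise unspecified; under $Q$ the extension in the $z_j$-direction is typically immediate, and there is no data you can simply ``read off $Q$'' to pin down $P_1$ on them.

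The paper applies the tame AKE principle at a \emph{smaller} base. One takes $K_0$ to be $K$ adjoined only the $d:=\dim Q$ residue-transcendence generators and the $r:=\mbox{\rm rr}\,Q$ value-generators, passes to a tame algebraic extension $(L,Q)$ of $(F,Q)$ with value group $\frac{1}{p^\infty}v_QF$ and residue field $(FQ)^{1/p^\infty}$, and lets $K'$ be the relative algebraic closure of $K_0$ in $L$. The crucial point is that $(K',Q')$ is tame with the \emph{same} value group and residue field as $(L,Q)$, so by Theorem~\ref{tameAKE} it is existentially closed in $(L,Q)$. Writing $K'.F=K'(t_1,\ldots,t_k,y)$ with $k=n-d-r$, one then finds $t'_1,\ldots,t'_k,y'\in K'$ satisfying the defining equation of $y$ together with the residue and value conditions for the $a_i$. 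Only after this descent---working inside a finitely generated subfield of $K'$ containing all the witnesses---does one adjoin the extra $d_1-d$ residue-transcendentals and $r_1-r$ value-generators, then an immediate transcendental extension of the remaining degree $n-d_1-r_1$, pass to the henselization $(K_5,P_5)$, and finally embed $F$ into $K_5$ via the Implicit Function Theorem by choosing algebraically independent $t_i^*$ close to the $t'_i$. The idea missing from your sketch is this two-stage structure: first use existential closedness to descend the $a_i$-data into the small tame field $K'$, and only afterwards enlarge to the target value group and residue field.
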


The last condition mentioned in the theorem holds for instance for all
$(K,\Pzero)$ for which the completion is of transcendence degree at
least $n$. Note that the case of $d_1=0=r_1$ only appears when $\Pzero$
is nontrivial.

\pars
If $v_Q a_i\geq 0$ for $1\leq i\leq k$, and if $b_1,\ldots,b_{\ell}\in
F$ such that $v_Qb_j>0$ for $1\leq j\leq\ell$, then we can choose $P$
according to the theorem such that also $v_P a_i\geq 0$ for $1\leq i\leq
k$ and $v_Pb_j>0$ for $1\leq j\leq\ell$. That is, we can find a $P$ with
the required properties in every open neighborhood of $Q$ w.r.t.\ the
patch topology.

If we choose $r_1=n-d_1$ then $P$ will be an
Abhyankar place. Hence our theorem yields:
\begin{corollary}                           \label{cor2}
The set of all places with finitely generated value group modulo
$v_{\Pzero}K$ and with residue field finitely generated over $K\Pzero$
lies dense in $S(F|K\,;\,\Pzero)$. The same holds for its subset of all
Abhyankar places.
\end{corollary}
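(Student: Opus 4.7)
The plan is to read off the corollary directly from Theorem~\ref{MKP}. Recall that a basic patch-open subset of $S(F|K\,;\,\Pzero)$ has the shape
$$U \;=\; \{P\in S(F|K\,;\,\Pzero)\mid a_1,\ldots,a_k\in{\cal O}_P\,;\,b_1,\ldots,b_\ell\in{\cal M}_P\}\,,$$
and is nonempty precisely when it contains some place $Q$, which must then satisfy $v_Q a_i\geq 0$ for $i\leq k$ and $v_Q b_j>0$ for $j\leq\ell$. To prove density (with respect to the patch topology) of a given class of places, it therefore suffices to produce a member of the class inside every such nonempty $U$.

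For the first assertion I would apply Theorem~\ref{MKP} to this $Q$ and the concatenated list $a_1,\ldots,a_k,b_1,\ldots,b_\ell$, without invoking the optional refinements (a) and (b). The theorem then furnishes a place $P\in S(F|K\,;\,\Pzero)$ with $v_P a_i=v_Q a_i\geq 0$ and $v_P b_j=v_Q b_j>0$; hence $P\in U$. Its main conclusion---that $v_PF$ is finitely generated over $v_{\Pzero}K$ and that $FP$ is finitely generated over $K\Pzero$---is precisely the defining property of the first dense set, so this half of the corollary follows.

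For the Abhyankar refinement I would reapply the theorem with the additional parameters $d_1:=\dim Q$ and $r_1:=n-d_1$. By the Abhyankar inequality~(\ref{Ai}), $\mbox{\rm rr}\,Q\leq n-\dim Q=r_1$; trivially $\dim Q\leq d_1$; and $r_1+d_1=n\geq 1$ since $F|K$ has positive transcendence degree, so the optional hypothesis $1\leq r_1+d_1\leq n$ is met without any appeal to the immediate-extension proviso at the end of the theorem. The resulting $P$ still lies in $U$ by the same value computation, and it satisfies $\dim P+\mbox{\rm rr}\,P=d_1+r_1=n=\trdeg F|K$, so $P$ is Abhyankar. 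No step here presents a real obstacle: Theorem~\ref{MKP} has been formulated precisely so that this corollary is an immediate density unpacking, and the entire substance of the argument already lives inside the proof of that theorem.
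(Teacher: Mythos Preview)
Your argument is correct and matches the paper's own derivation: the paragraph immediately preceding Corollary~\ref{cor2} observes exactly that matching the values $v_Qa_i$ and $v_Qb_j$ via Theorem~\ref{MKP} places $P$ in the given basic patch-open set, and then notes that the choice $r_1=n-d_1$ forces $P$ to be Abhyankar. Your explicit verification of the side conditions (taking $d_1=\dim Q$ and invoking the Abhyankar inequality to get $\mbox{\rm rr}\,Q\leq n-d_1$) just spells out what the paper leaves implicit.
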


\parb
In certain cases we would like to obtain value groups of smaller
rational rank; e.g., we may want to get discrete places in the case
where $\Pzero$ is trivial. A modification in the proof of
Theorem~\ref{MKP} yields the following result:
\begin{theorem}                             \label{MKPZ}
Take a place $Q\in S(F|K\,;\,\Pzero)$ and $a_1,\ldots,
a_m\in F$. Choose $r_1$ and $d_1$ such that
\[\dim Q\leq d_1\leq n-1\mbox{ \ \ and \ \ } 1\leq r_1\leq n-d_1\;.\]
Then there is a place $P$ such that
\[a_iP \> = \>a_iQ \;\;\mbox{ for } \;\; 1\leq i \leq m\]
and
\par\smallskip
(a)\ \ $\dim P = d_1$ and $FP$ is a subfield, finitely generated over
$K\Pzero\,$, of a purely trans\-cendental extension of transcendence
degree $d_1-\dim Q$ over the perfect hull of $FQ$,
\par\smallskip
(b)\ \ $\mbox{\rm rr}\, P = r_1$ and $v_P F$ is a subgroup, finitely
generated over $v_{\Pzero}K$, of an arbitrarily chosen $r_1$-extension
of the divisible hull of $v_{\Pzero} K$.
\sn
The above remains true even for $r_1=0$, provided that each finite
extension of $(K,\Pzero)$ admits an immediate extension of transcendence
degree $n$.
\end{theorem}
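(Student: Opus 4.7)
The plan is to adapt the proof of Theorem~\ref{MKP} by weakening the ``preservation of values'' requirement to just ``preservation of residues,'' which in exchange buys the freedom to shrink the rational rank. In Theorem~\ref{MKP} the value group $v_PF$ had to contain (up to the $p$-divisible hull) the values $v_Qa_i$, which forced $\mbox{\rm rr}\,P\geq\mbox{\rm rr}\,Q$. Here we only keep the condition $a_iP=a_iQ$, so a value group sitting inside an $r_1$-extension of the divisible hull of $v_{\Pzero}K$ (with no reference to $v_QF$) is admissible, and $r_1$ may be any number $\geq 1$ (or $0$, under the extra immediate extension hypothesis).

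First I would replace each $a_i$ with $v_Qa_i<0$ by its inverse, so that without loss of generality all $a_i\in{\cal O}_Q$; the condition $a_iP=a_iQ$ then becomes a pure residue condition. Next, enlarge the list $a_1,\ldots,a_m$ by adjoining finitely many elements $y_1,\ldots,y_{d_0}\in {\cal O}_Q$ (with $d_0=\dim Q$) whose residues $y_jQ$ form a transcendence basis of $FQ|K\Pzero$, together with finitely many elements witnessing the algebraicity of each $a_iQ$ over $K\Pzero(y_1Q,\ldots,y_{d_0}Q)$. If we can find a place $P\in S(F|K\,;\,\Pzero)$ which preserves the residues of all these elements, then the embedding $K\Pzero(y_1Q,\ldots,y_{d_0}Q)\hookrightarrow FP$ extends to an embedding of $FQ$ into $FP$ (the perfect hull of $FQ$ into the perfect hull of $FP$ in positive characteristic), through which $a_iQ=a_iP$ holds as required.

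To produce the target place, I pick an arbitrary $r_1$-extension $\Gamma$ of the divisible hull of $v_{\Pzero}K$, and I aim to realize $v_PF$ as a finitely generated subgroup containing $r_1$ ``generic'' elements spanning the new part of $\Gamma$. Choose a transcendence basis of $F|K$ adapted to $Q$: elements $u_1,\ldots,u_{d_0}$ lifting the residues (take $u_j=y_j$), then $u_{d_0+1},\ldots,u_{d_1}$ which are to become value-$0$ elements with residues transcendental over $FQ$, then $u_{d_1+1},\ldots,u_{d_1+r_1}$ which are to be assigned values forming a $\Q$-basis of the new part of $\Gamma$, and finally $n-d_1-r_1$ elements which will become immediate with respect to the subplace generated so far. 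Define $P$ on $K(u_1,\ldots,u_n)$ by these choices; then one invokes the tame-field machinery of \fvklit{K1},\,\fvklit{K7} (Ax--Kochen--Ershov style transfer to henselizations of tame fields) to extend $P$ to all of $F$, using the same mechanism already employed in the proof of Theorem~\ref{MKP} applied to the perfect/tame hull of the constructed sub-valued-field. The residues of $a_1,\ldots,a_m$ come out correctly because the required algebraic identities over $K\Pzero(y_1Q,\ldots,y_{d_0}Q)$ are encoded by elements in our list and are preserved by the extension.

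The main obstacle is the case $r_1=0$ (and a fortiori the borderline $r_1+d_1=n$ issue familiar from Theorem~\ref{MKP}): then we have no ``generic'' values to play with and must realize the last $n-d_1$ transcendence degrees as immediate extensions of the sub-place built out of the residue transcendence basis. This is exactly what the hypothesis ``each finite extension of $(K,\Pzero)$ admits an immediate extension of transcendence degree $n$'' is tailored for; after climbing into a suitable finite extension (needed to perform the tame-field embedding step in residue characteristic $p$), we still have enough immediate room to carry the construction through. The rest of the argument is a routine modification of the proof of Theorem~\ref{MKP}, with the values of the $a_i$ no longer tracked.
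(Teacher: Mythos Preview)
Your opening paragraph captures the right high-level idea: drop the value-preservation requirement, keep only residue-preservation, and thereby gain the freedom to choose $r_1$ independently of $\mbox{\rm rr}\,Q$. But the execution in your third paragraph has a genuine gap. You propose to pick a transcendence basis $u_1,\ldots,u_n$ of $F|K$, define a brand-new place $P$ on $K(u_1,\ldots,u_n)$ by freely prescribing values and residues (agreeing with $Q$ only on $u_1=y_1,\ldots,u_{d_0}=y_{d_0}$), and then ``extend $P$ to all of $F$'' via tame-field machinery. But the elements $a_i$ are algebraic functions of \emph{all} the $u_j$, not just the first $d_0$; once you alter the place on $u_{d_0+1},\ldots,u_n$ you lose all control over $a_iP$. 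Adjoining finitely many ``witnesses of algebraicity'' to your list does not help: those witnesses are themselves elements of $F$ whose $P$-residues are not yet determined, and even if $a_iP$ ends up satisfying the same minimal polynomial over $K\Pzero(y_1Q,\ldots,y_{d_0}Q)$ as $a_iQ$, nothing forces it to be the \emph{same} root. Theorem~\ref{tameAKE} is not a device for extending a place from a subfield to $F$ with prescribed residues of specified elements of $F$; it is an existential-closedness statement, and to invoke it one must already have both valued fields in hand, with the smaller one tame.

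The paper's argument avoids this by staying inside the given place $Q$ throughout the critical step. It modifies the proof of Theorem~\ref{MKP} in two ways: first, it replaces the tame algebraic extension $(L,Q)$ of $(F,Q)$ by one with \emph{fully divisible} (not merely $p$-divisible) value group; second, it shrinks the base field $K_0$ from $K(x_1,\ldots,x_r,y_1,\ldots,y_d)$ down to $K(x_1,y_1,\ldots,y_d)$ (or to $K(y_1,\ldots,y_d)$ when $\Pzero$ is nontrivial). The relative algebraic closure $K'$ of this smaller $K_0$ in $L$ is still tame with residue field $(FQ)^{1/p^\infty}$ by Lemma~\ref{trac}, but now its value group is divisible (namely $\Q\, v_Qx_1$, respectively the divisible hull of $v_{\Pzero}K$) and hence existentially closed in every ordered-group extension. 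Theorem~\ref{tameAKE} therefore still applies and produces $t'_i,y'\in K'$ satisfying the defining equations of $K'.F$ over $K'$ \emph{together with} the residue conditions $(g_i(t'_1,\ldots,t'_k,y')/h_i(t'_1,\ldots,t'_k))\,Q'=a_iQ$ --- at the cost of dropping the value conditions from~(iii), since $v_{Q'}K'$ need not contain the $v_Qa_i$. The finitely generated piece $K_1\subset K'$ then has value group of rational rank only $1$ (respectively $0$) over $v_{\Pzero}K$, so in the subsequent construction of $K_3$ one is free to adjoin $r_1-1$ (respectively $r_1$) new rationally independent values, landing in an arbitrary $r_1$-extension of the divisible hull of $v_{\Pzero}K$. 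The remaining steps (building $K_4$, $K_5$ and embedding $F$ via the Implicit Function Theorem) are unchanged from the proof of Theorem~\ref{MKP}.
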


We deduce two corollaries for the Zariski space $S(F|K)$; we leave it to
the reader to formulate analogous results for $S(F|K\,;\,\Pzero)$. A place
$P\in S(F|K)$ of dimension $\trdeg F|K\, -1$ is called a \bfind{prime
divisor of $F|K$} (one also says that $P$ has \bfind{codimension 1}).
Every prime divisor is an Abhyankar place, has value group isomorphic to
$\Z$ and a residue field which is finitely generated over $K$ (cf.\
Lemma~\ref{prelBour} below). From the above theorem, applied with
$d_1=n-1$, we obtain the following result:
\begin{corollary}
The prime divisors of $F|K$ lie dense in $S(F|K)$.
\end{corollary}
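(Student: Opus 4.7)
The plan is to deduce the corollary directly from Theorem~\ref{MKPZ} with $\Pzero=\mbox{\rm id}_K$, $d_1=n-1$, and $r_1=1$. To check density, I fix a non-empty basic patch-open subset
\[U \>=\> \{P\in S(F|K)\mid a_1,\ldots,a_k\in {\cal O}_P;\ b_1,\ldots,b_\ell\in {\cal M}_P\}\]
and aim to produce a prime divisor inside $U$.

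First I would choose a non-trivial $Q\in U$. If some $b_j\neq 0$, then any $Q\in U$ automatically satisfies $v_Q b_j>0$ and is non-trivial. If all $b_j=0$, the constraints $b_j\in {\cal M}_P$ are vacuous, so $U$ is the Zariski-open set $\{P\mid a_i\in {\cal O}_P\}$, and a non-trivial $Q\in U$ can be manufactured by localizing the finitely generated $K$-algebra $K[a_1,\ldots,a_k]\subseteq F$ at a height-one prime (pass to the integral closure if necessary) and extending the resulting valuation to one on $F$.

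Next, because $Q$ is non-trivial and the value group $v_QF$ is torsion-free as an ordered abelian group, $\mbox{\rm rr}\,Q\geq 1$; the Abhyankar inequality~(\ref{Ai}) therefore forces $\dim Q\leq n-1$. The hypotheses of Theorem~\ref{MKPZ} thus hold with $d_1=n-1$ and $r_1=1$, and I apply it to $Q$ together with the combined tuple $(a_1,\ldots,a_k,b_1,\ldots,b_\ell)$. The theorem produces a place $P\in S(F|K)$ with $\dim P=n-1$, $\mbox{\rm rr}\,P=1$, residue field finitely generated over $K$, and value group a finitely generated subgroup of the (essentially unique) $1$-extension $\Z$ of the divisible hull of $v_\Pzero K=\{0\}$, which forces $v_PF\cong\Z$; moreover $a_iP=a_iQ$ and $b_jP=b_jQ$ for every $i,j$.

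Finally I observe that $P$ is a prime divisor by the very definition given in the excerpt ($\dim P=\trdeg F|K-1$), and matching residues with $Q\in U$ yields $a_iP\neq\infty$ and $b_jP=0$, i.e.\ $a_i\in {\cal O}_P$ and $b_j\in {\cal M}_P$. Hence $P\in U$, completing the density argument. There is no real obstacle: the proof is essentially the specialization of Theorem~\ref{MKPZ} at $(d_1,r_1)=(n-1,1)$, and the only point to watch is the exclusion of the trivial place (for which $\dim=n$ violates $d_1\leq n-1$), which is precisely why the non-triviality of the chosen $Q$ matters.
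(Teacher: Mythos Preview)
Your proposal is correct and follows the paper's own approach: the corollary is stated immediately after Theorem~\ref{MKPZ} with the one-line justification ``applied with $d_1=n-1$,'' which forces $r_1=1$ exactly as you chose. Your write-up is more careful than the paper's in one respect: you explicitly handle the possibility that the only place in the given basic patch-open set is the trivial one (for which $\dim Q=n$ would violate the hypothesis $\dim Q\leq d_1=n-1$ of Theorem~\ref{MKPZ}), whereas the paper passes over this silently. One small remark on that auxiliary step: your height-one-prime construction for a non-trivial $Q$ tacitly assumes $K[a_1,\ldots,a_k]$ has positive Krull dimension; when all $a_i$ are algebraic over $K$ the set $U$ is all of $S(F|K)$ and any non-trivial place works, so the case split is harmless.
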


If on the other hand we choose $d_1=\dim Q$, then $FP$ will be contained
in a finite purely inseparable extension of $FQ$. If $\dim Q=0$, i.e.,
$FQ|K$ is algebraic, then it follows that $FP$ is a finite extension of
$K$. If in addition $K$ is perfect and $Q$ is rational, then also $P$ is
rational. With $r_1=1$, we obtain:
\begin{corollary}                           \label{discrat}
If $K$ is perfect, then the discrete rational places lie dense in
the space of all rational places of $F|K$.
\end{corollary}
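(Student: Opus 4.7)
My plan is to derive the corollary directly from Theorem~\ref{MKPZ} by choosing the parameters so that the residue field of the resulting place is forced to be $K$ and its value group to be $\Z$. Concretely, starting from a rational place $Q\in S(F|K)$ (so $\dim Q=0$), I would apply Theorem~\ref{MKPZ} with $d_1:=0$ and $r_1:=1$: for a function field $n:=\trdeg F|K\geq 1$, so the required inequalities $\dim Q\leq d_1\leq n-1$ and $1\leq r_1\leq n-d_1$ all hold.

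Take an arbitrary basic patch-open neighbourhood of $Q$,
\begin{equation*}
U \;=\; \{P\in S(F|K)\mid a_1,\ldots,a_k\in{\cal O}_P,\ b_1,\ldots,b_\ell\in{\cal M}_P\},
\end{equation*}
so that $v_Qa_i\geq 0$ and $v_Qb_j>0$; in particular $a_iQ\in FQ=K$ and $b_jQ=0$ for all $i,j$. Feeding the combined list $a_1,\ldots,a_k,b_1,\ldots,b_\ell$ into the theorem yields a place $P$ with $a_iP=a_iQ\in K$ and $b_jP=b_jQ=0$; these equalities force $a_i\in{\cal O}_P$ and $b_j\in{\cal M}_P$, so $P\in U$.

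It then remains to read off from conclusions~(a) and~(b) that $P$ is rational and discrete. By (a), $FP$ is finitely generated over $K\Pzero=K$ and sits inside the perfect hull of $FQ=K$, since the transcendence degree $d_1-\dim Q=0$ collapses the purely transcendental extension in the statement. As $K$ is perfect this hull equals $K$, so $FP=K$ and $P$ is rational. By (b), $v_PF$ is a finitely generated subgroup of some $1$-extension of the divisible hull of $v_{\Pzero}K$; here $\Pzero$ is the identity, so the latter group is trivial and the $1$-extension is just $\Z$. Since $\mbox{\rm rr}\,P=1$ this subgroup cannot be zero, hence $v_PF\cong\Z$ and $P$ is discrete. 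The only subtle point in the plan is noticing that the hypothesis ``$K$ perfect'' is exactly what is needed in~(a) to identify the perfect hull of $K$ with $K$; everything else is a direct reading of Theorem~\ref{MKPZ}.
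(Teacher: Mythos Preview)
Your proof is correct and follows essentially the same route as the paper: apply Theorem~\ref{MKPZ} with $d_1=\dim Q=0$ and $r_1=1$, then use (a) together with the perfectness of $K$ to conclude $FP=K$, and (b) together with $v_{\Pzero}K=0$ to conclude $v_PF\cong\Z$. The paper's discussion just before the corollary is exactly this argument, phrased slightly differently (it speaks of $FP$ lying in a finite purely inseparable extension of $FQ$, which for $FQ=K$ perfect collapses to $K$).
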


A place $P\in S(F|K\,;\,\Pzero)$ is called \bfind{rational} if $FP=
K\Pzero\,$. Further, $P$ is called a \bfind{place of maximal rank} if
$v_{\Pzero}K$ is a convex subgroup of $v_PF$, which implies that the
ordering of $v_PF$ canonically induces an ordering on
$v_PF/v_{\Pzero}K$, and the rank $\mbox{rk}\, v_PF/v_{\Pzero}K$ of
$v_PF/v_{\Pzero}K\,$ with respect to the induced ordering (the number of
proper convex subgroups of $v_PF/v_{\Pzero}K$) is equal to the
transcendence degree of $F|K$. Since the rational rank is always
bigger than or equal to the rank, inequality (\ref{wtdgeq}) of
Corollary~\ref{fingentb} shows that $\mbox{rk}\,v_PF/v_{\Pzero}K\leq\trdeg
F|K$. Every place of maximal rank is a zero-dimensional Abhyankar place.
We take $r_1=\trdeg F|K$ and $\bigoplus_{r_1}\Z\oplus v_{\Pzero}K$ to be
lexicographically ordered (that is, $v_{\Pzero}K$ is a convex subgroup
of $\bigoplus_{r_1}\Z\oplus v_{\Pzero}K$ and $(\bigoplus_{r_1}\Z\oplus
v_{\Pzero}K)/ v_{\Pzero}K$ is of rank $r_1$). Then we obtain from
Theorem~\ref{MKPZ}:
\begin{corollary}                           \label{maxrat}
If $K\Pzero$ is perfect, then the rational places of maximal rank lie
dense in the subspace of all rational places in $S(F|K\,;\,\Pzero)$.
\end{corollary}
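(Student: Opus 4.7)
The plan is to apply Theorem~\ref{MKPZ} with the parameter choice indicated in the paragraph preceding the statement. Fix an arbitrary rational place $Q\in S(F|K;\Pzero)$, so that $\dim Q=0$ and $FQ=K\Pzero$, and fix a basic patch-open neighborhood of $Q$: this is determined by elements $a_1,\ldots,a_k,b_1,\ldots,b_\ell\in F$ satisfying $v_Qa_i\ge 0$ and $v_Qb_j>0$, and consists of all $P'$ with $v_{P'}a_i\ge 0$ and $v_{P'}b_j>0$. Set $n:=\trdeg F|K$; the case $n=0$ is vacuous (then $F=K$ and $Q$ itself is of maximal rank), so I may assume $n\ge 1$.

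I then invoke Theorem~\ref{MKPZ} with $d_1=0$ and $r_1=n$, which satisfies the numerical hypotheses $\dim Q\le d_1\le n-1$ and $1\le r_1\le n-d_1$, and input $Q$ together with the $a_i$ and $b_j$. For the ``arbitrarily chosen'' $r_1$-extension of the divisible hull of $v_{\Pzero}K$, I pick the lex-ordered group $(v_{\Pzero}K)_\Q\oplus\bigoplus_n\Z$ in which $(v_{\Pzero}K)_\Q$ sits as the bottom convex subgroup; this realises $\bigoplus_n\Z\oplus v_{\Pzero}K$ as in the paragraph before the corollary. The resulting place $P$ satisfies $a_iP=a_iQ$ and $b_jP=b_jQ=0$, forcing $v_Pa_i\ge 0$ and $v_Pb_j>0$, so $P$ lies in the chosen patch-open neighborhood.

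Rationality of $P$ is immediate from conclusion~(a): $\dim P=0$ and $FP$ is contained in a purely transcendental extension of transcendence degree $d_1-\dim Q=0$ over the perfect hull of $FQ=K\Pzero$, which, by perfectness of $K\Pzero$, is just $K\Pzero$; hence $FP=K\Pzero$.

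For the maximal-rank property, conclusion~(b) gives $\mbox{\rm rr}\,P=n$ with $v_PF$ finitely generated over $v_{\Pzero}K$ inside the chosen lex-ordered extension. The divisible hull $(v_{\Pzero}K)_\Q$ is convex in the ambient group, so $H:=v_PF\cap(v_{\Pzero}K)_\Q$ is convex in $v_PF$; moreover $v_PF/H$ embeds into $\bigoplus_n\Z$ with rational rank $n$, and therefore has rank $n$ in the lex ordering. The remaining (and main) point is to verify that the finite torsion quotient $H/v_{\Pzero}K$ is trivial, so that $H=v_{\Pzero}K$ and $v_{\Pzero}K$ itself is convex in $v_PF$; I expect this to follow from the explicit construction in the proof of Theorem~\ref{MKPZ}, which should produce $v_PF=v_{\Pzero}K\oplus\bigoplus_n\Z$ with the prescribed lex ordering rather than merely a subgroup of a larger extension. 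Granting this, $v_{\Pzero}K$ is convex in $v_PF$ and $v_PF/v_{\Pzero}K$ has rank $n=\trdeg F|K$, so $P$ is a rational place of maximal rank.
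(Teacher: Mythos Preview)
Your approach is identical to the paper's: apply Theorem~\ref{MKPZ} with $d_1=0$, $r_1=n=\trdeg F|K$, and the lexicographic ordering that places (the divisible hull of) $v_{\Pzero}K$ at the bottom; the rationality check via conclusion~(a), using perfectness of $K\Pzero$, and the patch-neighbourhood verification are exactly what the paper intends.

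You have, however, honestly flagged a gap that the paper's one-line derivation also glosses over. Conclusion~(b) of Theorem~\ref{MKPZ} only places $v_PF$ inside an $r_1$-extension of the \emph{divisible hull} of $v_{\Pzero}K$, so the convex subgroup $H=v_PF\cap(v_{\Pzero}K)_\Q$ of $v_PF$ may strictly contain $v_{\Pzero}K$; in that case $v_{\Pzero}K$ is not convex in $v_PF$ and the definition of ``place of maximal rank'' is not literally met. Your hope that the construction in Section~\ref{sectpr2} actually yields $v_PF=v_{\Pzero}K\oplus\bigoplus_n\Z$ is not borne out: there $K_1|K$ is a genuine finite extension (generated by the $a'_i$, the $t'_j$, $y'$, and various coefficients taken from the relative algebraic closure $K'$ of $K$ in the tame field $L$), so $v_{P_1}K_1/v_{\Pzero}K$ is finite but need not be trivial, and the resulting $v_PF$ has finite index in $v_{P_3}K_3=v_{P_1}K_1\oplus\bigoplus_n\Z$ rather than equalling $v_{\Pzero}K\oplus\bigoplus_n\Z$. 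So the gap you noticed is real and is not closed by appealing to the construction as written; the paper itself is imprecise on precisely this point (it writes $\bigoplus_{r_1}\Z\oplus v_{\Pzero}K$ where its own Theorem~\ref{MKPZ} only delivers a subgroup of an extension of the divisible hull).
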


In order to decrease the dimension of places, we complement
Theorem~\ref{MKPZ} by the following theorem, which we will
prove in Section~\ref{sectpr3}:

\begin{theorem}                             \label{MTdropd}
Take a place $Q\in S(F|K\,;\,\Pzero)$ and $a_1,\ldots,
a_m\in F$. Assume that $\dim Q>0$. Choose $r_1$ and $d_1$ such that
\[\mbox{\rm rr} \,Q +1 \leq r_1\leq n \mbox{ \ \ and \ \ }
0\leq d_1\leq n-r_1\;.\]
Then there is a place $P$ such that
\begin{equation}                            \label{valeq}
v_Pa_i \> = \>v_Qa_i \;\;\mbox{ for } \;\; 1\leq i \leq m
\end{equation}
and
\par\smallskip
(a)\ \ $\dim P = d_1$ and $FP$ is a subfield, finitely generated over
$K\Pzero\,$, of a purely trans\-cendental extension of the algebraic
closure of $FQ$,
\par\smallskip
(b)\ \ $\mbox{\rm rr}\, P = r_1$ and $v_P F$ is a subgroup, finitely
generated over $v_{\Pzero}K$, of an arbitrarily chosen $(r_1-\mbox{\rm
rr}\,Q-1)$-extension of a group $\Gamma$ which admits $\Z$ as a convex
subgroup such that $\Gamma/\Z$ is isomorphic to a subgroup of the
$p$-divisible hull of $v_QF$ which is also finitely generated over
$v_{\Pzero}K$.
\end{theorem}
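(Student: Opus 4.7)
The hypothesis $\dim Q>0$ lets us compose $Q$ (after a first refinement) with a prime divisor of its residue field, producing the new $\Z$ convex subgroup at the bottom of $\Gamma$ and accounting for the $+1$ in $r_1\geq\mbox{rr}\,Q+1$. My plan is to reduce Theorem~\ref{MTdropd} to Theorem~\ref{MKP} by sandwiching such a composition between two applications of that theorem.

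I would first apply Theorem~\ref{MKP} to $Q$ with parameters $d_1^{*}=\dim Q$ and $r_1^{*}=\mbox{rr}\,Q$ (admissible since $\dim Q\geq1$), obtaining a place $Q_0$ preserving both $a_iQ_0=a_iQ$ and $v_{Q_0}a_i=v_Qa_i$, with $FQ_0$ finitely generated over $K\Pzero$ and contained in the perfect hull of $FQ$, and with $v_{Q_0}F$ finitely generated over $v_{\Pzero}K$ and contained in the $p$-divisible hull of $v_QF$. Since $\dim Q_0=\dim Q\geq1$, the function field $FQ_0|K\Pzero$ admits prime divisors; by Corollary~\ref{cor2} applied internally to $FQ_0|K\Pzero$, I may choose one, say $\pi$, on which every nonzero $a_iQ_0\in FQ_0$ is a unit. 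Setting $Q_1:=Q_0\circ\pi$ yields $\dim Q_1=\dim Q-1$, $\mbox{rr}\,Q_1=\mbox{rr}\,Q+1$, and the value group $v_{Q_1}F$ with $v_\pi(FQ_0)\cong\Z$ as its smallest nontrivial convex subgroup and quotient $v_{Q_0}F$ --- precisely the shape required of $\Gamma$ in (b). The choice of $\pi$ also ensures that $v_{Q_1}a_i$ lies in the canonical image of $v_{Q_0}F\hookrightarrow v_{Q_1}F$ and equals $v_Qa_i$ there.

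Finally, I would apply Theorem~\ref{MKP} to $Q_1$ with the target parameters $d_1,r_1$. When $d_1\geq\dim Q-1$, the hypotheses hold and yield a place $P$ with $\dim P=d_1$, $FP$ inside a rational function field in $d_1-\dim Q+1$ variables over the perfect hull of $FQ_1$, and $v_PF$ inside an $(r_1-\mbox{rr}\,Q-1)$-extension of the $p$-divisible hull of $v_{Q_1}F$. Since $FQ_1$ is a finitely generated extension of $K\Pzero$ of transcendence degree $\dim Q-1<\trdeg\overline{FQ}|K\Pzero$, it embeds over $K\Pzero$ into $\overline{FQ}$, and (a) follows. In the remaining subcase $d_1<\dim Q-1$, I would instead take $\pi$ to be a $\Z$-valued place of $FQ_0|K\Pzero$ whose residue field has transcendence degree $d_1$; such a place exists by Abhyankar's bound since $d_1+1\leq\dim Q_0$, e.g.\ as a prime divisor on a subvariety of dimension $d_1+1$ in a projective model of $FQ_0|K\Pzero$.

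The main obstacle I anticipate is preserving the distinguished $\Z$ convex subgroup of $v_{Q_1}F$ throughout the third step: Theorem~\ref{MKP} applied in its literal form places $v_PF$ inside the $p$-divisible hull of $v_{Q_1}F$, which replaces the bottom $\Z$ by $\Z[1/p]$ and thus no longer admits $\Z$ as a convex subgroup. Either a mild strengthening of Theorem~\ref{MKP} is required (keeping the $\pi$-uniformizer's value integral so that the bottom $\Z$ survives the passage to the $p$-divisible hull), or the third step must be carried out via a tailored construction that respects the convex filtration of $v_{Q_1}F$ and adds only the $(r_1-\mbox{rr}\,Q-1)$ new $\Z$-ranks strictly on top of $\Gamma$.
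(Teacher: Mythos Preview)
Your three-stage plan (apply Theorem~\ref{MKP}, compose with a discrete place on the residue function field, then apply Theorem~\ref{MKP} again) is exactly the paper's strategy. The paper, however, drops all the way to dimension~$0$ in the middle step: it uses Theorem~\ref{exrp} over the algebraic closure of $K\Pzero$ to obtain a zero-dimensional place of $FQ_1|K\Pzero$, then Theorem~\ref{MKPZ} to make it discrete; the final application of Theorem~\ref{MKP} then raises the dimension to any $d_1\geq 0$ in one stroke. This avoids your case distinction on whether $d_1\geq\dim Q-1$ and the somewhat delicate construction of a discrete place of prescribed intermediate dimension.

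The genuine gap in your argument is the value preservation across the composition. There is no ``canonical image of $v_{Q_0}F\hookrightarrow v_{Q_1}F$'': for $Q_1=Q_0\circ\pi$ one has a short exact sequence $0\to\Z\to v_{Q_1}F\to v_{Q_0}F\to 0$, and it need not split. What (\ref{valeq}) requires (see the remark after the theorem) is an order-preserving embedding $\iota$ of the group $G:=\sum_i\Z\, v_Qa_i$ into $v_{Q_1}F$ with $\iota(v_Qa_i)=v_{Q_1}a_i$. For $\iota$ to be well defined you need: whenever $\sum_i n_i\,v_Qa_i=0$, the element $c=\prod_i a_i^{n_i}$ (which has $v_{Q_0}c=0$) satisfies $v_\pi(cQ_0)=0$. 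Your condition ``each nonzero $a_iQ_0$ is a $\pi$-unit'' only controls those $a_i$ with $v_{Q_0}a_i=0$ individually; it does not force the residues of these relation-elements $c$ to be $\pi$-units. The paper fixes this by choosing a $\Z$-basis $\gamma_1,\ldots,\gamma_\ell$ of $G$, lifting to $b_j\in F$ with $v_{Q_0}b_j=\gamma_j$, forming $a'_i:=a_i^{-1}\prod_j b_j^{e_{ij}}$ (so $v_{Q_0}a'_i=0$), and then choosing the discrete place so that each $a'_iQ_0$ is a unit; the map $\gamma_j\mapsto v_{Q_1}b_j$ then gives the required embedding because the $\gamma_j$ are rationally independent and $\Z$ is convex in $v_{Q_1}F$. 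Your anticipated obstacle about the $p$-divisible hull swallowing the bottom $\Z$ in the final step is real, and the paper's reduction ``in view of Theorem~\ref{MKP}'' passes over the same point; it is a genuine technical wrinkle, but it is orthogonal to the gap above.
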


\n
Here, assertion (\ref{valeq}) means that there is an embedding
$\iota$ of the group $\sum_{i=1}^{m}\Z v_Q a_i$ in $v_P F$ such that
$v_P a_i=\iota v_Q a_i\,$.

\parm
Taking $\Pzero=\mbox{id}_K$ and $d_1=0$, we obtain:
\begin{corollary}
The zero-dimensional places with finitely generated value group and
residue field a finite extension of $K$ lie dense in $S(F|K)$.
\end{corollary}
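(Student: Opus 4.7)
My plan is to derive the corollary as a direct application of Theorem~\ref{MTdropd}, by translating density in the patch topology into an instance of the theorem's hypotheses.

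First I would unpack the target. A basic patch-open neighborhood of $Q\in S(F|K)$ has the form
\[U\>=\>\{P\in S(F|K)\mid a_1,\ldots,a_k\in{\cal O}_P\,;\,b_1,\ldots,b_\ell\in{\cal M}_P\}\]
for some elements $a_i\in{\cal O}_Q$ and $b_j\in{\cal M}_Q$. To establish the corollary it suffices to exhibit, for every such $U$, a place $P\in U$ that is zero-dimensional, has finitely generated value group, and has residue field finite over $K$.

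The main case is $\dim Q\geq 1$. Here I would apply Theorem~\ref{MTdropd} with $\Pzero=\mbox{\rm id}_K$, $d_1=0$, and any permissible $r_1$ (for instance $r_1=n$, which is allowed since $\mbox{\rm rr}\,Q\leq n-1$ by Abhyankar), to the combined list $a_1,\ldots,a_k,b_1,\ldots,b_\ell$. The theorem produces a place $P$ with $v_Pa_i=v_Qa_i$ and $v_Pb_j=v_Qb_j$, so the conditions $v_Qa_i\geq 0$ and $v_Qb_j>0$ that put $Q\in U$ are inherited by $P$; hence $P\in U$. Part (a) of the theorem gives $\dim P=0$ with $FP$ finitely generated over $K$, and these two together force $FP|K$ to be finite. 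Part (b) gives $v_PF$ finitely generated over $v_{\Pzero}K=\{0\}$, i.e.\ finitely generated outright.

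The one remaining point is the edge case $\dim Q=0$, which is excluded by the hypothesis $\dim Q>0$ in Theorem~\ref{MTdropd}. Here $Q$ is already zero-dimensional, and one instead invokes Theorem~\ref{MKP} with $d_1=0$ and $r_1=\max(\mbox{\rm rr}\,Q,1)$ (the constraint $1\leq r_1+d_1\leq n$ being satisfied whenever $n\geq 1$; for $n=0$ the corollary is vacuous since then $F|K$ is finite). That theorem likewise supplies $v_Pa_i=v_Qa_i$ for the combined list, keeping $P$ inside $U$, and delivers a $P$ with finitely generated value group and residue field finite over $K\Pzero=K$. I do not expect any conceptual obstacle here: the content of the corollary is already absorbed into the previously established theorems, and the only work is to translate membership in a patch-open set into the value-preservation clauses and to dispatch the zero-dimensional edge case.
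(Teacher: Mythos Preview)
Your argument is correct and follows the same route as the paper, which simply records that the corollary results from Theorem~\ref{MTdropd} upon taking $\Pzero=\mbox{id}_K$ and $d_1=0$. You are in fact more careful than the paper's one-line derivation: you explicitly note that Theorem~\ref{MTdropd} assumes $\dim Q>0$ and correctly fall back on Theorem~\ref{MKP} (with $d_1=0$) for the residual case $\dim Q=0$, a point the paper leaves implicit.
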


Applying first Theorem~\ref{MTdropd} and then Theorem~\ref{MKPZ}, we
obtain:
\begin{theorem}                             \label{MTbetter}
Take $d_1,r_1\in\N$ such that $d_1\geq 0$, $r_1\geq 1$ and $d_1+r_1
\leq\trdeg F|K$. Then the places $P$ with
\pars
(a) residue field $FP$ a subfield of a purely transcendental extension
of the algebraic closure of $K\Pzero$, finitely generated of
transcendence degree $d_1$ over $K\Pzero$, and
\pars
(b) value group $v_PF$ a subgroup of some fixed $r_1$-extension of the
divisible hull of $v_{\Pzero}K$ such that $v_PF/v_{\Pzero}K$ is of
rational rank $r_1$ and finitely generated,
\sn
lie dense in $S(F|K\,;\,\Pzero)$.
\end{theorem}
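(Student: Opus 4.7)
The plan is to combine Theorems~\ref{MTdropd} and \ref{MKPZ} in a two-stage process. First, use Theorem~\ref{MTdropd} to replace the given place $Q$ by an intermediate zero-dimensional place $Q'$ whose residue field is algebraic over $K\Pzero\,$; then feed $Q'$ into Theorem~\ref{MKPZ} to assemble a place $P$ with the prescribed dimension and value-group structure.

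More precisely, fix $Q\in S(F|K\,;\,\Pzero)$ and a basic open patch neighborhood $U$ of $Q$ specified by elements $a_1,\ldots,a_k\in {\cal O}_Q$ and $b_1,\ldots,b_\ell\in {\cal M}_Q\,$. If $\dim Q=0$, simply set $Q':=Q$, so that $FQ'$ is already algebraic over $K\Pzero\,$. Otherwise $\dim Q>0$, and we invoke Theorem~\ref{MTdropd} with the list $a_1,\ldots,a_k,b_1,\ldots,b_\ell$ and parameters $d_1':=0$ and $r_1':=\mbox{\rm rr}\,Q+1$. The Abhyankar inequality (\ref{Ai}) gives $\mbox{\rm rr}\,Q\leq n-\dim Q\leq n-1$, so the hypotheses $\mbox{\rm rr}\,Q+1\leq r_1'\leq n$ and $0\leq d_1'\leq n-r_1'$ hold. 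The resulting place $Q'$ has $\dim Q'=0$ with $FQ'$ finitely generated and algebraic over $K\Pzero\,$, hence $FQ'\subseteq \overline{K\Pzero}\,$. Because Theorem~\ref{MTdropd} guarantees $v_{Q'}c=v_Qc$ for each $c$ in our list, we have $Q'\in U$.

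Next, apply Theorem~\ref{MKPZ} to $Q'$ with the same list of elements and the given parameters $d_1,r_1\,$. The hypothesis $0=\dim Q'\leq d_1\leq n-1$ is met, since $r_1\geq 1$ together with $d_1+r_1\leq n$ forces $d_1\leq n-1$, and $1\leq r_1\leq n-d_1$ is assumed. We obtain a place $P$ with $\dim P=d_1\,$, $\mbox{\rm rr}\,P=r_1\,$, residue field $FP$ a subfield, finitely generated over $K\Pzero\,$, of a purely transcendental extension of the perfect hull of $FQ'\,$, and value group $v_PF$ finitely generated over $v_{\Pzero}K$ inside the prescribed $r_1$-extension of the divisible hull of $v_{\Pzero}K\,$. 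Since the perfect hull of $FQ'$ still lies in $\overline{K\Pzero}\,$, condition~(a) of the theorem is satisfied, and the value-group description is exactly condition~(b). Finally, $a_iP=a_iQ'$ gives $a_i\in{\cal O}_P$ while $b_jP=b_jQ'=0$ gives $b_j\in {\cal M}_P\,$, so $P\in U$.

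The main obstacle is pure bookkeeping: ensuring that the two parameter regimes of Theorems~\ref{MTdropd} and \ref{MKPZ} can be chained together, treating the edge case $\dim Q=0$ separately because Theorem~\ref{MTdropd} requires $\dim Q>0$, and checking that the ``values preserved'' conclusion of Theorem~\ref{MTdropd} together with the ``residues preserved'' conclusion of Theorem~\ref{MKPZ} is enough to keep the final place inside the arbitrary patch neighborhood $U$. Everything else is carried by the two theorems being cited.
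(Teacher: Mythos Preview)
Your proof is correct and follows exactly the approach indicated in the paper, which simply states ``Applying first Theorem~\ref{MTdropd} and then Theorem~\ref{MKPZ}, we obtain'' the result. You have supplied the details the paper omits, including the verification of the parameter constraints, the separate treatment of the case $\dim Q=0$, and the check that the patch neighborhood is preserved through both steps.
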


Taking $\Pzero=\mbox{id}_K$, $d_1=0$ and $r_1=1$, we obtain:
\begin{corollary}
The discrete zero-dimensional places with residue field a finite
extension of $K$ lie dense in $S(F|K)$.
\end{corollary}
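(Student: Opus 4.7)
The plan is to deduce this corollary as a direct specialization of Theorem~\ref{MTbetter}. I would set $\Pzero=\mbox{id}_K$, $d_1=0$, and $r_1=1$, so that $S(F|K\,;\,\Pzero)=S(F|K)$. Provided $\trdeg F|K\geq 1$, the hypotheses $d_1\geq 0$, $r_1\geq 1$ and $d_1+r_1\leq\trdeg F|K$ are met; the case $\trdeg F|K=0$ is vacuous since then $F=K$ and $S(F|K)$ consists only of the identity, which is already discrete and zero-dimensional with residue field $K$.

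Next I would read off that conditions (a) and (b) of Theorem~\ref{MTbetter} pin down exactly the claimed class of places. Condition (a) with $d_1=0$ and $K\Pzero=K$ says that $FP$ is a finitely generated extension of $K$ of transcendence degree zero, hence a finite extension; in particular $\dim P=0$. For condition (b), the triviality of $\Pzero$ gives $v_{\Pzero}K=\{0\}$ and a trivial divisible hull, so the prescribed $r_1$-extension is simply $\bigoplus_1\Z\cong\Z$; the condition then forces $v_PF$ to be a finitely generated subgroup of $\Z$ of rational rank $1$, i.e., $v_PF\cong\Z$, so $P$ is discrete. The density statement is exactly what Theorem~\ref{MTbetter} asserts for this choice of parameters.

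I do not expect any genuine obstacle at this step: the entire substance has already been absorbed into Theorems~\ref{MTdropd} and \ref{MKPZ}, upon which Theorem~\ref{MTbetter} is built, and the present corollary amounts to a clean bookkeeping check that the numerical conditions $d_1=0$ and $r_1=1$ translate to ``zero-dimensional'' and ``discrete'' respectively, with the residue field condition coming along for free because finite generation together with transcendence degree zero forces a finite algebraic extension.
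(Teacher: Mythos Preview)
Your proposal is correct and follows exactly the route the paper takes: immediately after stating Theorem~\ref{MTbetter}, the paper derives this corollary by the single sentence ``Taking $\Pzero=\mbox{id}_K$, $d_1=0$ and $r_1=1$, we obtain:''. Your additional unpacking of why conditions (a) and (b) specialize to ``finite extension of $K$'' and ``discrete'' is accurate and simply makes explicit what the paper leaves to the reader.
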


If we apply Theorem~\ref{MTbetter} with $d_1=0$ and $r_1=\trdeg F|K$,
where we take the $r_1$-extension $\bigoplus_{r_1}\Z\oplus v_{\Pzero}K$
to be lexicographically ordered, we obtain:
\begin{corollary}
The zero-dimensional places of maximal rank with residue field a finite
extension of $K\Pzero$ lie dense in $S(F|K\,;\,\Pzero)$.
\end{corollary}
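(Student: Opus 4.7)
The plan is a direct application of Theorem~\ref{MTbetter} with $d_1=0$, $r_1=n:=\trdeg F|K$, and the chosen $r_1$-extension taken to be $G:=\bigoplus_n\Z\oplus v_{\Pzero}K$ lexicographically ordered so that $v_{\Pzero}K$ is the smallest nonzero convex subgroup. The hypotheses $d_1\geq 0$, $r_1\geq 1$ and $d_1+r_1\leq n$ are satisfied, so inside an arbitrary patch-basic open neighborhood of any $Q\in S(F|K\,;\,\Pzero)$ the theorem produces a place $P$ with $\dim P=0$, $FP$ algebraic and finitely generated over $K\Pzero$ (hence a finite extension of $K\Pzero$), and $v_PF\subseteq G$ with $v_PF/v_{\Pzero}K$ finitely generated of rational rank $n$. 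It will remain only to confirm that $P$ is of maximal rank.

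Convexity of $v_{\Pzero}K$ in $G$ passes to the subgroup $v_PF$, so the induced ordering on the quotient $H:=v_PF/v_{\Pzero}K$ is well-defined and embeds $H$ as a finitely generated, rational-rank-$n$ ordered subgroup of the lexicographically ordered group $\bigoplus_n\Z$. For each $k=0,1,\ldots,n$, the canonical convex subgroup $C_k:=\{0\}^{n-k}\oplus\Z^k$ of $\bigoplus_n\Z$ yields a convex subgroup $H\cap C_k$ of $H$. The projection $H\to\Z^{n-k}$ onto the first $n-k$ coordinates has kernel $H\cap C_k$ and image of rational rank at most $n-k$, which forces $\mathrm{rr}(H\cap C_k)\geq k$; the reverse inequality is trivial since $C_k\cong\Z^k$. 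Hence $H\cap C_0,\ldots,H\cap C_n$ are $n+1$ pairwise distinct convex subgroups of $H$, giving $\mathrm{rk}\,H\geq n$, and combined with the general bound $\mathrm{rk}\leq\mathrm{rr}=n$ one gets $\mathrm{rk}(v_PF/v_{\Pzero}K)=n=\trdeg F|K$, which is exactly the maximal-rank condition for $P$.

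The only step beyond a black-box use of Theorem~\ref{MTbetter} is this rank computation, and it amounts to a standard observation about finitely generated subgroups of $\bigoplus_n\Z$ in the lexicographic ordering; everything else is simply reading off the conclusions of the theorem with the parameter choices above.
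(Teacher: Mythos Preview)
Your proposal is correct and follows precisely the paper's approach: apply Theorem~\ref{MTbetter} with $d_1=0$, $r_1=\trdeg F|K$, and the $r_1$-extension $\bigoplus_{r_1}\Z\oplus v_{\Pzero}K$ taken lexicographically ordered. The only difference is that you spell out in detail the verification that the resulting place is of maximal rank (convexity of $v_{\Pzero}K$ in $v_PF$ and the rank computation for $v_PF/v_{\Pzero}K$), whereas the paper leaves this implicit, having already indicated the analogous argument just before Corollary~\ref{maxrat}.
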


\parb
Corollary~\ref{discrat} and Corollary~\ref{maxrat} state
that if $K$ is perfect, then the discrete rational places and the
rational places of maximal rank lie dense in the space of all rational
places. We do not know whether Corollary~\ref{discrat} and
Corollary~\ref{maxrat} hold without the assumption that $K$ be
perfect. We need this assumption to deduce these results from
Theorem~\ref{MKPZ}. But if there is a rational place which admits a
strong form of local uniformization, then we can show the same result
without this assumption. We will say that a place $P$ of the function
field $F|K$ admits \bfind{smooth local uniformization} if there is a
model of $F|K$ on which $P$ is centered at a smooth point; in addition,
we require that if finitely many elements $a_1,\ldots,a_m\in {\cal O}_P$
are given, then the model can be chosen in such a way that they are
included in the coordinate ring. If $K$ is perfect, this is equivalent
to the usual notion of local uniformization where ``simple point'' is
used instead of ``smooth point''. In [K6] and [K--K] we show:
\begin{theorem}                             \label{slu}
Every rational discrete and every rational Abhyankar place admits smooth
local uniformization.
\end{theorem}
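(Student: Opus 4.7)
The plan is to treat the two asserted classes of places separately and, in each case, reduce the smooth local uniformization statement to a local uniformization theorem established in the cited companion papers. For a rational Abhyankar place $P$, the main result of [K--K] provides a model of $F|K$ on which $P$ is centered at a \emph{simple} point, under the hypothesis that $FP|K$ is separable. Since $P$ is rational, $FP=K$, and so this hypothesis is automatic. To upgrade ``simple'' to ``smooth'', I would invoke the standard criterion that on a variety over $K$ a simple point is smooth precisely when the residue field extension at that point is separable over $K$; for a rational place this residue field extension is trivial, hence separable. Finally, to incorporate the requirement that finitely many prescribed elements $a_1,\ldots,a_m\in{\cal O}_P$ lie in the coordinate ring, I would start from an affine model whose coordinate ring already contains the $a_i$ together with a set of generators of $F|K$, and observe that the blowup procedure of [K--K] only involves functions lying in ${\cal O}_P$, so the $a_i$ remain regular on the resulting smooth model.

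For a rational discrete place $P$, the approach depends on the transcendence degree $n$ of $F|K$. When $n=1$, the place is automatically Abhyankar and we are back in the previous case. When $n\geq 2$, the place is no longer Abhyankar (since $\dim P+\mbox{\rm rr}\,P=1<n$), and one must follow the argument of [K6]. The idea I would adopt is to pick a uniformizer $t\in F$ with $v_P t=1$, so that the restriction of $P$ to $K(t)$ is the $t$-adic place of a rational function field, trivially uniformized on $\mbox{\rm Spec}\,K[t]$. One then lifts this uniformization along the finite extension $F|K(t)$ by passing to the respective henselizations and exploiting that, for a rational discrete place, the relevant sub-extensions can be arranged to be defectless. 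As before, rationality ensures that the simple point produced is in fact smooth, and the $a_i$ can be built into the coordinate ring from the outset.

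The main obstacle is the non-Abhyankar sub-case just described: a rational discrete place in transcendence degree at least two. Here the Abhyankar-type dimension count underlying [K--K] breaks down, and one must control potential defect in the extension $F|K(t)$, which can genuinely occur in positive characteristic. The strategy of [K6], which I would follow, uses the model theory of tame fields developed in [K1] (and further refined in [K2], [K7]) to show that after a suitable choice of $t$ and a finite sequence of blowups, the extension of henselizations becomes defectless, so that classical monomialization arguments produce the desired smooth model on which the $a_i$ lie in the coordinate ring.
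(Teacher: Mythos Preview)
The paper does not prove Theorem~\ref{slu}; it is stated as a result imported from the companion works [K6] and [K--K] (introduced by the sentence ``In [K6] and [K--K] we show:''), with no argument given in the present text. Your proposal correctly identifies these same two references as the source and then goes further, sketching a plausible outline of how each proof might run.

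For the rational Abhyankar case your reduction is sound and matches what the introduction says about the main theorem of [K--K]: the hypothesis that $FP|K$ be separable is automatic when $FP=K$, and a regular $K$-rational point is indeed smooth. For the non-Abhyankar rational discrete case (transcendence degree $\geq 2$) your outline is reasonable speculation about the contents of [K6], but since the present paper gives no detail beyond the bare citation, there is nothing here to compare it against; in particular, whether [K6] proceeds via a uniformizer over $K(t)$, defectlessness, and the model theory of tame fields in the way you describe cannot be checked from this paper alone. In short, your proposal agrees with the paper at the only level the paper itself offers---the choice of references---and your additional content is a sketch of external work that the paper does not reproduce.
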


The following density result will be proved in Section~\ref{sectdrplu}:
\begin{theorem}                             \label{densrplu}
The rational discrete places and the rational places of maximal rank lie
dense in the space of all rational places of $F|K$ which admit smooth
local uniformization.
\end{theorem}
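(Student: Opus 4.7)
The strategy is to exploit the smooth local uniformization hypothesis to reduce to an explicit construction at the completion of the local ring at a smooth $K$-rational center, handling the maximal rank and discrete cases in parallel.

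Let $Q$ be a rational place of $F|K$ admitting smooth local uniformization and let $U=\{P \mid a_1,\ldots,a_k\in{\cal O}_P,\ b_1,\ldots,b_\ell\in{\cal M}_P\}$ be a basic patch-open neighborhood of $Q$. Applying smooth local uniformization to the finite data $a_1,\ldots,a_k,b_1,\ldots,b_\ell$, I pick a model of $F|K$ on which $Q$ is centered at a smooth point $p$ whose local ring $R$ contains all $a_i$ and $b_j$. Since $Q$ is rational, the composition $K\hookrightarrow R\to R/m_R\hookrightarrow FQ=K$ is the identity, so $R/m_R=K$ and $R$ is a regular local $K$-algebra of dimension $n=\trdeg F|K$ with residue field $K$. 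Fix a regular system of parameters $t_1,\ldots,t_n\in m_R$. By the Cohen structure theorem $\hat R\cong K[[t_1,\ldots,t_n]]$, the completion map $R\hookrightarrow\hat R$ is injective, and hence $F=\mbox{\rm Frac}(R)\hookrightarrow \mbox{\rm Frac}(\hat R)$; note that $a_i\in R\subseteq\hat R$ and $b_j\in m_R\subseteq m_{\hat R}=(t_1,\ldots,t_n)\hat R$.

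For a rational place of maximal rank, I would use the iterated Laurent embedding $\hat R = K[[t_1,\ldots,t_n]]\hookrightarrow K((t_1))((t_2))\cdots((t_n))$ built by iterating $S[[t]]\subseteq S((t))$. The iterated Laurent field carries the natural place with value group $\Z^n$ under the lexicographic ordering (rank $n$) and residue field $K$; restricting along $F\hookrightarrow K((t_1))\cdots((t_n))$ yields a place $P$ of $F|K$ whose residue field is sandwiched between $K$ and $K$ (hence equals $K$) and whose value group contains the values of $t_1,\ldots,t_n$ (which form the standard basis of $\Z^n$), so $v_PF=\Z^n$ and $P$ has maximal rank $n$. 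The condition $P\in U$ is immediate from $a_i\in\hat R$ and $b_j\in m_{\hat R}$. For a rational discrete place, I would instead produce a prime ideal $I\subset\hat R$ of height $n-1$ such that $\hat R/I$ is a discrete valuation ring and $I\cap R=(0)$. Granted such an $I$, the injection $R\hookrightarrow\hat R/I$ extends to $F\hookrightarrow\mbox{\rm Frac}(\hat R/I)\cong K((s))$ (with $s$ a uniformizer), and the $s$-adic place of $K((s))$ restricts to a place $P$ of $F|K$ whose value group is a non-zero subgroup of $\Z$ (non-zero because $v_P(t_1)>0$), hence equal to $\Z$, and whose residue field lies between $K$ and $K$, hence equals $K$. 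Again $a_i\mapsto K[[s]]$ and $b_j\mapsto sK[[s]]$ give $P\in U$. Both constructed places belong to the ambient subspace, by Theorem~\ref{slu}.

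The main obstacle is producing the prime $I$ when $n\geq 2$ (for $n=1$, $R=\hat R$ is already a DVR and $I=(0)$ works). One route uses excellence: since $R$ is essentially of finite type over $K$ and regular, it is excellent, so the generic fiber $\hat R\otimes_R F$ is a Noetherian regular ring of Krull dimension $n-1$, and any height-$(n-1)$ prime of this generic fiber pulls back to a prime of $\hat R$ satisfying all three requirements. An alternative, explicit route chooses $\phi_1,\ldots,\phi_n\in sK[[s]]$ algebraically independent over $K$ (possible because $K((s))$ has infinite transcendence degree over $K$) and takes $I$ to be the kernel of the continuous $K$-algebra homomorphism $\hat R\to K[[s]]$ sending $t_i\mapsto\phi_i$; verifying $I\cap R=(0)$ is the main technical step. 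In either approach, smoothness of $Q$ is essential: it delivers the regular parameters and the power series form of $\hat R$ on which the whole argument rests.
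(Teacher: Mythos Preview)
Your approach via completion and the Cohen structure theorem is genuinely different from the paper's, which instead reads off the Jacobian criterion at the smooth center and then embeds $F$, by the multidimensional Hensel's Lemma, into the henselization of a rational function field $K(z_1,\ldots,z_n)$ equipped with an arbitrary $K$-rational place $P'$ (discrete or of maximal rank, chosen at will). The paper's construction is thus uniform---one embedding, then pick the place---whereas you treat the two cases by separate embeddings. Your maximal-rank argument via iterated Laurent series is correct and pleasantly direct; both you and the paper ultimately rely on the infinite transcendence degree of $K((t))$ over $K(t)$ for the discrete case.

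Your first route in the discrete case, however, contains a real error: regularity of the generic formal fibre only tells you that $\hat R_I$ is regular for primes $I$ with $I\cap R=(0)$---which is automatic anyway, since $\hat R$ itself is regular---and says nothing about $\hat R/I$. Concretely, for $R=K[x,y]_{(x,y)}$ and $g\in K[[x]]$ transcendental over $K(x)$, the prime $I=(y^2-x^3-x^4g(x))\subset K[[x,y]]$ lies in the generic fibre and has height~$1$, yet $\hat R/I$ has embedding dimension~$2$ and is not a DVR. Your second route is the right one; just pin down the choice by taking $\phi_1=s$ (and $\phi_2,\ldots,\phi_n\in sK[[s]]$ algebraically independent over $K(s)$), so that the map $\hat R\to K[[s]]$ is surjective and $\hat R/I\cong K[[s]]$ on the nose. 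The step you flag, $I\cap R=(0)$, is then short: a nonzero $r\in R$ satisfies a minimal equation $\sum_j c_j r^j=0$ over $K(t_1,\ldots,t_n)$ with $c_0\neq 0$; since $t_i\mapsto\phi_i$ embeds $K(t_1,\ldots,t_n)$ into $K((s))$, the image of $c_0$ is nonzero, and hence the image of $r$ cannot vanish.
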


%
%
\subsection{Large fields}                   \label{sectlf}
Following F.~Pop [POP1,2], a field $K$ is called a {\bf large field} if
it satisfies one of the following equivalent conditions:
\sn
{\bf (LF)} \ {\it For every smooth curve over $K$ the set of rational
points is infinite if it is non-empty.}
\sn
{\bf (LF$'$)} \ {\it In every smooth, integral variety over $K$ the set
of rational points is Zariski-dense if it is non-empty.}
\sn
{\bf (LF$''$)} \ {\it For every function field $F|K$ in one variable
the set of rational places is infinite if it is non-empty.}
\sn
For the equivalence of (LF) and (LF$'$), note that the set of all smooth
$K$-curves through a given smooth $K$-rational point of an integral
$K$-variety $X$ is Zariski-dense in $X$. If (LF) holds, then the set of
$K$-rational points of any such curve is Zariski-dense in the curve,
which implies that the set of $K$-rational points of $X$ is
Zariski-dense in $X$. The equivalence of (LF) and (LF$''$) follows from
two well-known facts: a) every function field in one variable is the
function field of a smooth curve (cf.\ [HA], Chap.~I, Theorem 6.9),
and b) every $K$-rational point of a smooth curve gives rise to a
$K$-rational place. The latter is a special case of a much more general
result which we will need later:
\begin{theorem}                             \label{exratpl}
Assume that the affine irreducible variety $V$ defined over $K$ has a
simple $K$-rational point. Then its function field admits a rational
place of maximal rank, centered at this point.
\end{theorem}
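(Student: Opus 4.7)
The plan is to build the place explicitly from a regular system of parameters at $x$ and the standard lexicographic valuation on an iterated Laurent series field. Set $n=\trdeg F|K$, which equals the dimension of $V$ at the simple point $x$. Since $x$ is a simple $K$-rational point, the local ring $\mathcal{O}_{V,x}$ is a regular local ring of dimension $n$, with residue field $K$ and containing $K$. Pick a regular system of parameters $t_1,\ldots,t_n\in\mathcal{M}_x$. By Cohen's structure theorem, the completion embeds canonically as $\widehat{\mathcal{O}}_{V,x}\cong K[[t_1,\ldots,t_n]]$, and the natural map $\mathcal{O}_{V,x}\to\widehat{\mathcal{O}}_{V,x}$ is injective by Krull's intersection theorem. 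In particular $t_1,\ldots,t_n$ are algebraically independent over $K$ (their images in the formal power series ring clearly are), and passing to fraction fields gives an embedding $F\hookrightarrow\mathrm{Frac}(K[[t_1,\ldots,t_n]])$.

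Next I would bring in the iterated Laurent series field
\[L\>:=\>K((t_1))((t_2))\cdots((t_n))\,.\]
Since $K[[t_1,\ldots,t_n]]=K[[t_1,\ldots,t_{n-1}]][[t_n]]$, an easy induction on $n$ shows $K[[t_1,\ldots,t_n]]\subseteq L$, so we obtain an embedding $F\hookrightarrow L$. On $L$ there is a natural place $P_L$ of rank $n$, namely the composition of the $t_i$-adic places taken in order $t_n,t_{n-1},\ldots,t_1$; concretely, $P_L$ is trivial on $K$, has value group $\bigoplus_{i=1}^n\Z e_i$ with the lexicographic order, satisfies $v_{P_L}(t_i)=e_i$, and has residue field exactly $K$ (this residue field computation is an easy induction on $n$, stripping off one Laurent variable at a time).

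Finally I would let $P$ be the restriction of $P_L$ to the embedded copy of $F$. Since $P_L$ is trivial on $K$, so is $P$, so $P\in S(F|K)$. The residue field $FP$ sits between $K$ and $LP_L=K$, hence $FP=K$, i.e.\ $P$ is rational. The value group $v_PF$ is a subgroup of $\bigoplus_{i=1}^n\Z e_i$ with the induced lexicographic order and contains the elements $v_P(t_i)=e_i$ lying in $n$ distinct convex subgroups; therefore $v_PF$ has rank $n=\trdeg F|K$, so $P$ is a place of maximal rank. To see that $P$ is centered at $x$, take $f\in\mathcal{O}_{V,x}$: viewed in $K[[t_1,\ldots,t_n]]\subseteq L$, the element $f$ has nonnegative value under $P_L$, so $\mathcal{O}_{V,x}\subseteq\mathcal{O}_P$; if in addition $f\in\mathcal{M}_x$ then its image in $K[[t_1,\ldots,t_n]]$ has vanishing constant term, so $v_P(f)>0$, giving $\mathcal{M}_x\subseteq\mathcal{M}_P\cap\mathcal{O}_{V,x}$, which is the desired domination.

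The substantive inputs here are Cohen's structure theorem (to identify the completion) and the explicit structure of the lexicographic place on $L$; no step looks difficult, so there is no real obstacle. The only point that requires a moment's care is verifying that $K[[t_1,\ldots,t_n]]$ actually embeds into the iterated Laurent field $L$ (rather than into a larger completed tensor object), but this is handled by the iterated decomposition $K[[t_1,\ldots,t_n]]=K[[t_1,\ldots,t_{n-1}]][[t_n]]$ noted above.
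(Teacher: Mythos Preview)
Your argument is correct. The paper does not give its own proof of this theorem but simply cites Abhyankar [A] and the appendix of Jarden--Roquette [J--R]; your explicit construction via a regular system of parameters and the embedding $F\hookrightarrow K((t_1))\cdots((t_n))$ is the standard route, and in fact the paper uses exactly this iterated-Laurent-series technique elsewhere (see the proof of Theorem~\ref{exrp}, where $F$ is embedded in $K((X_1))\ldots((X_d))$ to produce a rational place of maximal rank). One cosmetic point: your rank argument can be tightened by observing that $v_PF$ contains $e_1,\ldots,e_n$ and hence equals all of $\Z^n$ (lex), rather than appealing to ``$n$ distinct convex subgroups''.
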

This follows from results in [A] (see appendix A of [J--R]).

\parm
A field $K$ is existentially closed in an extension field $L$ if for
every $m,n\in\N$ and every choice of polynomials $f_1,\ldots, f_n,g\in
K[X_1,\ldots,X_m]$, whenever $f_1,\ldots,f_n$ have a common zero in
$L^m$ which is not a zero of $g$, they also have a common zero in $K^m$
which is not a zero of $g$. In Section~\ref{sectlfp}, we shall prove:

\begin{theorem}                             \label{lfecKt}
The following conditions are equivalent:
\sn
1) \ $K$ is a large field,
\sn
2) \ $K$ is existentially closed in every function field $F$ in one
variable over $K$ which admits a $K$-rational place,
\sn
3) \ $K$ is existentially closed in the henselization $K(t)^h$ of the
rational function field $K(t)$ with respect to the $t$-adic valuation,
\sn
4) \ $K$ is existentially closed in the field $K((t))$ of formal Laurent
series,
\sn
5) \ $K$ is existentially closed in every extension field which admits a
discrete $K$-rational place.
\end{theorem}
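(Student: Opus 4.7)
The plan is to close the cycle $1) \Rightarrow 5) \Rightarrow 4) \Rightarrow 3) \Rightarrow 1)$ and to handle $1) \Leftrightarrow 2)$ in parallel. The cheap steps come first: the inclusions $K \subseteq K(t)^h \subseteq K((t))$, combined with the fact that $K((t))$ carries the $t$-adic discrete $K$-rational place, give $5) \Rightarrow 4) \Rightarrow 3)$ immediately, since an existential formula with parameters in $K$ that is satisfied in a smaller field is a fortiori satisfied in any larger field containing it.

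For $3) \Rightarrow 1)$ I would take a smooth $K$-curve $C$ with a $K$-rational point $p$ and a finite list $p_1,\ldots,p_r \in C(K)$, and produce a further $K$-rational point on $C$. Theorem~\ref{exratpl} supplies a $K$-rational place $Q$ of $K(C)$ centered at $p$, which in this one-dimensional setting is automatically discrete. Taking for $t$ a local parameter at $p$, the $Q$-adic completion of $K(C)$ is $K((t))$, and since $K(C)$ is finite over $K(t)$ this yields $K(C) \hookrightarrow K(t)^h$. The generic point of $C$ is then a $K(t)^h$-rational point of $C$ distinct from each $p_i$, so the existential statement ``$C$ has a point different from each of $p_1,\ldots,p_r$'' (a disjunction of systems of polynomial equalities and inequalities with parameters in $K$) holds in $K(t)^h$. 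By 3) it also holds in $K$, and $C(K)$ is infinite.

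The same generic-point idea gives $2) \Rightarrow 1)$: the function field of $C$ is itself one of the function fields in one variable admitting a $K$-rational place covered by 2), so the existential statement descends directly to $K$. For $1) \Rightarrow 2)$, given an existential system $f_i(\bar a) = 0 \neq g(\bar a)$ solved by $\bar a \in F^m$, I would set $F' := K(\bar a) \subseteq F$ and restrict the given $K$-rational place to $F'$. If the restriction is trivial then $F' = K$ and $\bar a \in K^m$ already; otherwise $F'|K$ is a one-variable function field inheriting a $K$-rational place, to which (LF$''$) applies. All but finitely many of the infinitely many $K$-rational places thus produced satisfy $a_i \in {\cal O}_{P'}$ and $g(\bar a)^{-1} \in {\cal O}_{P'}$, and specializing through any such $P'$ delivers $\bar a' := \bar a P' \in K^m$ with $f_i(\bar a') = 0 \neq g(\bar a')$.

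The substantive step is $1) \Rightarrow 5)$, where the smooth local uniformization machinery of the paper is indispensable. Let $L|K$ carry a discrete $K$-rational place $P$, let $\bar a \in L^m$ satisfy $f_i(\bar a) = 0 \neq g(\bar a)$, set $F := K(\bar a) \subseteq L$ and let $Q := P|_F$: its value group is a subgroup of $\Z$ and its residue is $K$, so if $Q$ is trivial then $F = K$ and $\bar a \in K^m$ already, while otherwise $Q$ is a discrete $K$-rational place. Let $W$ be the Zariski closure of $\{\bar a\}$ in affine $m$-space over $K$; it is an integral $K$-subvariety of $V(f_1,\ldots,f_n)$ with function field $K(W) = F$, and $Q$ is a discrete $K$-rational place of $K(W)|K$. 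By Theorem~\ref{slu} this place admits smooth local uniformization, so there is a $K$-birational model $\tilde W$ of $W$ on which it is centered at a smooth $K$-rational point; applying (LF$'$) to the smooth locus of $\tilde W$ then forces $\tilde W(K)$, and by transport across the birational equivalence also $W(K)$, to be Zariski-dense in $W$. Since the open subset $W \setminus V(g)$ is non-empty (it contains the generic point $\bar a$), density produces a $K$-rational point in it, which is the $\bar a' \in K^m$ sought. The main obstacle is exactly what Theorem~\ref{slu} is designed to circumvent: without smooth local uniformization for discrete $K$-rational places in arbitrary characteristic and for possibly imperfect $K$, no smooth $K$-rational point on a birational model of $W$ would be available, and the application of (LF$'$) would collapse.
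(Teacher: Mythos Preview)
Your proof is correct, but it organizes the equivalences quite differently from the paper, and trades one external black box for another.

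The paper runs the cycle $1)\Leftrightarrow 2)\Rightarrow 3)\Rightarrow 4)\Rightarrow 5)\Rightarrow 2)$. Its only substantial step is $3)\Rightarrow 4)$, for which it invokes the theorem (due to Ershov, reproved in [K2]) that $K(t)^h$ is existentially closed in $K((t))$; once this is in hand, $4)\Rightarrow 5)$ is the trivial observation that any $L$ with a discrete $K$-rational place embeds over $K(t)$ into $K((t))$. You reverse the direction of the middle implications, so that $5)\Rightarrow 4)\Rightarrow 3)$ become trivial and the Ershov-type result is never needed. The price is that your $1)\Rightarrow 5)$ is no longer cheap: you invoke Theorem~\ref{slu} (smooth local uniformization for discrete rational places in arbitrary characteristic) together with (LF$'$). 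This is legitimate --- Theorem~\ref{slu} is established independently of Theorem~\ref{lfecKt} --- but it is a considerably heavier input than Ershov's theorem, and your argument here is essentially the proof the paper later gives for Theorem~\ref{kecF1} and Corollary~\ref{lrdraec}, which the paper derives \emph{from} Theorem~\ref{lfecKt} rather than the other way around. So your route is valid but inverts the paper's logical dependencies.

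Two small remarks on your $3)\Rightarrow 1)$: the claim that $K(C)\hookrightarrow K(t)^h$ ``since $K(C)$ is finite over $K(t)$'' deserves one more sentence. What you need is that the relative algebraic closure of $K(t)$ in $K((t))$ is exactly $K(t)^h$; this holds because henselian discretely valued fields are defectless, so any finite extension of $K(t)^h$ inside $K((t))$ has $e=f=1$ and is therefore trivial. Alternatively, since $t$ is a local parameter at a smooth point, $dt$ generates $\Omega^1_{K(C)/K}$ locally and hence $t$ is a separating element, so $K(C)/K(t)$ is separable and the embedding into $K(t)^h$ follows directly. Either way the step goes through, but as written the justification is thin.
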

\n
The canonical $t$-adic place of the fields $K(t)^h$ and $K((t))$ is
discrete, and it is trivial on $K$ and $K$-rational. Therefore, 5)
implies 3) and 4).

\pars
In [L], Serge Lang proved that every field $K$ complete under a rank one
valuation is large. But this already follows from the fact that such a
field is henselian. Indeed, if a field $K$ admits a non-trivial
henselian valuation, then the
Implicit Function Theorem holds in~$K$ (cf.\ [P--Z]). Using this fact,
it is easy to show that $K$ satisfies (LF). On the other hand, it is
also easy to prove that $K$ satisfies condition 3) of the foregoing
theorem, and we will give the proof in Section~\ref{sectlfp} to
demonstrate the arguments that are typical for this model theoretic
approach. We note:
\begin{proposition}                         \label{hvilarge}
If a field $K$ admits a non-trivial henselian valuation, then it is
large.
\end{proposition}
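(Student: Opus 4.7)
The plan is to verify condition~3) of Theorem~\ref{lfecKt}, i.e.\ that $K$ is existentially closed in $K(t)^h$; the proposition then follows from the equivalence 1)$\Leftrightarrow$3). Concretely, given $f_1,\ldots,f_n,g\in K[X_1,\ldots,X_m]$ and $\bar a=(a_1,\ldots,a_m)\in(K(t)^h)^m$ with $f_j(\bar a)=0$ for all $j$ and $g(\bar a)\neq 0$, I must produce $\bar b\in K^m$ enjoying the same properties.

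The strategy is to specialize $t\mapsto\pi$ for a carefully chosen $\pi\in K$ with $v(\pi)>0$, exploiting henselianity of $(K,v)$ to perform the lift. Since $K(t)^h|K(t)$ is separable-algebraic and immediate, $L:=K(t,a_1,\ldots,a_m)\subseteq K(t)^h$ is finite separable over $K(t)$. Using the description of $K(t)^h$ as a colimit of simple \'etale extensions of $K[t]_{(t)}$, I would choose a primitive element $\alpha$ for $L|K(t)$ whose monic minimal polynomial $Q(t,X)\in K[t]_{(t)}[X]$ has residue $\alpha_0:=\alpha w^h\in K$ as a simple root of $Q(0,X)\in K[X]$. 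Writing $a_i=Q_i(t,\alpha)$ and $g(\bar a)=G(t,\alpha)$ with $Q_i,G\in K(t)[X]$ of $X$-degree below $\deg_X Q$, the plan is to pick $\pi\in K$ with $v(\pi)$ so large that (i)~the $K[t]$-denominators appearing in $Q,Q_i,G$ (all with non-vanishing constant term) evaluate to units in ${\cal O}_v$, and (ii)~Hensel's Lemma in $(K,v)$, applied to $Q(\pi,X)$ at the approximate root $\alpha_0$, yields a genuine root $\beta\in{\cal O}_v$. Setting $b_i:=Q_i(\pi,\beta)\in K$ and specializing the polynomial congruences $f_j(Q_1(t,X),\ldots,Q_m(t,X))\equiv 0\pmod{Q(t,X)}$ in $K(t)[X]$ (which hold because $f_j(\bar a)=0$ in $L$) at $(t,X)=(\pi,\beta)$ forces $f_j(\bar b)=0$.

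The main obstacle is ensuring $g(\bar b)=G(\pi,\beta)\neq 0$; for a bad choice of $\pi$ this could vanish. I would dispose of it by a resultant argument: since $Q$ is irreducible over $K(t)$ and $G\neq 0$ has $X$-degree below $\deg_X Q$, the polynomials $Q$ and $G$ are coprime in $K(t)[X]$, so their $X$-resultant is a non-zero element of $K(t)$ and therefore vanishes at only finitely many $\pi\in K$. Since $\{\pi\in K:v(\pi)>N\}$ is infinite for every $N\in v K$ (by non-triviality of $v$), a suitable $\pi$ meeting all of the Hensel and denominator constraints above and simultaneously lying outside this exceptional finite set can be chosen; for any such $\pi$, $P(\pi,X)$ and $G(\pi,X)$ share no common root, so $G(\pi,\beta)\neq 0$, completing the verification of condition~3).
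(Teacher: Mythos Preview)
Your argument is sound, with one technical caveat: the coefficients of $Q(\pi,X)$ need not lie in ${\cal O}_v$, since the coefficients of $Q(0,X)$ are arbitrary elements of $K$ and taking $v(\pi)$ large only forces the coefficients of $Q(\pi,X)$ to be $v$-adically \emph{close} to these. So the textbook form of Hensel's Lemma does not literally apply; what you need is the Implicit Function Theorem for henselian fields ([P--Z], Theorem~7.4, also invoked in the proof of Theorem~\ref{MKP}), applied to $Q$ viewed as a polynomial in $(T,X)$ at the point $(0,\alpha_0)$. With this adjustment, and after absorbing into the finite exceptional set the poles of the auxiliary polynomials $H_j$ witnessing $f_j(Q_1,\ldots,Q_m)\equiv 0\pmod{Q}$, everything goes through.

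The paper's proof is quite different and bypasses all the bookkeeping with primitive elements, specialization and resultants. It takes a $|K|^+$-saturated elementary extension $(K^*,v^*)$ of $(K,v)$ and equips $K(t)^h$ with the \emph{composite} valuation $w=v_t\circ v$; since $v$ is already henselian on the residue field $K$, the $v_t$-henselization of $K(t)$ coincides with its $w$-henselization. Saturation supplies $t^*\in K^*$ with $v^*t^*$ larger than every element of $vK$, and $t\mapsto t^*$ then gives (via Lemma~\ref{prelBour}) a value-preserving embedding of $(K(t),w)$ into $(K^*,v^*)$, which extends to $K(t)^h$ by the universal property of henselizations. Since $K\prec K^*$, condition~3) follows at once. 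This is shorter and more conceptual, at the price of invoking saturated models; your argument, by contrast, stays entirely inside $K$, uses nothing beyond the Implicit Function Theorem, and makes the specialization $t\mapsto\pi$ completely explicit --- at the cost of tracking several finite exceptional sets.
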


\pars
In view of condition 5) of the above theorem, the question arises
whether the existence of a $K$-rational place of an extension field $L$
of a large field $K$ always implies that $K$ is existentially closed in
$L$. We will discuss this in the next section.

%
%
\subsection{Rational place $=$ existentially closed?} \label{sectrp=ec}
Condition 4) of Theorem~\ref{lfecKt} leads us to ask whether large
fields satisfy conditions which may appear to be even stronger. In fact,
we shall prove in Section~\ref{sectlfp}:
\begin{theorem}                           \label{pcond}
Let $K$ be a perfect field. Then the following conditions are equivalent:
\sn
1) \ $K$ is a large field,
\sn
2) \ $K$ is existentially closed in every power series field $K((G))$.
\sn
3) \ $K$ is existentially closed in every extension field $L$ which
admits a $K$-rational place.
\end{theorem}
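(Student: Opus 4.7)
My plan is to dispatch $3)\Rightarrow 2)$ and $2)\Rightarrow 1)$ first, then concentrate on $1)\Rightarrow 3)$. For $3)\Rightarrow 2)$: every power series field $K((G))$ carries its canonical place, whose residue field is $K$, hence a $K$-rational place, so 3) applies directly. For $2)\Rightarrow 1)$: specializing to $G=\Z$ gives $K((G))=K((t))$, and the equivalence $1)\iff 4)$ in Theorem~\ref{lfecKt} then yields largeness.

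For $1)\Rightarrow 3)$, I assume $K$ is large and perfect and take $\bar a=(a_1,\ldots,a_m)\in L^m$ with $f_i(\bar a)=0$ for $1\leq i\leq n$ and $g(\bar a)\neq 0$, where $L|K$ has a $K$-rational place $P$. Setting $F=K(\bar a)$ and $Q=P|_F$, a standard valuation-theoretic argument (the residue field $FQ$ embeds into $LP=K$ as a $K$-algebra and contains $K$) shows that $Q$ is a $K$-rational place of $F|K$. If $F=K$ the claim is trivial, and the case $F|K$ algebraic but non-trivial is excluded because a valuation on an algebraic extension that is trivial on the base is trivial. So I may assume $F|K$ is a function field of positive transcendence degree.

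The key reformulation is geometric: consider the integral affine $K$-variety
\[
V \;=\; \mbox{\rm Spec}\, K[a_1,\ldots,a_m,\,g(\bar a)^{-1}]\,,
\]
whose function field is $F$. Any $K$-rational point of $V$ provides a suitable $\bar b$ by reading off the images of the $a_i$ under the corresponding $K$-algebra map to $K$, so it suffices to show $V(K)\neq\emptyset$. To this end I combine density with largeness. Since $K$ is perfect and $Q$ shows the space of rational places of $F|K$ is non-empty, Corollary~\ref{discrat} supplies a discrete rational place $R$ of $F|K$. By Theorem~\ref{slu}, $R$ admits smooth local uniformization, yielding a model $V'$ of $F|K$ on which $R$ is centered at a smooth point; since $R$ is $K$-rational, the residues of the coordinate functions of $V'$ at $R$ lie in $K$, so this smooth center is a $K$-rational point of $V'$.

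Now condition (LF$'$) applies to the smooth integral $K$-variety $V'$, making $V'(K)$ Zariski-dense in $V'$. Because $V$ and $V'$ share the function field $F$, they contain isomorphic non-empty Zariski-open subvarieties $U\subseteq V$ and $U'\subseteq V'$; density forces $U'(K)\neq\emptyset$, and hence $V(K)\supseteq U(K)\neq\emptyset$, as required. The main conceptual obstacle I foresee is that $Q$ itself need not be suited to the equations (some $a_i$ may lie outside $\mathcal{O}_Q$, or $g(\bar a)$ may fall inside $\mathcal{M}_Q$), so one cannot simply take residues under $Q$; the argument sidesteps this by producing a different rational place $R$ whose smooth model, thanks to largeness, is saturated with $K$-rational points that can be transported back to $V$.
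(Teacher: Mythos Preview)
Your proof is correct. The implications $3)\Rightarrow 2)$ and $2)\Rightarrow 1)$ match the paper exactly. For $1)\Rightarrow 3)$, however, you take a genuinely different route.

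The paper argues $1)\Rightarrow 3)$ entirely within valuation theory: it enlarges $(L,P)$ through the perfect hull and a maximal immediate algebraic extension to a tame field, then further to a tame field $(L_2,P)$ with divisible value group and residue field $K$; it takes $L_0$ to be the relative algebraic closure of some $K(x)$ in $L_2$, uses Lemma~\ref{trac} and the Ax--Kochen--Ershov principle (Theorem~\ref{tameAKE}) to get $L_0$ existentially closed in $L_2$, and finally notes that every function subfield of $L_0$ over $K$ carries a discrete $K$-rational place (being algebraic over $K(x)$), so Theorem~\ref{lfecKt} applies.

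Your argument instead hides the tame-field machinery inside Corollary~\ref{discrat} (which ultimately rests on Theorem~\ref{tameAKE} via Theorem~\ref{MKPZ}) and then proceeds geometrically through Theorem~\ref{slu} and condition (LF$'$). This is legitimate and is in fact close to the strategy the paper outlines in the paragraph preceding Theorem~\ref{kecF1}. What your approach buys is a shorter argument at the cost of invoking heavier intermediate results; what the paper's direct proof buys is that it unwinds the dependence on density and local uniformization, showing exactly where perfectness and Theorem~\ref{tameAKE} enter. Two small remarks: once Corollary~\ref{discrat} hands you the discrete rational place $R$ on $F$, condition~5) of Theorem~\ref{lfecKt} already yields $K$ existentially closed in $F$ immediately, so the passage through smooth local uniformization, (LF$'$), and birational transport, while correct, is unnecessary; and strictly speaking (LF$'$) asks for a smooth variety, so you should pass to the smooth locus of $V'$ before invoking it.
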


In particular, we obtain:
\begin{theorem}                             \label{kecF}
Let $K$ be a perfect field which admits a henselian valuation. Assume
that the extension field $L$ of $K$ admits a $K$-rational
place. Then $K$ is existentially closed in~$L$.
\end{theorem}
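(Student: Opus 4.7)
The plan is to obtain Theorem~\ref{kecF} as an immediate consequence of Proposition~\ref{hvilarge} and Theorem~\ref{pcond}; essentially no new work is needed, one just has to chain the two results.

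First, I would invoke Proposition~\ref{hvilarge}: since $K$ admits a (non-trivial) henselian valuation, $K$ is a large field. Second, since $K$ is perfect by hypothesis, Theorem~\ref{pcond} is available, and in particular the implication $1) \Rightarrow 3)$ of that theorem says that a perfect large field $K$ is existentially closed in every extension field which admits a $K$-rational place. Applying this to the given extension $L$, which by hypothesis carries a $K$-rational place, delivers the conclusion that $K$ is existentially closed in $L$.

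There is really no main obstacle here; the substance of the argument lies entirely inside the earlier proofs, namely in the proof of Proposition~\ref{hvilarge} (going from henselianity to largeness via the Implicit Function Theorem and condition (LF)) and in the proof of the non-trivial implication $1)\Rightarrow 3)$ of Theorem~\ref{pcond} (which is where the density results for the Zariski space, combined with the model theory of tame fields, are brought to bear). The only point worth flagging in the write-up is that the hypothesis "henselian valuation" must be read as non-trivial, since otherwise every field would satisfy the hypothesis and the conclusion would be false (e.g., $K=\Q$ with $L=\Q((t))$ and the $t$-adic place). Apart from this clarification, the proof is a one-line citation of the two preceding results.
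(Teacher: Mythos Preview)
Your proposal is correct and is exactly the paper's own derivation: Theorem~\ref{kecF} is stated immediately after Theorem~\ref{pcond} with the words ``In particular, we obtain'', i.e., it is obtained by combining Proposition~\ref{hvilarge} (henselian $\Rightarrow$ large) with the implication $1)\Rightarrow 3)$ of Theorem~\ref{pcond}. Your remark that the henselian valuation must be non-trivial is also in line with the paper's explicit hypothesis in Proposition~\ref{hvilarge}.
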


For a general field $K$, we do not know whether conditions 1), 2) and 3)
of this theorem are equivalent. Note that in general, not every field
$L$ as in 3) is embeddable in a power series field $K((G))$ (as we are
not admitting non-trivial factor systems here; cf.\ [KA]). Hence it is
an interesting question whether 2) and 3) are {\it always} equivalent.

A field $K$ is existentially closed in an extension field $L$ if it
is existentially closed in every finitely generated subextension $F$
in $L$. If $L$ admits a $K$-rational place $P$, then every such function
field $F$ admits a $K$-rational place, namely, the restriction of $P$.
Hence, condition 3) of the foregoing theorem is equivalent to the
following condition on $K$:
\sn
{\bf (RP=EC)} \ {\it If an algebraic function field $F|K$ admits a
rational place, then $K$ is existentially closed in $F$.}
\sn
Note that in contrast to condition 2) of Theorem~\ref{lfecKt}, we are
not restricting our condition to function fields in one variable here.

By Theorem~\ref{lfecKt}, every field $K$ which satisfies (RP=EC)
is large. Let us see what we can say about the converse. Take a
function field $F|K$ with a rational place $P$ which admits local
uniformization. That is, $F|K$ admits a model on which $P$ is centered
at a simple $K$-rational point. By Theorem~\ref{exratpl}, $F$ also
admits a $K$-rational place $Q$ of maximal rank. By Theorem~\ref{slu},
$Q$ admits smooth local uniformization. Hence by Theorem~\ref{densrplu},
$F|K$ also admits a rational discrete place. If $K$ is large, then it
follows from Theorem~\ref{lfecKt} that $K$ is existentially closed in
$F$. This proves the following well known result:
\begin{theorem}                             \label{kecF1}
Let $K$ be a large field and $F|K$ an algebraic function field. If there
is a rational place of $F|K$ which admits local uniformization, then $K$
is existentially closed in~$F$.
\end{theorem}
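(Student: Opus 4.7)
The plan is to chain together the results already assembled in the introduction, essentially following the discussion that immediately precedes the theorem. Starting from the hypothesis, we have a rational place $P$ of $F|K$ that admits local uniformization, so there is a model of $F|K$ on which $P$ is centered at a simple $K$-rational point. I would first feed this model, or rather an affine chart of it containing the center, together with that simple $K$-rational point, into Theorem~\ref{exratpl} to produce a rational place $Q\in S(F|K)$ of maximal rank centered at the same point.

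Next I would use that, by the definitions recalled in the introduction, every place of maximal rank is a zero-dimensional Abhyankar place, hence $Q$ is a rational Abhyankar place. Theorem~\ref{slu} then applies and tells us that $Q$ admits smooth local uniformization. In particular, the set of rational places of $F|K$ admitting smooth local uniformization is nonempty, and Theorem~\ref{densrplu} is therefore nonvacuous: any nonempty patch-open neighborhood of $Q$ in that space contains a rational discrete place. Choosing any such place gives a rational discrete place of $F|K$, so $F$ is an extension field of $K$ which admits a discrete $K$-rational place.

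At this point the model-theoretic machinery takes over: since $K$ is large, the implication $1)\Rightarrow 5)$ of Theorem~\ref{lfecKt} says that $K$ is existentially closed in every extension field admitting a discrete $K$-rational place, and applied to $F$ this yields precisely the conclusion.

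The argument is essentially a bookkeeping exercise rather than a proof with a genuine obstacle, because each major step has already been proved earlier in the paper. The only small point that must be checked carefully is the identification of $Q$ as a rational Abhyankar place so that Theorem~\ref{slu} is applicable; once this is observed, the theorem follows without further effort.
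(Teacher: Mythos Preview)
Your proposal is correct and follows essentially the same approach as the paper's own proof: apply Theorem~\ref{exratpl} to the simple $K$-rational center to obtain a rational place $Q$ of maximal rank, invoke Theorem~\ref{slu} to get smooth local uniformization for $Q$, then Theorem~\ref{densrplu} to produce a rational discrete place, and finally Theorem~\ref{lfecKt} to conclude. Your added remark that $Q$ is a rational Abhyankar place (needed to invoke Theorem~\ref{slu}) is exactly the small check the paper leaves implicit.
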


As an immediate consequence, we obtain:
\begin{theorem}
Assume that all rational places of arbitrary function fields admit local
uniformization. Then every large field satisfies (RP=EC), and the three
conditions of Theorem~\ref{pcond} are equivalent, for arbitrary
fields~$K$.
\end{theorem}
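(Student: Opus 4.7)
The plan is very short: both conclusions should reduce, via the discussion preceding the statement, essentially to Theorem~\ref{kecF1}, together with the observation that $K$-rational places on finitely generated subextensions come for free from a global $K$-rational place.

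First I will show that, under the standing hypothesis, every large field $K$ satisfies \textbf{(RP=EC)}. Take an algebraic function field $F|K$ admitting a rational place $P$. By hypothesis $P$ admits local uniformization. Applying Theorem~\ref{kecF1} directly, $K$ is existentially closed in $F$. That is precisely \textbf{(RP=EC)}.

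Now I will prove the equivalence $1)\Leftrightarrow 2)\Leftrightarrow 3)$ of Theorem~\ref{pcond} for arbitrary~$K$ (assuming the local uniformization hypothesis). For $1)\Rightarrow 3)$: let $K$ be large and let $L\supseteq K$ admit a $K$-rational place. To check that $K$ is existentially closed in $L$ it suffices, by the remark preceding \textbf{(RP=EC)}, to check it in every finitely generated subextension $F$ of $L$; the restriction of the $K$-rational place of $L$ to $F$ is a $K$-rational place of $F|K$, so the first part of the proof applies and $K$ is existentially closed in $F$. The implication $3)\Rightarrow 2)$ is trivial, since every power series field $K((G))$ admits the canonical $K$-rational place. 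Finally, for $2)\Rightarrow 1)$, specialize $G=\Z$: condition 2) gives that $K$ is existentially closed in $K((t))$, and then condition 4) of Theorem~\ref{lfecKt} yields that $K$ is large.

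No step poses a genuine obstacle: the local uniformization assumption is exactly what allows us to drop the ``admits local uniformization'' hypothesis from Theorem~\ref{kecF1}, and thereafter the equivalences are a bookkeeping exercise using Theorem~\ref{lfecKt} and the existence of canonical $K$-rational places on power series fields. The only mild care needed is in $1)\Rightarrow 3)$, where one reduces existential closure in an arbitrary $L$ to the function field case by restricting the rational place to finitely generated subextensions; this is precisely the reduction already spelled out in the paragraph defining \textbf{(RP=EC)}.
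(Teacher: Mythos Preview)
Your proof is correct and follows exactly the route the paper intends: the theorem is stated there as ``an immediate consequence'' of Theorem~\ref{kecF1}, and you have simply spelled out that consequence, using the same reduction of condition~3) to (RP=EC) via finitely generated subextensions and the same implications $3)\Rightarrow 2)\Rightarrow 1)$ (which hold without perfectness) from the proof of Theorem~\ref{pcond}.
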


Theorem~\ref{kecF1} together with Theorem~\ref{slu} implies:
\begin{corollary}                           \label{lrdraec}
Let $K$ be a large field and $F|K$ an algebraic function field. If there
is a rational discrete or a rational Abhyankar place of $F|K$, then $K$
is existentially closed in~$F$.
\end{corollary}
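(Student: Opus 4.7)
The plan is a direct two-step application of results already established in the excerpt. Starting from the hypothesis that $F|K$ has a rational place $P$ which is either discrete or an Abhyankar place, the first step is to invoke Theorem~\ref{slu}, which guarantees that every such $P$ admits smooth local uniformization. Since smooth local uniformization, as defined earlier, is a strengthening of the classical notion (it demands a model on which $P$ is centered at a smooth $K$-rational point, with any preassigned finite collection of elements of ${\cal O}_P$ contained in the coordinate ring), the place $P$ in particular admits ordinary local uniformization, which is the only property actually needed below.

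The second step is to apply Theorem~\ref{kecF1}, whose hypotheses ($K$ large, $F|K$ an algebraic function field admitting a rational place with local uniformization) are now exactly met. The conclusion that $K$ is existentially closed in $F$ is immediate. There is no real obstacle in writing out the corollary itself; the entire substantive content is already absorbed into the two feeder theorems. Namely, the local-uniformization statements of [K6] and [K--K] are packaged in Theorem~\ref{slu}, and the nontrivial chain Theorem~\ref{exratpl} $\Rightarrow$ Theorem~\ref{densrplu} $\Rightarrow$ Theorem~\ref{lfecKt} (producing a rational discrete place from a rational place with local uniformization on a large base field, and then promoting density to existential closedness via the (LF) characterization) is what powers Theorem~\ref{kecF1}.
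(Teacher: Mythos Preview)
Your proposal is correct and matches the paper's own argument exactly: the corollary is stated immediately after the sentence ``Theorem~\ref{kecF1} together with Theorem~\ref{slu} implies:'', so the intended proof is precisely the two-step deduction you give. Your additional unpacking of the chain behind Theorem~\ref{kecF1} is accurate but not required for the corollary itself.
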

\sn
For the case of $F|K$ admitting a rational discrete place $P$, the
assertion is already contained in Theorem~\ref{lfecKt}.

\begin{remark}
In [ER], Yuri Ershov proves the following: {\it If $K$ admits a
henselian valuation and $V$ is an algebraic variety over $K$ with
function field $F$ which admits a rational generalized discrete place
$P$, then $V$ has a simple rational point.} It then follows by
Theorem~\ref{exratpl} and Corollary~\ref{lrdraec} that $K$ is
existentially closed in $F$. Here, ``generalized discrete'' means that
the value group is the lexicographic product of copies of $\Z$. Ershov
also observes that if $\chara K=0$ or a weak form of local
uniformization holds in positive characteristic, then one does not need
any condition on the value group of the rational place.
\end{remark}

To conclude with, let us state the converse of our above results. The
following is a generalization of the lemma on p.~190 of [K--P]:
\begin{theorem}                               \label{exrp}
Let $F|K$ be an algebraic function field such that $K$ is existentially
closed in $F$. Take any elements $z_1,\ldots,z_n\in F$. Then there are
infinitely many rational places of $F|K$ of maximal rank which are
finite on $z_1,\ldots,z_n\,$.
\end{theorem}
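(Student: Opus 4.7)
The plan is to combine Theorem~\ref{exratpl} with the hypothesis of existential closure, thus reducing the problem to producing infinitely many simple $K$-rational points on a suitable affine model of $F|K$ whose coordinate ring contains $z_1,\ldots,z_n$.

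First I would record two preliminary facts. Since $F\neq K$ (as $\trdeg F|K\geq 1$), the field $K$ must be infinite: if $K=\F_q$, then $\exists X:X^q\neq X$ holds in $F$ (which contains transcendentals) but fails in $K$, contradicting existential closure. Secondly, $F|K$ is separable. Indeed, given $a\in F$ with $a^{p^m}\in K$ for some $m$ (where $p=\chara K$), the existential formula $\exists X:X^{p^m}=a^{p^m}$ holds in $F$ witnessed by $a$, hence in $K$ by existential closure; any solution $b\in K$ then satisfies $(b-a)^{p^m}=0$, so $a=b\in K$. Thus $F\cap K^{1/p^\infty}=K$, which for finitely generated $F|K$ is equivalent to separability.

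Using separability, I would extend $z_1,\ldots,z_n$ to a finite set $z_1,\ldots,z_N$ of generators of $F|K$ by appending a separating transcendence basis and a primitive element for the finite separable algebraic part. The affine irreducible $K$-variety $V$ with coordinate ring $K[z_1,\ldots,z_N]$ and defining ideal generated by $f_1,\ldots,f_s\in K[X_1,\ldots,X_N]$ then has the point $\xi=(z_1,\ldots,z_N)\in V(F)$ as a simple point; by the Jacobian criterion some $(N-d)\times(N-d)$ minor $M\in K[X_1,\ldots,X_N]$ of the Jacobian satisfies $M(\xi)\neq 0$ in $F$, where $d=\trdeg F|K$. The existential formula
\[\exists X_1,\ldots,X_N:\bigwedge_{i=1}^s f_i(X)=0\ \wedge\ M(X)\neq 0\]
with parameters from $K$ is satisfied in $F$ by $\xi$, hence in $K$ by existential closure, yielding a simple $K$-rational point $p_1$ of $V$. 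Theorem~\ref{exratpl} then supplies a rational place $P_1$ of $F|K$ of maximal rank centered at $p_1$, and since $z_1,\ldots,z_n$ lie in the local ring at $p_1$ they are finite on $P_1$.

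For infinitely many such places I would iterate. Given pairwise distinct simple $K$-rational points $p_1,\ldots,p_k\in V(K)$, observe that since $F\neq K$ some coordinate of $\xi$ lies outside $K$, so $\xi\neq p_j$ in $F^N$ for each $j\leq k$. Adjoining the disjunctive inequations $\bigvee_{l=1}^N(X_l\neq(p_j)_l)$ to the formula above produces a formula still satisfied by $\xi$ in $F$, hence by existential closure it has a solution $p_{k+1}\in V(K)$ distinct from all previous $p_j$. Theorem~\ref{exratpl} gives a rational place $P_{k+1}$ of maximal rank centered at $p_{k+1}$, and places with distinct centers on a fixed model are distinct, so the $P_k$ are pairwise distinct. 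The main obstacle in the whole argument is the separability step in the preliminaries, without which no affine model of $F|K$ has a simple generic point and the existential closure would yield nothing; once that is overcome the rest is routine.
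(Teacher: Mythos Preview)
Your overall strategy is the paper's: establish that $F|K$ is separable, use existential closure to locate a simple $K$-rational point on an affine model whose coordinate ring contains the~$z_i$, produce from it a rational place of maximal rank, and iterate. The paper realizes the place by an explicit embedding of $F$ into the iterated Laurent series field $K((X_1))\cdots((X_d))$ (sending $x_i\mapsto X_i+a_i$), whereas you invoke Theorem~\ref{exratpl}; and the paper iterates via the auxiliary elements $(x_1-x_1P_j)^{-1}$ rather than via point-inequations. These are cosmetic differences.

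The genuine gap is in your separability step. You correctly derive $F\cap K^{1/p^\infty}=K$, but the assertion that for finitely generated $F|K$ this is equivalent to separability is false. A counterexample: $K=\F_p(s,t)$, $F=K(u,v)$ with $u$ transcendental over $K$ and $v^p=s+tu^p$. The relation $s=v^p-t u^p\in F^p(t)$ shows that the $p$-basis $\{s,t\}$ of $K$ becomes $p$-dependent in $F$, so $F|K$ is not separable; yet via the embedding $F\hookrightarrow K^{1/p}(u)$ given by $v\mapsto s^{1/p}+t^{1/p}u$ one checks that $K$ is algebraically closed in $F$, hence in particular $F\cap K^{1/p^\infty}=K$. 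The fix is to use existential closure against a $p$-dependence relation directly: if $F|K$ were not separable there would exist $b_1,\ldots,b_m\in K$ linearly independent over $K^p$ but linearly dependent over $F^p$, and then the sentence $\exists X_1\cdots\exists X_m\bigl(\sum_i X_i^{\,p}\, b_i=0\ \wedge\ \bigvee_i X_i\neq 0\bigr)$ with parameters $b_i\in K$ would hold in $F$ but not in $K$. With this repair the rest of your argument goes through.
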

Note the analogy between this theorem and Theorem~\ref{exratpl}.

%
%
\subsection{The key ingredient for the proof of the main theorem}
The key ingredient in our proofs of Theorem~\ref{MKP} and
Theorem~\ref{MKPZ} is our generalization of the Ax--Kochen--Ershov
Theorem to the class of all tame fields. A \bfind{tame field} is a
henselian valued field $(K,v)$ for which the ramification field $K^r$ of
the normal extension $(K\sep|K,v)$ is algebraically closed. Here,
$K\sep$ denotes the separable algebraic closure. It follows from the
definition that for a tame field $(K,v)$, $K\sep$ is algebraically
closed, i.e., $K$ is perfect. For further basic properties of tame
fields, see Section~\ref{sectprt}.

By $P_v$ we denote the place associated with $v$. The following theorem
was proved in [K1] (and will be published in [K2], [K7]):

\begin{theorem}                             \label{tameAKE}
Every tame field $(K,v)$ satisfies the following Ax-Kochen-Ershov
principle:\sn
If $(K,v)\subset (F,v)$ is an extension of valued fields, $KP_v$ is
existentially closed in $FP_v$ (in the language of fields), and $vK$ is
existentially closed in $vF$ (in the language of ordered groups), then
$(K,v)$ is existentially closed in $(F,v)$ (in the language of valued
fields).
\end{theorem}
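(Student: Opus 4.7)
The plan is the standard approach to existential Ax--Kochen--Ershov principles: reduce the problem to embedding a finitely generated part of $(F,v)$ over $(K,v)$ into a sufficiently saturated elementary extension $(K^*,v^*)$ of $(K,v)$, and construct that embedding by reading off value and residue data from the two hypotheses and filling in the rest with the structure theory of tame fields. Since every existential sentence mentions only finitely many parameters and witnesses, it suffices to handle finitely generated $F|K$. Fix a cardinal $\kappa>|F|$ and let $(K^*,v^*)$ be a $\kappa$-saturated elementary extension of $(K,v)$. Because tameness is axiomatized by an elementary scheme in the language of valued fields (henselianity, $p$-divisibility of the value group, perfectness of the residue field, and defectlessness), $(K^*,v^*)$ is itself tame, and $v^*K^*$ (resp.\ $K^*P_{v^*}$) is a $\kappa$-saturated elementary extension of $vK$ (resp.\ $KP_v$). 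The hypotheses then yield, via the standard equivalence between existential closure and embeddability into saturated elementary extensions, an ordered-group embedding $\iota\colon vF\hookrightarrow v^*K^*$ over $vK$ and a field embedding $\lambda\colon FP_v\hookrightarrow K^*P_{v^*}$ over $KP_v$. The task reduces to lifting the pair $(\iota,\lambda)$ to a valued-field embedding $\Phi\colon(F,v)\hookrightarrow(K^*,v^*)$ over $(K,v)$.

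For the lifting, choose a transcendence basis $t_1,\ldots,t_n$ of $F|K$ adapted to the valuation, so that $vt_1,\ldots,vt_r$ are rationally independent modulo $vK$ and the residues $t_{r+1}P_v,\ldots,t_{r+s}P_v$ are algebraically independent over $KP_v$; any remaining $t_i$ contribute to an immediate transcendental step. Using $\kappa$-saturation together with $\iota$ and $\lambda$, pick elements $t_i^*\in K^*$ realizing the prescribed values and residues, which produces a $(K,v)$-embedding of $(K(t_1,\ldots,t_n),v)$ into $(K^*,v^*)$ inducing $\iota$ and $\lambda$ on the generated subdata. Pass to henselizations on both sides; this is canonical because $(K^*,v^*)$ is henselian. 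What remains is to embed the finite algebraic extension $F|K(t_1,\ldots,t_n)^h$ of a henselian base into $(K^*,v^*)$ over its henselized function-field core. One does this by climbing the resulting algebraic tower step by step, invoking at each step the relative embedding theorem of Kuhlmann's tame field theory from [K1]: a finite tame extension of a henselian tame base is determined up to valued-field $K$-isomorphism by its value-group and residue-field data, and embeds into any sufficiently saturated tame overfield whose value group and residue field accommodate that data.

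The main obstacle, and the place where tameness of $(K,v)$ is indispensable, is this final algebraic stage: matching each step of the tower $F|K(t_1,\ldots,t_n)^h$ with a valuation-compatible step in $(K^*,v^*)$ respecting $\iota$ and $\lambda$. Defectlessness (a consequence of tameness) rules out immediate algebraic pathologies in the tower, and the tame classification of Galois extensions by their value-group and residue-field effects supplies, at each level, a corresponding extension inside $(K^*,v^*)$ realized by saturation. Any immediate transcendental step in the chosen transcendence basis is handled by a separate, softer saturation argument, since $\kappa$-saturation of $(K^*,v^*)$ realizes every valuation-theoretic type over a subfield of size less than $\kappa$. Assembling these pieces produces the required embedding $\Phi$ and thereby the Ax--Kochen--Ershov principle.
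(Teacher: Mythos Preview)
The paper does not prove Theorem~\ref{tameAKE}; it quotes the result from [K1] (to be published in [K2], [K7]) and uses it as a black box. So there is no in-paper proof to compare your sketch against.

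That said, your overall architecture---reduce to an embedding problem into a $\kappa$-saturated elementary extension of $(K,v)$ and build the embedding in layers (value-transcendental, residue-transcendental, immediate transcendental, then finite algebraic on top)---is indeed the shape of Kuhlmann's argument in [K1]/[K7]. Two points in your sketch are too optimistic, however. First, the immediate transcendental step is not the ``softer'' one: in positive residue characteristic this is precisely where the difficulty concentrates, and bare $\kappa$-saturation does not by itself let you realize the valuation-theoretic type of an immediate transcendental element over the image of the already-embedded part. One needs the tame hypothesis here, via Kaplansky's structure theory and the uniqueness of maximal immediate extensions for tame fields, to ensure that the relevant pseudo-Cauchy type is determined by data that $(K^*,v^*)$ can be made to realize. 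Second, the henselized base $K(t_1,\ldots,t_n)^h$ over which you propose to climb the finite algebraic tower is in general \emph{not} tame (its value group need not be $p$-divisible, its residue field need not be perfect), so you cannot directly invoke a ``finite extensions of a tame base are classified by value-group and residue-field data'' lemma at that level. In Kuhlmann's actual proof one proceeds by induction on $\trdeg F|K$, reducing to transcendence degree one and treating the three cases separately; the algebraic top is handled relative to a carefully chosen tame intermediate field rather than over the raw henselized rational function field.
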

\n
For the meaning of ``existentially closed'' in the setting of valued
fields and of ordered abelian groups, see [K--P].

Note that for the proof of Corollary~\ref{cor2} we would only need
Abraham Robinson's theorem on the model completeness of the theory of
algebraically closed valued fields (cf.\ [RO]). But we need the above
theorem in order to obtain assertions (a) and (b) in Theorems~\ref{MKP},
\ref{MKPZ} and~\ref{MTdropd}.

%
%
\section{Some preliminaries}
%
%
For basic facts from valuation theory, see [EN], [RI], [W], [Z--SA],
[K2].

%
%
\subsection{Valuation independence}
For the easy proof of the following theorem, see [BO], Chapter VI,
\S10.3, Theorem~1, or [K2].
\begin{lemma}                                      \label{prelBour}
Let $L|K$ be an extension of fields and $v$ a valuation on $L$ with
associated place $P_v\,$. Take elements $x_i,y_j \in L$, $i\in I$, $j\in
J$, such that the values $vx_i\,$, $i\in I$, are rationally independent
over $vK$, and the residues $y_jP_v$, $j\in J$, are algebraically
independent over $KP_v$. Then the elements $x_i,y_j$, $i\in I$, $j\in
J$, are algebraically independent over $K$. Moreover,
\begin{eqnarray*}
vK(x_i,y_j\mid i\in I,j\in J) & = & vK\oplus\bigoplus_{i\in I}
\Z vx_i \;, \\
K(x_i,y_j\mid i\in I,j\in J)P_v & = & KP_v\,(y_jP_v\mid j\in J)\;.
\end{eqnarray*}
The valuation $v$ on $K(x_i,y_j\mid i\in I,j\in J)$ is
uniquely determined by its restriction to $K$, the values $vx_i$ and
the residues $y_jP_v$.
\parm
Conversely, let $(K,v)$ be any valued field, $\alpha_i$ values in
some ordered abelian group extension of $vK$, and $\xi_j$ elements in
some field extension of $KP_v\,$. Then there exists an extension of $v$
to the purely transcendental extension $K(x_i,y_j\mid i\in I,j\in J)$
such that $vx_i=\alpha_i$ and $y_jP_v=\xi_j\,$.
\end{lemma}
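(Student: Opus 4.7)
The plan is to reduce both halves of the lemma to a single explicit formula for the value of a polynomial in the $x_i$ and $y_j$. Writing $f\in K[X_i,Y_j]$ grouped by $X$-exponent as $f=\sum_\mu X^\mu g_\mu(Y)$ with $g_\mu(Y)=\sum_\nu c_{\mu\nu}Y^\nu\in K[Y]$, the target identity is
\[
 v(f(x,y)) \;=\; \min_\mu\Bigl(\textstyle\sum_i\mu_i\,vx_i \;+\; \min_\nu vc_{\mu\nu}\Bigr),
\]
from which every clause of the first part drops out at once: $f(x,y)\neq 0$ whenever $f\neq 0$ (algebraic independence), the value group equals $vK\oplus\bigoplus_i\Z vx_i$, and the valuation on $K(x_i,y_j)$ is pinned down by the data $(v|_K,\,vx_i,\,y_jP_v)$.

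Two steps prove the formula. First I would establish $v(g_\mu(y))=\min_\nu vc_{\mu\nu}$: divide $g_\mu$ by a coefficient of minimal value to obtain $\tilde g_\mu\in\mathcal O_{P_v}[Y]$ with one coefficient equal to $1$; reducing its coefficients modulo $\mathcal M_{P_v}$ gives a non-zero polynomial over $KP_v$, which by the assumed algebraic independence of the $y_jP_v$ over $KP_v$ does not vanish at the point $(y_jP_v)$, so $v(\tilde g_\mu(y))=0$. Second, the rational independence of the $vx_i$ over $vK$ forces the quantities $\sum_i\mu_i vx_i+v(g_\mu(y))$ for distinct $\mu$ to lie in distinct cosets of $vK$ and hence to be pairwise distinct; the ultrametric inequality then delivers the formula with the minimum attained at a unique index $\mu_0$. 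For the residue field identity $K(x_i,y_j)P_v=KP_v(y_jP_v)$ I would take $h\in K(x_i,y_j)$ with $vh=0$, write $h=f(x,y)/g(x,y)$, and divide numerator and denominator by their respective unique dominant monomials $c_{\mu_0\nu_0}x^{\mu_0}$; the non-dominant terms have strictly positive value and the dominant quotient reduces modulo $\mathcal M_{P_v}$ to an element of $KP_v(y_jP_v)$.

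For the converse I would build the extension in two stages. First extend $v$ to $K(Y_j\mid j\in J)$ by the Gauss-type formula $w(\sum c_\nu Y^\nu)=\min_\nu vc_\nu$, producing a valuation with value group $vK$ whose residue field is the purely transcendental extension of $KP_v$ in the variables $Y_jP_v$; composing with the $KP_v$-isomorphism sending $Y_jP_v\mapsto\xi_j$ realises the prescribed residues. Then adjoin the $X_i$ one at a time by the standard monomial extension $v(\sum a_k X_i^k)=\min_k(va_k+k\alpha_i)$, which is a legitimate valuation precisely because $\alpha_i$ is rationally independent of the value group built so far. The main technical obstacle throughout is the auxiliary identity $v(g_\mu(y))=\min_\nu vc_{\mu\nu}$: it is the unique point where the hypothesis of algebraic independence of the residues enters, and every remaining clause — including the residue field description and the correctness of the Gauss extension in the converse — rests on it together with the coset-disjointness extracted from rational independence.
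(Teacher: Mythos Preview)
Your argument is correct and is essentially the classical proof. The paper itself does not supply a proof of this lemma; it simply refers the reader to Bourbaki, \emph{Commutative Algebra}, Chapter~VI, \S10.3, Theorem~1, and the argument there is exactly the one you outline: establish the Gauss-type identity $v(g_\mu(y))=\min_\nu vc_{\mu\nu}$ from algebraic independence of the residues, then use rational independence of the $vx_i$ to separate the $x$-monomials into distinct cosets of $vK$, obtaining the explicit minimum formula for $v(f(x,y))$ from which all clauses follow.

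Two small remarks. In the residue-field step, when you divide numerator and denominator of $h=f/g$ by their ``respective'' dominant monomials, you should note that $vh=0$ forces the dominant $x$-exponent $\mu_0$ to coincide for $f$ and $g$ (otherwise the two values would lie in different cosets of $vK$), so the leftover factor $c^{(f)}_{\mu_0\nu_0}/c^{(g)}_{\mu_0\nu_0'}$ is a unit in $K$ and its residue lands in $KP_v$ as needed. For the converse, your construction tacitly assumes that the $\alpha_i$ are rationally independent over $vK$ and the $\xi_j$ algebraically independent over $KP_v$ (otherwise your ``$KP_v$-isomorphism'' is only a homomorphism and the monomial extension by $\alpha_i$ need not be a valuation); this is indeed the natural reading of ``Conversely'' and is the only case used in the paper.
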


\begin{corollary}                              \label{fingentb}
Let $L|K$ be an extension of finite transcendence degree,
and $v$ a valuation on $L$. Then
\begin{equation}                            \label{wtdgeq}
\trdeg L|K\>\geq\>\trdeg LP_v|KP_v\,+\,\dim_\Q\,\Q\otimes (vL/vK)\;.
\end{equation}
If in addition $L|K$ is a function field and if equality holds in
(\ref{wtdgeq}), then $vL/vK$ and $LP_v|KP_v$ are finitely
generated.
\end{corollary}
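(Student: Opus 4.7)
The plan is to read the inequality directly off Lemma~\ref{prelBour} and then, for the equality case, reduce to a finite extension of valued fields and apply the fundamental inequality.

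For the inequality, set $r:=\dim_\Q\Q\otimes(vL/vK)$ and $d:=\trdeg LP_v|KP_v$; by assumption both are finite (since $\trdeg L|K$ is). First I choose elements $x_1,\ldots,x_r\in L$ whose values form a maximal $\Q$-linearly independent family in $\Q\otimes(vL/vK)$, and elements $y_1,\ldots,y_d\in L$ whose residues form a transcendence basis of $LP_v|KP_v$. Lemma~\ref{prelBour} then asserts that $x_1,\ldots,x_r,y_1,\ldots,y_d$ are algebraically independent over $K$, which yields $\trdeg L|K\geq r+d$, i.e., inequality~(\ref{wtdgeq}).

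Now assume that $L|K$ is a function field and equality holds in (\ref{wtdgeq}). Then the chosen elements form a transcendence basis of $L|K$, so $L$ is algebraic over the purely transcendental subfield $K':=K(x_1,\ldots,x_r,y_1,\ldots,y_d)$. Because $L$ is finitely generated over $K$, it is also finitely generated over $K'$, hence $L|K'$ is a \emph{finite} extension. By the second part of Lemma~\ref{prelBour}, applied over $K$ (with the given values $vx_i$ and residues $y_jP_v$), the value group and residue field of $K'$ are
\[
vK' \,=\, vK\oplus\bigoplus_{i=1}^{r}\Z vx_i\qquad\text{and}\qquad K'P_v \,=\, KP_v(y_1P_v,\ldots,y_dP_v),
\]
so $vK'/vK$ is a finitely generated abelian group and $K'P_v|KP_v$ is a finitely generated field extension.

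It therefore remains to pass from $K'$ to $L$. Since $L|K'$ is a finite field extension, the fundamental inequality of valuation theory gives $[vL:vK']<\infty$ and $[LP_v:K'P_v]<\infty$ (both quantities being bounded by $[L:K']$). Consequently $vL/vK'$ is finite and $LP_v|K'P_v$ is finite, and combining this with the previous step shows that $vL/vK$ is finitely generated and $LP_v|KP_v$ is finitely generated, as required. The only step requiring any input beyond Lemma~\ref{prelBour} is this last invocation of the fundamental inequality for finite extensions of valued fields, which is standard (e.g.\ from [EN] or [Z--SA]) and will not constitute a real obstacle.
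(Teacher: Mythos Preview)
Your proof is correct and follows essentially the same route as the paper's: choose elements realizing the rational rank and residue transcendence degree, apply Lemma~\ref{prelBour} for the inequality, and in the equality case pass to the finite extension $L|K'$ and invoke the fundamental inequality. The only cosmetic difference is that the paper first uses Lemma~\ref{prelBour} with arbitrary finite $\rho,\tau$ to \emph{deduce} that $r$ and $d$ are finite, whereas you assert this up front; your parenthetical ``since $\trdeg L|K$ is'' is really just shorthand for that same step.
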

\begin{proof}
Choose elements $x_1,\ldots,x_{\rho},y_1,\ldots,y_{\tau}\in L$ such that
the values $vx_1,\ldots,vx_{\rho}$ are rationally independent over $vK$
and the residues $y_1P_v,\ldots,y_{\tau}P_v$ are algebraically
independent over $KP_v$. Then by the foregoing lemma, $\rho+\tau\leq
\trdeg L|K$. This proves that $\trdeg LP_v|KP_v$ and the rational rank
of $vL/vK$ are finite. Therefore, we may choose the elements $x_i,y_j$
such that $\tau=\trdeg LP_v|KP_v$ and $\rho=\dim_{\Q}\,\Q\otimes
(vL/vK)$ to obtain inequality (\ref{wtdgeq}).

Assume that this is an equality. This means that for $L_0:=K(x_1,\ldots,
x_{\rho},y_1,\ldots,y_{\tau})$, the extension $L|L_0$ is algebraic. Since
$L|K$ is finitely generated, it follows that this extension is finite.
This yields that $vL/vL_0$ and $LP_v| L_0P_v$ are finite (cf.\ [EN],
[RI] or [BO]). Since already $v L_0/v K$ and $L_0P_v|KP_v$ are finitely
generated by the foregoing lemma, it follows that also $vL/vK$
and $LP_v|KP_v$ are finitely generated.
\end{proof}

The proof of the following fact will be published in [K8]:
\begin{proposition}                         \label{trdegmi}
Let $L|K$ be an extension of finite transcendence degree, and $v$ a
nontrivial valuation on $L$. If $\trdeg LP_v|KP_v\geq 1$ or $\dim_\Q\,
\Q\otimes (vL/vK)\geq 1$, then $(L,v)$ admits an immediate extension of
infinite transcendence degree.
\end{proposition}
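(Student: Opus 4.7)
The plan is to build the desired immediate extension by iteratively applying Kaplansky's theorem on pseudo-Cauchy sequences. Recall that if $(b_\nu)_{\nu<\lambda}$ is a pseudo-Cauchy sequence of transcendental type in $(L,v)$ without a pseudo-limit in $L$, then $(L,v)$ admits a proper immediate extension $(L(a),\bar v)$ in which $a$ is a pseudo-limit of the sequence, and $a$ is transcendental over $L$. The strategy is to produce, at each stage, such a sequence and adjoin its limit, obtaining an ascending chain $L\subseteq L(a_1)\subseteq L(a_1,a_2)\subseteq\cdots$ of proper immediate transcendental extensions whose union has infinite transcendence degree over $L$.

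To set up the first sequence I use the hypothesis to fix convenient transcendental parameters in $L$. In the value-transcendental case $\dim_{\Q}\Q\otimes(vL/vK)\geq 1$ I pick $t\in L$ with $vt$ rationally independent over $vK$. In the residue-transcendental case $\trdeg LP_v|KP_v\geq 1$ I pick $y\in L$ with $yP_v$ transcendental over $KP_v$ together with an element $s\in L$ satisfying $vs>0$, which exists because $v$ is nontrivial on $L$. In either situation, Lemma~\ref{prelBour} gives full control over the value and residue of any polynomial expression in these elements over $K$.

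Using this control, I form partial sums of the shape $b_n=\sum_{k<n}c_k\pi_k$ along an ascending cofinal $\omega$-sequence of values, where the monomials $\pi_k$ in $t$ (resp.\ in $s$ and $y$) realize the prescribed values and the coefficients $c_k\in L$ are chosen generically so as to force no polynomial relation on a would-be limit. The finite transcendence degree assumption $\trdeg L|K<\infty$ is the decisive input ruling out a pseudo-limit in $L$: any such $b\in L$ lies in a finitely generated subextension of $L|K$, and Corollary~\ref{fingentb} together with Lemma~\ref{prelBour} yields a bound on the value data achievable by elements of that subextension, which contradicts the distance pattern $v(b-b_n)$ would need to satisfy. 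One then verifies that the coefficients can be chosen so that the sequence is of transcendental type, so that Kaplansky's theorem produces a transcendental pseudo-limit~$a_1$ in an immediate extension.

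Once $a_1$ has been adjoined, the extension $L(a_1)|L$ is immediate, so the residue field and value group are unchanged; hence both hypotheses of the proposition continue to hold for $L(a_1)|K$, while $\trdeg L(a_1)|K=\trdeg L|K+1$ stays finite. The entire construction thus iterates verbatim to produce algebraically independent transcendental pseudo-limits $a_1,a_2,\ldots$, yielding an immediate extension $L(a_1,a_2,\ldots)|L$ of infinite transcendence degree. The main technical obstacle is the third step: the simultaneous arrangement that the pseudo-Cauchy sequence is of transcendental type \emph{and} has no pseudo-limit in $L$. This is precisely where the finite-transcendence-degree hypothesis must be exploited, and is presumably where the bulk of the argument in the forthcoming [K8] resides.
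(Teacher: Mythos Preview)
The paper does not prove Proposition~\ref{trdegmi}; immediately before the statement it says ``The proof of the following fact will be published in [K8].'' So there is no in-paper argument against which to compare your proposal.

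On its own merits, your proposal is a plausible outline rather than a proof. You correctly single out Kaplansky's theory of pseudo-Cauchy sequences as the natural tool and correctly locate the crux: one must produce a pseudo-Cauchy sequence in $(L,v)$ of transcendental type with no pseudo-limit in $L$, and then iterate. The iteration step is fine, since adjoining a single transcendental immediate element preserves both the hypotheses and the finiteness of $\trdeg(\,\cdot\,|K)$. But the sentence in which you invoke ``Corollary~\ref{fingentb} together with Lemma~\ref{prelBour}'' to rule out a pseudo-limit $b\in L$ is not an argument as it stands: Corollary~\ref{fingentb} only records the Abhyankar inequality and finite generation in the equality case, and neither result furnishes a bound that visibly conflicts with a prescribed distance pattern $n\mapsto v(b-b_n)$. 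The companion claim that ``the coefficients can be chosen so that the sequence is of transcendental type'' is likewise asserted, not shown. You acknowledge as much in your final paragraph, deferring the heart of the matter to [K8]; but then your write-up is a restatement of the difficulty rather than a resolution of it.

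One small technical slip: the phrase ``ascending cofinal $\omega$-sequence of values'' is misplaced. The positive cone of $vL$ need not have countable cofinality, and in any case a pseudo-Cauchy sequence only requires the gauge $v(b_{n+1}-b_n)$ to be strictly increasing, not cofinal in anything.
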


%
%
\subsection{Tame fields}             \label{sectprt}
We will now discuss basic properties of tame fields. Let $(K,v)$ be a
valued field. Recall that we set $p=\chara KP_v$ if this is positive,
and $p=1$ if $\chara K=0$. By ramification theory, $K\sep|K^r$ is a
$p$-extension. Hence if $(K,v)$ is a henselian field of residue
characteristic $\chara KP_v=0$, then this extension is trivial. Since
then also $\chara K=0$, it follows that $K^r=K\sep=\tilde{K}$, the
algebraic closure of $K$. Therefore,
\begin{lemma}                               \label{tamerc0}
Every henselian field of residue characteristic 0 is a tame field.
\end{lemma}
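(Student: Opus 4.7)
The plan is to unwind the definition of ``tame field'' and apply the ramification-theoretic fact stated in the paragraph immediately preceding the lemma. By definition, to show $(K,v)$ is tame, we need to verify two things: that $(K,v)$ is henselian (given by hypothesis) and that the ramification field $K^r$ of $(K\sep|K,v)$ is algebraically closed.

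First I would recall that by classical ramification theory, the extension $K\sep|K^r$ is a $p$-extension, where $p=\chara KP_v$ if this residue characteristic is positive and $p=1$ otherwise. Since we assume $\chara KP_v=0$, the convention gives $p=1$, so $K\sep|K^r$ is a $1$-extension, i.e., trivial. Hence $K^r=K\sep$.

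Next I would observe that $\chara KP_v=0$ forces $\chara K=0$ as well (since the residue characteristic either equals the field characteristic or is $0$ when $\chara K=0$; it cannot exceed $\chara K$). In characteristic zero every algebraic extension is separable, so $K\sep=\tilde{K}$, the algebraic closure of $K$. Combining this with the previous step yields $K^r=\tilde{K}$, which is algebraically closed, so $(K,v)$ satisfies the defining condition of a tame field.

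This proof is essentially a one-line chase through definitions; the only substantive input is the standard ramification-theoretic fact that $K\sep|K^r$ is a $p$-extension, which is assumed as background. There is no real obstacle, so I would present the argument as a compact two- or three-sentence verification rather than spelling out further detail.
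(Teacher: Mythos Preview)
Your argument is correct and matches the paper's own proof essentially line for line: the paper also uses that $K\sep|K^r$ is a $p$-extension, concludes it is trivial since $p=1$ in residue characteristic~$0$, and then notes $\chara K=0$ so that $K\sep=\tilde{K}$, giving $K^r=\tilde{K}$.
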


Suppose that $K_1|K$ is a subextension of $K^r|K$. Then $K_1^r=K^r$.
This proves:
\begin{lemma}                               \label{alget}
Every algebraic extension of a tame field is again a tame field.
\end{lemma}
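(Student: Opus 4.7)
The plan is to unravel the definition of tame field and apply the ramification-theoretic hint given just above the statement. First I would note that if $(K,v)$ is tame, then by definition $K^r$ is algebraically closed, and since $K^r\subseteq K\sep\subseteq \tilde K$, this forces $K^r=K\sep=\tilde K$; in particular, $K$ is perfect. Consequently, every algebraic extension $L|K$ satisfies $L\subseteq \tilde K=K^r$, so $L$ is a subextension of $K^r|K$.

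At this point I would invoke the assertion flagged by the author right before the lemma: if $K\subseteq K_1\subseteq K^r$, then $K_1^r=K^r$. Applying this with $K_1=L$ gives $L^r=K^r=\tilde K$, which is algebraically closed. Since $L^r$ is the ramification field of $(L\sep|L,v)$, this verifies the ramification-closedness condition in the definition of a tame field for $(L,v)$.

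It remains to check that $(L,v)$ is henselian. This is standard: an algebraic extension of a henselian valued field, equipped with the unique extension of the valuation, is again henselian. Combining this with the previous paragraph, $(L,v)$ is tame.

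The only nontrivial ingredient is the cited ramification-theoretic identity $L^r=K^r$ for intermediate fields $L$ of $K^r|K$. This is a classical consequence of the fact that the ramification subgroup of $\mathrm{Gal}(K\sep|K)$ behaves well under passage to subgroups: the ramification group of $\mathrm{Gal}(K\sep|L)$ is exactly $\mathrm{Gal}(K\sep|K)^{\mathrm{ram}}\cap \mathrm{Gal}(K\sep|L)$, and its fixed field is $K^r\cdot L=K^r$ because $L\subseteq K^r$. Since the author states this fact explicitly just before the lemma, I would simply quote it rather than reprove it.
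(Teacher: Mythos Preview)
Your proof is correct and follows essentially the same approach as the paper: the paper's proof consists precisely of the observation that any algebraic extension $K_1$ of a tame field $K$ is a subextension of $K^r|K$ (since $K^r=\tilde K$), together with the quoted fact $K_1^r=K^r$. You are slightly more explicit than the paper in verifying that $L$ is henselian, which the author leaves implicit.
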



A valued field $(K,v)$ is called \bfind{algebraically maximal} if it
admits no proper immediate algebraic extension. Since the henselization
is an immediate algebraic extension, every algebraically maximal field
is henselian. We give a characterization for tame fields
(the proof will be published in [K2], [K7]):
\begin{lemma}                    \label{tame}
The following assertions are equivalent:\n
1)\ \  $(K,v)$ is a tame field,\n
2)\ \  $(K,v)$ is algebraically maximal, $vK$ is $p$-divisible
and $KP_v$ is perfect.
\sn
If in addition $\chara K=\chara KP_v$, then the above assertions
are also equivalent to
\n
3)\ \  $(K,v)$ is algebraically maximal and perfect.
\end{lemma}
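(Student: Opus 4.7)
The plan is to derive the equivalences from classical ramification theory of henselian valued fields, using the following standard facts about the ramification field $K^r$ of a henselian $(K,v)$: (i) $vK^r$ is the subgroup of the divisible hull $v\tilde K$ consisting of elements whose order modulo $vK$ is prime to $p$; (ii) $K^r P_v$ is the separable algebraic closure of $KP_v$; (iii) every finite subextension $L|K$ of $K^r|K$ is defectless, so $[L:K]=(vL:vK)(LP_v:KP_v)$; and (iv) $\mathrm{Gal}(K\sep|K^r)$ is a pro-$p$ group.

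For $(1)\Rightarrow(2)$: assuming $K^r$ is algebraically closed, we have $K\sep=K^r=\tilde K$, so $K$ is perfect. Comparing $vK^r=v\tilde K$ with (i) forces the $p$-primary part of $v\tilde K/vK$ to vanish, which is equivalent to $vK$ being $p$-divisible. Likewise, (ii) combined with $K^r P_v=\tilde K P_v$ says that the separable algebraic closure of $KP_v$ already equals its algebraic closure, so $KP_v$ is perfect. Finally, any immediate algebraic extension $L|K$ sits inside $K\sep=K^r$ and, by (iii), satisfies $[L:K]=1\cdot 1=1$; hence $(K,v)$ is algebraically maximal.

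For $(2)\Rightarrow(1)$: algebraic maximality implies henselianity, so the ramification-theoretic setup applies. From $p$-divisibility of $vK$ and perfectness of $KP_v$ together with (i) and (ii), we deduce $vK^r=v\tilde K$ (divisible) and $K^r P_v=\widetilde{KP_v}$ (algebraically closed). Next I would verify that $K$ itself is perfect: immediate in characteristic zero, while if $\chara K=\chara KP_v=p$, then for any $a\in K$ the extension $K(a^{1/p})|K$ is immediate (the value $v(a)/p$ lies in the $p$-divisible $vK$, and the residue $p$-th root lies in the perfect $KP_v$), hence trivial by algebraic maximality, so $a^{1/p}\in K$. Thus $K\sep=\tilde K$. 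Since $vK^r$ is divisible and $K^r P_v$ is algebraically closed, any proper algebraic extension of $K^r$ is automatically immediate, so it suffices to show that $K^r$ is itself algebraically maximal. This is the step I expect to be the main obstacle: one transfers algebraic maximality from $K$ to $K^r$ by invoking the defectlessness of $K^r|K$ from (iii), together with the standard principle that a defectless algebraic extension of an algebraically maximal field is again algebraically maximal.

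For $(2)\Leftrightarrow(3)$ under the hypothesis $\chara K=\chara KP_v$: in characteristic zero the two conditions coincide tautologically (as $p=1$ and every residue field of characteristic $0$ is perfect). Assume therefore $\chara K=\chara KP_v=p>0$. If $K$ is perfect, then $a^{1/p}\in K$ for all $a\in K$ yields $v(a)/p\in vK$, and, after normalizing to $v(a)=0$, reduction gives $\bar a^{1/p}\in KP_v$; so $vK$ is $p$-divisible and $KP_v$ is perfect. Conversely, if $vK$ is $p$-divisible and $KP_v$ is perfect, then the immediate-extension argument used in $(2)\Rightarrow(1)$ produces $a^{1/p}\in K$ for every $a\in K$, so $K$ is perfect.
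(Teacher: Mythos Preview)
The paper does not give its own proof of this lemma; it merely states the result and defers the proof to the references [K1], [K2], [K7]. So there is no in-paper argument against which to compare yours directly.

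Your arguments for $(1)\Rightarrow(2)$ and for $(2)\Leftrightarrow(3)$ in equal characteristic are correct and are the standard ones.

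The gap is exactly where you flagged it. The ``standard principle'' you invoke in $(2)\Rightarrow(1)$---that a defectless algebraic extension of an algebraically maximal field is again algebraically maximal---is \emph{not} a standard fact, and the obvious attempt to justify it fails. Suppose $K$ is algebraically maximal, $L|K$ is finite and defectless, and $M|L$ is a proper immediate extension; then $M|K$ has defect $[M:L]>1$, but $M|K$ is not immediate (its ramification index and residue degree are those of $L|K$), so nothing contradicts the algebraic maximality of $K$. Algebraic maximality is strictly weaker than defectlessness in general, and the whole content of $(2)\Rightarrow(1)$ is precisely to bridge that gap under the extra hypotheses on $vK$ and $KP_v$. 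You have correctly reduced the problem to showing that $K^r$ admits no proper immediate algebraic extension, but closing that step requires more than facts (i)--(iv): in Kuhlmann's treatment (the cited references) one shows that hypothesis~(2) actually forces $(K,v)$ to be \emph{defectless}, via an analysis of purely wild extensions of degree $p$, and then the conclusion for $K^r$ follows by multiplicativity of defect. Your outline does not yet contain that ingredient.
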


\begin{corollary}                           \label{cortame}
Assume that $\chara K=\chara KP_v$. Then every maximal immediate
algebraic extension of the perfect hull of $(K,v)$ is a tame field.
\end{corollary}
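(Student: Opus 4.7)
The plan is to verify the three hypotheses of condition~2) of Lemma~\ref{tame} for the field in question, and then invoke the lemma to conclude tameness. Let $K':=K^{1/p^\infty}$ denote the perfect hull of $K$; since $K'|K$ is purely inseparable algebraic, $v$ extends uniquely to $K'$. Let $L$ be a maximal immediate algebraic extension of $(K',v)$. I aim to show that $L$ is algebraically maximal, that $vL$ is $p$-divisible, and that $LP_v$ is perfect.

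The first point is essentially by construction: if $L'|L$ were a proper immediate algebraic extension, then $L'|K'$ would be algebraic and, by transitivity of immediacy, immediate, yielding a proper immediate algebraic extension of $K'$ strictly containing $L$ and contradicting the maximality of $L$. Hence $L$ is algebraically maximal.

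For the remaining two conditions, since $L|K'$ is immediate one has $vL=vK'$ and $LP_v=K'P_v$, so it suffices to analyze the perfect hull. A direct computation gives $vK'=\bigcup_{n\geq 0}p^{-n}vK$ and $K'P_v=(KP_v)^{1/p^\infty}$: for any $a\in K^{\times}$ the element $a^{1/p^n}\in K'$ has value $p^{-n}va$, and if $va=0$ its residue is $(aP_v)^{1/p^n}$; conversely, every $b\in K'$ satisfies $b^{p^n}\in K$ for some $n$, so $p^n vb\in vK$ and $(bP_v)^{p^n}\in KP_v$. Thus $vL$ is the $p$-divisible hull of $vK$, hence $p$-divisible, and $LP_v$ is the perfect hull of $KP_v$, hence perfect. (The equal-characteristic hypothesis $\chara K=\chara KP_v$ passes to $L$, since $\chara L=\chara K$ and $LP_v=K'P_v$ has the same characteristic as $KP_v$; this is what makes the extension of $v$ to $K'$ well-behaved and lets us use Lemma~\ref{tame} freely.)

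Having met all three requirements of condition~2) of Lemma~\ref{tame}, the implication 2)$\Rightarrow$1) of that lemma yields at once that $(L,v)$ is a tame field. There is no real obstacle here: the substantive content is packaged inside Lemma~\ref{tame}, and the corollary amounts to the routine verification that the perfect hull already guarantees $p$-divisible value group and perfect residue field, with algebraic maximality then supplied for free by passing to a maximal immediate algebraic extension.
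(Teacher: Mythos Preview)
Your proof is correct and follows essentially the paper's (implicit) approach: apply Lemma~\ref{tame} after checking that a maximal immediate algebraic extension of the perfect hull is algebraically maximal. The corollary's placement immediately after condition~3) suggests the slightly shorter route via that condition (an algebraic extension of the perfect field $K'$ is itself perfect, so one only needs algebraic maximality), but your verification via condition~2) is equally valid.
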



\pars
Assume that $\chara K=p>0$, and let $K^{1/p^{\infty}}$ denote the
perfect hull of $K$. There is a unique extension of every valuation $v$
from $K$ to $K^{1/p^{\infty}}$, which we will again denote by $v$. The
value group $vK^{1/p^{\infty}}$ is the $p$-divisible hull of $vK$, and
the residue field $K^{1/p^{\infty}}P_v$ is the perfect hull of $KP_v\,$.
But even if $\chara K\ne p$, Section~2.3 of [K5] shows that it is easy
to construct an algebraic extension $(K',v)$ such that
\sn
$\bullet$ \ $vK'=\frac{1}{p^{\infty}}vK$ (the $p$-divisible
hull of $vK$), and
\sn
$\bullet$ \ $K'P_v=(KP_v)^{1/p^{\infty}}$ (the perfect hull of $KP_v$).
\sn
If these two assertions hold, then by Lemma~\ref{tame}, every maximal
immediate algebraic extension $(L,v)$ of $(K',v)$ is a tame field. We
have proved:

\begin{proposition}                         \label{ete}
For every valued field $(K,v)$, there is an algebraic extension $(L,v)$
which is a tame field and satisfies
\begin{equation}                            \label{etevgrf}
vL\>=\>\frac{1}{p^{\infty}}vK \ \mbox{\ \ and\ \ } \
LP_v=(KP_v)^{1/p^{\infty}}\;.
\end{equation}
\end{proposition}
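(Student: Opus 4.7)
My plan is to produce $L$ in two stages: first construct an algebraic extension $(K',v)$ of $(K,v)$ realizing the desired value group and residue field, then enlarge $(K',v)$ to a maximal immediate algebraic extension $(L,v)$, and finally invoke Lemma \ref{tame} via its condition 2) to conclude that $(L,v)$ is tame.

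For the first stage I split by characteristic. If $\chara KP_v=0$, then by the paper's convention $p=1$, so $\frac{1}{p^\infty}vK=vK$ and $(KP_v)^{1/p^\infty}=KP_v$; hence $K'=K$ works. If $\chara K=p>0$, I take $K'=K^{1/p^\infty}$, the perfect hull of $K$. As recalled in the paragraph preceding the proposition, $v$ has a unique extension to $K'$, with $vK'=\frac{1}{p^\infty}vK$ and $K'P_v=(KP_v)^{1/p^\infty}$. In the remaining mixed-characteristic case $\chara K=0<\chara KP_v=p$, the construction from Section~2.3 of [K5] supplies an algebraic extension $K'$ with the same two properties; this is the one nonformal ingredient in the proof, and it is the main (albeit referenced) obstacle.

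For the second stage, inside a fixed algebraic closure of $K'$ I apply Zorn's Lemma to the poset of immediate algebraic extensions of $(K',v)$ to obtain a maximal immediate algebraic extension $(L,v)$ of $(K',v)$. Then $L|K$ is algebraic, and because $L|K'$ is immediate we have
\[
vL \>=\> vK' \>=\> \frac{1}{p^\infty} vK
\quad\text{and}\quad
LP_v \>=\> K'P_v \>=\> (KP_v)^{1/p^\infty}\;,
\]
which is exactly (\ref{etevgrf}).

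It remains to verify that $(L,v)$ is tame. By construction $(L,v)$ is algebraically maximal. Its value group $vL=\frac{1}{p^\infty}vK$ is $p$-divisible, and its residue field $LP_v=(KP_v)^{1/p^\infty}$ is perfect, being a perfect hull. Thus condition 2) of Lemma \ref{tame} is satisfied, so $(L,v)$ is a tame field, completing the proof.
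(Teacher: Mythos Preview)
Your proof is correct and follows essentially the same route as the paper: construct an algebraic extension $(K',v)$ with the prescribed value group and residue field (via the perfect hull in equal positive characteristic, via Section~2.3 of [K5] in mixed characteristic, and trivially when $p=1$), then pass to a maximal immediate algebraic extension and apply condition~2) of Lemma~\ref{tame}. Your version is slightly more explicit about the case split and the invocation of Zorn's Lemma, but there is no substantive difference.
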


\pars
The following is a crucial lemma in the theory of tame fields. It was
proved in [K1] (the proof will be published in [K2], [K7]).
\begin{lemma}                                     \label{trac}
Let $(L,v)$ be a tame field and $K\subset L$ a relatively algebraically
closed subfield. If in addition $LP_v|KP_v$ is an algebraic extension,
then $(K,v)$ is also a tame field and moreover, $vL/vK$ is torsion free
and $KP_v=LP_v$.
\end{lemma}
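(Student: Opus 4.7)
The plan is to verify for $(K,v)$ the three conditions of Lemma~\ref{tame}(2) --- henselianness, $p$-divisibility of $vK$, and perfectness of $KP_v$ --- together with algebraic maximality of $K$, while extracting the two ``moreover'' assertions as by-products.

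The henselianness of $K$ is essentially immediate: any henselization $K^h$ of $K$ lies inside the henselian field $L$, and since $K^h|K$ is algebraic, relative algebraic closedness forces $K^h=K$. The key step for the other three conditions is to show that $L$, and hence $K$, is closed under $p$-th roots. For $a\in L^\times$, reducing to the unit case via $p$-divisibility of $vL$, I would consider $\alpha:=a^{1/p}\in\tilde L$, noting $v\alpha=0\in vL$ and $\bar\alpha\in LP_v$ (using perfectness of $LP_v$). A norm argument combined with $p$-divisibility and torsion-freeness of $vL$ gives $vL(\alpha)=vL$; for the residues, a computation exploiting perfectness of $LP_v$ (via the ``deviation'' $\mu=(\alpha-\alpha_L)/\pi$, with $\alpha_L\in L$ lifting $\bar\alpha$ and $\pi\in L$ of the correct value, whose $p$-th power agrees with an element of $L$ up to higher-valuation terms, so $\bar\mu^p\in LP_v$ and perfectness yields $\bar\mu\in LP_v$) gives $L(\alpha)P_v=LP_v$. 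Hence $L(\alpha)|L$ is immediate, and algebraic maximality of $L$ forces $\alpha\in L$. Applied to $a\in K$, relative algebraic closedness places $a^{1/p}\in K$; choosing $a$ a unit yields perfectness of $KP_v$, and choosing $a$ of arbitrary value yields $p$-divisibility of $vK$.

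The equality $KP_v=LP_v$ now follows: for $\xi\in LP_v$ with minimal polynomial $\bar f\in KP_v[X]$ (separable, since $KP_v$ is perfect), a monic lift $f\in{\cal O}_K[X]$ admits, by Hensel's Lemma in the henselian $K$, a root $a\in K$ with $aP_v=\xi$. Torsion-freeness of $vL/vK$ reduces to prime cases: for $\ell\ne p$ with $\ell\alpha\in vK$ and $\alpha=vx$, writing $\ell\alpha=va$ makes $x^\ell/a$ a unit whose residue lies in $LP_v=KP_v$; after absorbing a unit $u\in K$, Hensel in $L$ (using $\gcd(\ell,p)=1$) extracts $z\in L$ with $z^\ell=x^\ell/(au)$ and $\bar z=1$, so $(x/z)^\ell=au\in K$ forces $x/z\in K$ by relative algebraic closedness, and $\alpha=v(x/z)\in vK$. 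The case $\ell=p$ follows from $p$-divisibility of $vK$ together with torsion-freeness of the ordered group $vL$.

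The main obstacle is algebraic maximality of $K$, which I would attack through Kaplansky's theory of pseudo-Cauchy sequences. Assume for contradiction that $K(\beta)|K$ is a proper immediate algebraic extension, with minimal polynomial $p\in K[X]$ irreducible of degree $\geq 2$; then $\beta$ is the pseudo-limit of a pseudo-Cauchy sequence $(a_\nu)\subseteq K$ of algebraic type with minimal polynomial $p$. Embed $K(\beta)\hookrightarrow\tilde L$ over $K$; the identity $p(\beta)=0$ together with the pseudo-convergence $a_\nu\to\beta$ forces $v(p(a_\nu))\to\infty$, so $(a_\nu)$ is of algebraic type also over $L$, with some minimal polynomial $q\in L[X]$ dividing $p$. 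Algebraic maximality of $L$ then supplies a pseudo-limit $\beta''\in L$ with $q(\beta'')=0$, hence $p(\beta'')=0$; relative algebraic closedness of $K$ in $L$ places $\beta''\in K$, contradicting the irreducibility of $p$ over $K$. Thus $K$ is algebraically maximal, and Lemma~\ref{tame} concludes that $(K,v)$ is a tame field.
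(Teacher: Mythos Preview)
The paper does not prove this lemma in-text; it cites the author's earlier work [K1] (to be published in [K2], [K7]). So there is no in-paper argument to compare against, and your attempt has to stand on its own.

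Your proof has a genuine gap in mixed characteristic. The claim that a tame field $L$ is closed under $p$-th roots is false when $\chara L=0$ and $\chara LP_v=p>0$. For a concrete counterexample, let $L$ be a maximal immediate algebraic extension of $\Q_p(\zeta_p,\,p^{1/p^{\infty}})$: its value group $\frac{1}{p-1}\Z[1/p]$ is $p$-divisible and its residue field $\F_p$ is perfect, so $L$ is tame by Lemma~\ref{tame}, and $\zeta_p\in L$. The unramified extension $M|L$ with residue field $\F_{p^p}$ is cyclic of degree~$p$, hence by Kummer theory $M=L(a^{1/p})$ for some $a\in L$; in particular $L(a^{1/p})P_v=\F_{p^p}\ne LP_v$, so the extension is \emph{not} immediate and $a^{1/p}\notin L$. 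Your deviation argument breaks exactly at ``whose $p$-th power agrees with an element of $L$ up to higher-valuation terms'': in characteristic~$0$ the binomial cross-terms in $(\alpha-\alpha_L)^p$ carry a factor~$p$, and nothing forces $vp$ to dominate $p\cdot v(\alpha-\alpha_L)$, so you cannot conclude $\bar\mu^{\,p}\in LP_v$. Since you derive both the $p$-divisibility of $vK$ and the perfectness of $KP_v$ from closure under $p$-th roots, and then feed perfectness of $KP_v$ into the Hensel step for $KP_v=LP_v$, the gap propagates through most of the proof. (In equal characteristic~$p$ your proof does go through, and more easily than you suggest: $L$ is perfect because it is tame, so closure under $p$-th roots is immediate and the deviation computation is unnecessary.)

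Your pseudo-Cauchy argument for algebraic maximality of $K$ is essentially sound and independent of the flawed step, but two points need tightening. First, the minimal polynomial of the sequence $(a_\nu)$ over $K$ need not equal the minimal polynomial of the chosen $\beta$; you should start directly from a pseudo-Cauchy sequence of algebraic type without pseudo-limit in~$K$ and let your polynomial be \emph{its} minimal polynomial. Second, the divisibility $q\mid p$ in $L[X]$ (overloading $p$ for the polynomial, as you do) is not automatic: write $p=qs+r$ with $\deg r<\deg q$; then $v(r(a_\nu))$ is eventually constant while $v(q(a_\nu)s(a_\nu))$ is eventually increasing, so the fact that $v(p(a_\nu))$ is eventually increasing forces $r=0$.
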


%
%
\section{Proof of the main theorems}
%
%
%
\subsection{Proof of Theorem~\ref{MKP}}           \label{sectpr1}  %
%
Assume that $F|K$ is an algebraic function field of transcendence degree
$n$, $\Pzero$ a place on $K$, $Q\in S(F|K\,;\,\Pzero)$, and $a_1,\ldots,
a_m\in F$. We set
\[d\>:=\> \dim Q\mbox{ \ \ and \ \ } r\>:=\>\mbox{\rm rr}\,Q\;.\]
Then we choose $y_1,\ldots,y_d\in F$ such that $y_1Q,\ldots, y_dQ$ form
a transcendence basis of $FQ|K\Pzero\,$. Further, we choose $x_1,\ldots,
x_r\in F$ such that the values $v_Q x_1,\ldots,v_Qx_r$ form a maximal set
of rationally independent elements in $v_QF$ modulo $v_{\Pzero}K$.
According to Lemma~\ref{prelBour}, the elements $x_1,\ldots,x_r,
y_1,\ldots,y_d$ are algebraically independent over $K$. We take $K_0$ to
be the rational function field $K(x_1,\ldots,x_r,y_1,\ldots,y_d)$.

By Proposition~\ref{ete} we choose an algebraic extension
$(L,Q)$ of $(F,Q)$ which is a tame field and satisfies (\ref{etevgrf}),
for $F$ in the place of $K$.
%
%
By construction of $K_0\,$, we have that $v_Q L/v_QK_0$ is a
torsion group and $LQ|K_0Q$ is algebraic.

Let $K'$ be the relative algebraic closure of $K_0$ in $L$, and let
$Q'$ be the restriction of $Q$ to $K'$. According to Lemma~\ref{trac},
$(K',Q')$ is a tame field with
\begin{equation}                            \label{imme}
K'Q' = LQ \mbox{\ \ and\ \ } v_{Q'} K' = v_Q L\;.
\end{equation}
Hence $(K',Q')$ is existentially closed in $(L,Q)$ by
Theorem~\ref{tameAKE}.

Since the tame field $K'$ is perfect, the algebraic function field
$K'.F|K'$ is separably generated. Therefore, we can write $K'.F=
K'(t_1,\ldots,t_k, y)$, where $k = n - (d+r)$, the elements
$t_1,\ldots,t_k$ are algebraically independent over $K'$, and $y$ is
separable algebraic over $K'(t_1,\ldots,t_k)$. Let $f\in
K'[t_1,\ldots,t_k,Y]$ be the irreducible polynomial of $y$ over
$K'[t_1,\ldots,t_k]$. For $\mbb{t}=(t_1,\ldots,t_k)$, we then have
\[f(\mbb{t},y)=0\;\;\;\mbox{\ \ and\ \ }\;\;\;
\frac{\displaystyle\partial f}{\displaystyle\partial Y}
(\mbb{t},y)\not= 0\;.\]
In view of (\ref{imme}), we can choose $a'_1,\ldots,a'_m\in K'$ such
that
\[a'_iQ \>=\> a_iQ \mbox{ \ \ and \ \ } v_Q a'_i \>=\> v_Q a_i
\mbox{ \ \ for \ \ } 1\leq i\leq m\;.\]
We write the elements $a_i$ as follows:
\[a_i = \frac{g_i(\mbb{t},y)}{h_i(\mbb{t})}
\mbox{\ \ \ for\ \ \ } 1\leq i \leq m\;,\]
where $g_i$ and $h_i$ are polynomials over $K'$, with $h_i(\mbb{t})\ne
0$. Since $(K',Q')$ is existentially closed in $(L,Q)$, there exist
elements
\[t'_1\,,\ldots,\,t'_k\,,\, y' \in K'\]
such that
\[\begin{array}{ll}
   (i) & f(\mbb{t}',y') = 0 \mbox{\ \ and\ \ }
         \frac{\displaystyle\partial f}{\displaystyle\partial Y}
         (\mbb{t}',y')\not= 0\;,\\[0.3cm]
   (ii) & h_i(\mbb{t}') \not= 0
         \mbox{\ \ for\ \ } 1\leq i \leq m\;,\\[0.3cm]
   (iii) & \frac{\displaystyle g_i(\mbb{t}',y')}
         {\displaystyle h_i(\mbb{t}')}Q = a'_iQ \mbox{\ \ and\ \ }
         v_Q\,\frac{\displaystyle g_i(\mbb{t}',y')}
         {\displaystyle h_i(\mbb{t}')} = v_Q a'_i
         \mbox{\ \ for\ \ }1\leq i \leq m\;,
\end{array}\]
since these assertions are true in $L$ for $\mbb{t},y$ in the place
of $\mbb{t}',y'$.

\par\smallskip
Now let $K_1$ be the subfield of $K'$ which is generated over $K$ by the
following elements:\sn
$\bullet \ \ x_1,\ldots,x_r,y_1,\ldots,y_d\,$,\n
$\bullet \ \ a'_1,\ldots,a'_m, t'_1,\ldots,t'_k, y'\,$,\n
$\bullet \ \ $the coefficients of $f$, $g_i$ and $h_i$ for $1\leq i
\leq m$.

\sn
Let $P_1$ denote the restriction of $Q$ to $K_1$. We note that
$K_1$ is a finite extension of $K_0$. Hence according to
Corollary~\ref{fingentb}, $v_{P_1} K_1$ is a subgroup of $v_Q L$,
finitely generated over $v_{\Pzero}K$ with $v_{P_1} K_1/v_{\Pzero}K$ of
rational rank $r$. Similarly, $K_1P_1$ is a subfield of $LQ$ and
finitely generated of transcendence degree $d$ over $K\Pzero\,$.

\pars
At this point we may forget about the field $L$ and its place $Q$.
Starting from $(K_1,P_1)$ we will construct some henselian extension
of $(K_1,P_1)$ which will contain an isomorphic copy of $F$. The
construction will be done in such a way that the restriction of the
place to the embedded copy of $F$ will satisfy the assertion of the
theorem.

\pars
Choose $d_1$ and $r_1$ as in the assumption of Theorem~\ref{MKP}.
We take $d_1-d$ elements $y_{d+1},\ldots,y_{d_1}$, algebraically
independent over $K_1\,$, and set $K_2:=K_1(y_{d+1},\ldots,y_{d_1})$.
We extend $P_1$ to a place $P_2$ on $K_2$ such that the value group
does not change and the residue field $K_2P_2$ becomes a purely
transcendental extension of $K_1P_1$ of transcendence degree $d_1-d$.
This can be done by an application of Lemma~\ref{prelBour}.

\pars
Next we adjoin $r_1-r$ elements $x_{r+1},\ldots,x_{r_1}\,$,
algebraically independent over $K_2\,$. We assume that an arbitrary
ordering on $\frac{1}{p^{\infty}}v_Q F\oplus \bigoplus_{r_1-r}\Z$ has
been fixed. We take $\alpha_1,\ldots,\alpha_{r_1-r}$ to be generators of
that group over $v_{P_2}K_2=v_Q K_1\,$; then $\alpha_1,\ldots,
\alpha_{r_1-r}$ are rationally independent over $\frac{1}{p^{\infty}}v_Q
F$. By Lemma~\ref{prelBour}, there is an extension $P_3$ of $P_2$ to
$K_3:= K_2(x_{r+1},\ldots,x_{r_1})$ such that $v_{P_3}x_{r+i}=
\alpha_i\,$ for $1\leq i\leq r_1-r$, with $v_{P_3}K_3= v_{P_2}K\oplus
\bigoplus_{r_1-r} \Z\subseteq \frac{1}{p^{\infty}}v_Q F\oplus
\bigoplus_{r_1-r} \Z$ and $K_3P_3=K_2P_2$.

\pars
By construction, $(K_3,P_3)$ satisfies:\sn
$\bullet$ \ $\trdeg K_3|K\>=\> d+r+(r_1-r)+(d_1-d)\>=\>d_1+r_1\,$,\n
$\bullet$ \ $K_3P_3|K\Pzero$ is finitely generated, with
$\trdeg K_3P_3|K\Pzero= d+(d_1-d)\>=\>d_1\,$,\n
$\bullet$ \ $v_{P_3}K_3/v_{\Pzero}K$ is finitely generated, with
$\mbox{rr} \ v_{P_3}K_3/v_{\Pzero}K= r+(r_1-r)\>=\>r_1\,$.\sn

\pars
If $d_1+r_1\geq 1$, then $\trdeg K_3|K \geq 1$, and
Proposition~\ref{trdegmi} shows that $(K_3,P_3)$ admits an immediate
extension of transcendence degree $n-(d_1+r_1)$. If $d_1+r_1=0$, then
our construction yields $K_3=K_1$ which is a finite extension of
$K_0=K$. In this case, the existence of such an immediate extension is
guaranteed by the additional assumption at the end of our theorem. Now
we pick any transcendence basis of this extension and take $K_4$ to be
the subextension which it generates over $K_3$. Restricting the place to
the so obtained field, we get an immediate extension $(K_4,P_4)$ of
$(K_3,P_3)$.

\pars
Now we take $(K_5,P_5)$ to be the henselization of $(K_4,P_4)$. It
remains to show that $F$ can be embedded in $K_5$ over $K$. Then $P_5$
will induce a place $P$ on $F$ which satisfies the assertions of our
theorem. In fact, we find an embedding of $K_1 .F$ over $K_1$ in $K_5$
as follows.

\pars
We choose elements $t_1^*,\ldots,t_k^*\in K_5$, algebraically
independent over $K_1$, so close to $t'_1,\ldots,t'_k$ that by the
Implicit Function Theorem (which holds in every henselian field,
cf.\ [P--Z], Theorem 7.4) we can find $y^*\in K_5$ satisfying
$f(\mbb{t}^*,y^*)=0$ and being so close to $y'$ that in addition,
(ii) and (iii) hold for $\mbb{t}^*,y^*$ in the place of $\mbb{t}',y'$
and $P_5$ in the place of $Q$. Since $\mbb{t}',y'$ satisfy (ii) and
(iii), and these conditions define an open set in the valuation
topology, such elements $t_1^*,\ldots,t_k^*,y^*$ can be found in
$K_5\,$. The fact that $t_1^*,\ldots,t_k^*$ can even be chosen to be
algebraically independent over $K_1$ follows from the choice of the
transcendence degree of $K_5$ over $K_1$ (which is
$(d_1-d)+(r_1-r)+n-(d_1+r_1)= n-(d+r)=k\,$), and the fact that for any
intermediate field $K_1\subset K'_1\subsetuneq K_5$ which is relatively
algebraically closed in $K_5$, the elements of $K_5\setminus K'_1$ lie
dense in $K_5\,$. Applying this fact inductively yields the result.

\pars
Now $t_i\mapsto t_i^*$ ($1\leq i\leq k$) and $y\mapsto y^*$ defines an
embedding of $K_1.F$ over $K_1$ in $K_5\,$. We identify $F$ with its
image in $K_5$ and take $P$ to be the restriction of $P$ to $F$. By
construction, $K_4(\mbb{t}^*,y^*)$ is a finite algebraic extension of
$F$, having the purely transcendental extension $K_4(\mbb{t}^*,y^*)P_5=
K_3P_3$ of $K_1P_1$ of transcendence degree $d_1-d$ as its residue
field, and the $(r_1-r)$-extension $v_{P_5}K_4(\mbb{t}^*,y^*)=v_{P_3}
K_3$ of $v_{P_1}K_1$ as its value group. Further, it follows that
$[K_3P_3:FP]$ is finite and therefore, $FP|K\Pzero$ is finitely
generated of transcendence degree $d_1\,$. It also follows that
$(v_{P_3}K_3:v_PF)$ is finite and therefore, $v_PF/v_{\Pzero}K$ is
finitely generated of rational rank $r_1\,$. Thus, $FP$ and $v_PF$
satisfy conditions (a) and (b) of the theorem.


\pars
Finally, we have to check the conditions on the elements
$a_i\,$. After identifying $K_1.F$ with its image in $K_5$, we have that
\[a_i\>=\>\frac{g_i(\mbb{t}^*,y^*)}{h_i(\mbb{t}^*)}\;.\]
Now the result follows from assertion (iii) (with $P_5$ replaced by $Q$)
for $\mbb{t}^*,y^*$, together with $a'_iP_5=a'_iQ=a_iQ$ and $v_{P_5}
a'_i= v_Qa'_i=v_Qa_i$ ($1\leq i\leq m$).                       \QED

%
%
\subsection{Proof of Theorem~\ref{MKPZ}}           \label{sectpr2}
%
For the {\bf proof of Theorem~\ref{MKPZ}}, we modify the above proof in
the following way. We replace $(L,Q)$ by a maximal algebraic extension
still having $F^{1/p^{\infty}}Q=(FQ)^{1/p^{\infty}}$ as its residue
field. Such an extension will have a divisible value group (cf.\
Section~2.3 of [K5]). The new $(L,Q)$ is again a tame field, by
Lemma~\ref{tame}.

Let us first assume that $\Pzero$ is trivial. Then we take $K'$ to be the
relative algebraic closure of $K(x_1,y_1,\ldots,y_d)$ in $L$, and $Q'$
the restriction of $Q$. By Lemma~\ref{trac}, $(K',Q')$ is a tame field
with $K'Q'=LQ= (FQ)^{1/p^{\infty}}$ and $v_{Q'}K'=\Q vx_1\,$. We choose
$a'_1,\ldots,a'_m\in K'$ such that
\[a'_iQ' \>=\> a_iQ \mbox{ \ \ for \ \ } 1\leq i\leq m\;.\]
As $v_{Q'}K'$ may be smaller than $v_Q L$, it may not be possible to
choose the $a'_i$ such that also $v_{Q'}a'_i=v_Qa_i\,$. Since a
divisible ordered abelian group is existentially closed in every ordered
abelian group extension, we can again apply Theorem~\ref{tameAKE}. But
we have to replace (iii) by
\[(iii) \ \ \frac{\displaystyle g_i(\mbb{t}',y')} {\displaystyle
h_i(\mbb{t}')}Q = a'_iQ \mbox{\ \ for\ \ }1\leq i \leq m\;.\]
Therefore, we cannot preserve information about the values $v_Qa_i\,$.
On the other hand, we gain the freedom to extend the value group of
$(K_1,P_1)$ (which is a finite extension of $K(x_1,y_1,\ldots,y_d)$ and
thus has value group isomorphic to $\Z$) by $r_1-1$ new copies of
$\Z$, where $r_1$ can be chosen freely between $1$ and $n-d_1\,$.

\parm
In the case of $\Pzero$ being non-trivial, we take $K'$ to be the
relative algebraic closure of $K(y_1,\ldots,y_d)$ in $L$ and proceed as
above. In this case, the value group of $(K_1,P_1)$ is a finite
extension of $v_{\Pzero}K$. In the subsequent construction, we extend it
by $r_1$ new copies of $\Z$, where $r_1$ can be chosen freely between
$0$ and $n-d_1\,$.                                       \QED

%
%
\subsection{Proof of Theorem~\ref{MTdropd}}            \label{sectpr3}
In view of Theorem~\ref{MKP}, we only have to show that there is a
zero-dimensional place $P$ which satisfies (\ref{valeq}) and for which
$v_P F$ is equal to the group $\Gamma$ which is described in assertion
(b) of Theorem~\ref{MTdropd}, and is finitely generated over
$v_{\Pzero}K$. First, we use Theorem~\ref{MKP} to find a place $Q_1$
such that
\sn
-- \ $v_{Q_1}a_i=v_Qa_i$ for $1\leq i \leq m$,
\n
-- \ $v_{Q_1}F$ is a subgroup of the $p$-divisible
hull of $v_Q F$, finitely generated over $v_{\Pzero}K$,
\n
-- \ $FQ_1|K\Pzero$ is finitely generated.
\sn
Now we have to construct a zero-dimensional place from $Q_1\,$. The idea
is to find a zero-dimensional place on the function field $FQ_1|K\Pzero$
and then to compose it with $Q_1$.

We take $K'$ to be the algebraic closure of $K\Pzero$ in the algebraic
closure of $FQ_1$. Since $K'$ is algebraically closed, it is
existentially closed in $F':=K'.(FQ_1)\,$. Hence, we can apply
Theorem~\ref{exrp} to the function field $F'|K'$. This gives us a
$K'$-rational place $Q'_2$ of $K'.(FQ_1)$. Its restriction to $FQ_1$ is
a zero-dimensional place of $FQ_1|K\Pzero\,$. We use Theorem~\ref{MKPZ}
to change it to a discrete zero-dimensional place $Q_2$ of
$FQ_1|K\Pzero\,$. Now $Q_1Q_2$ is indeed a zero-dimensional place
of~$F$. Its value group $v_{Q_1Q_2}F$ contains $v_{Q_2}FQ_1=\Z$ as a
convex subgroup, and $v_{Q_1Q_2}F/v_{Q_2}FQ_1$ is isomorphic to
$v_{Q_1}F$. We set $P=Q_1Q_2\,$, so $\Gamma:=v_PF=v_{Q_1Q_2}F$ is
as described in assertion (b) of Theorem~\ref{MTdropd}. Since $Q_2$ is
trivial on $K\Pzero$, we have that $v_{Q_1Q_2}K =v_{\Pzero}K$ and that
$v_PF/v_{\Pzero}K=v_{Q_1Q_2}F/v_{Q_1Q_2}K$ still has convex subgroup
$v_{Q_2}FQ_1=\Z$ such that the quotient is $v_{Q_1}F$. Hence, also
$v_PF/v_{\Pzero}K$ is finitely generated.

However, we have to be more careful in order to satisfy the condition on
the values. We choose a $\Z$-basis $\gamma_1,\ldots,\gamma_\ell$ of the
group generated by the values $v_Qa_1\,,\ldots,v_Qa_m\,$, and elements
$b_1,\ldots,b_\ell\in F$ such that $v_{Q_1}b_i=\gamma_i\,$. Since these
values are rationally independent and since $v_{Q_1} F$ is a quotient of
$v_{Q_1Q_2} F$ by a convex subgroup, it follows that sending
$v_{Q_1}b_i$ to $v_{Q_1Q_2} b_i$ induces an order preserving embedding
$\iota$ of $\Gamma$ in $v_{Q_1Q_2} F$. We have to choose $Q_2$ in such a
way that $\iota v_{Q_1}a_i=v_{Q_1Q_2} a_i$ for $1\leq i \leq m$. By our
choice of the $\gamma_i$ there are integers $e_{i,j}$ such that
\[v_{Q_1}a_i\>=\>\sum_{j=1}^{\ell} e_{i,j} \gamma_j\>=\>
v\prod_{j=1}^{\ell} b_j^{e_{i,j}}\;,\]
whence
\[v_{Q_1} a'_i\>=\>0\;\;\;\mbox{ for }\;\;\;a'_i\>:=\> a_i^{-1}
\prod_{j=1}^{\ell} b_j^{e_{i,j}}\;.\]
That is, $a'_iQ_1\ne 0$, and by Theorem~\ref{exrp}, we can choose
$Q_2$ such that $a'_iQ_1Q_2\ne 0$ for $1\leq i \leq m$. That gives us
$v_{Q_1Q_2} a'_i=0$ and consequently,
\[v_{Q_1Q_2} a_i\>=\>v_{Q_1Q_2} \prod_{j=1}^{\ell} b_j^{e_{i,j}}
\>=\>\sum_{j=1}^{\ell} e_{i,j} v_{Q_1Q_2} b_j\>=\>
\sum_{j=1}^{\ell} e_{i,j} \iota v_{Q_1} b_j\>=\>
\iota \sum_{j=1}^{\ell} e_{i,j} v_{Q_1} b_j\>=\>\iota v_{Q_1} a_i\;.\]
This completes the proof of Theorem~\ref{MTdropd}.

%
%
\subsection{Proof of Theorem~\ref{densrplu}}\label{sectdrplu}
The proof is an adaptation of the last part of the proof of
Theorem~\ref{MKP}. We take a function field $F|K$ with a rational place
$Q$ which admits smooth local uniformization. Further, we take elements
$a_1,\ldots,a_m\in {\cal O}_Q\,$. Then there is an affine model of $F$
with coordinate ring $K[x_1,\ldots,x_k]$ such that $x_1,\ldots,x_k\in
{\cal O}_Q\,$, the point $(x_1Q,\ldots,x_kQ)$ is smooth and
$K$-rational, and $a_1,\ldots,a_m\in K[x_1,\ldots,x_k]$. Among the
elements $x_i$ we can choose a transcendence basis $t_1,\ldots,t_n$ and
can rewrite the original polynomial relations as polynomial relations
with polynomials $f_1,\ldots,f_\ell \in K[t_1,\ldots,t_n]
[Y_1,\ldots,Y_\ell]$, satisfied by the remaining $x_i$'s, which we now
call $y_1,\ldots,y_\ell\,$. These elements satisfy the hypothesis of the
multidimensional Hensel's Lemma, namely,
\[\left(\det\left(\frac{\partial f_i}{\partial Y_j}(y_1,\ldots,y_\ell)
\right)_{1\leq i\leq \ell\atop 1\leq j\leq\ell}\right)Q \>=\>
\det\left(\frac{\partial (f_iQ)}{\partial Y_j}(y_1 Q,\ldots,y_\ell Q)
\right)_{1\leq i\leq \ell\atop 1\leq j\leq\ell} \>\ne\>0\]
(cf.\ Sections 3 and 4 of [K4]). Here, $f_iQ$ denotes the polynomial
whose coefficients are obtained from the corresponding coefficients of
$f_i$ by an application of $Q$.

Now we take an arbitrary place $P'$ on a rational function field
$K(z_1,\ldots,z_n)$ with residue field $K$ and such that $z_1,\ldots,z_n
\in {\cal M}_{P'}\,$. In fact, we can choose $P'$ to be discrete (since
$K((z_1))$ is of infinite transcendence degree over $K(z_1)\,$), or of
maximal rank, or with any finitely generated value group of rational
rank $n$. (For all possible choices, see [K5].) We take $(F',P')$ to be
the henselization of $(K(z_1,\ldots,z_n),P')$.

The elements $t_1Q+z_1,\ldots,t_n Q+z_n$ are algebraically independent
over $K$, so $t_i\mapsto t_iQ+z_i$ induces an isomorphism from
$K(t_1,\ldots,t_n)$ onto $K(z_1,\ldots,z_n)$. This sends the polynomials
$f_i$ to polynomials $f_i^*$ with coefficients in $K[z_1,\ldots,z_n]$,
and these coefficients have the same images under $P'$ as the original
coefficients have under $Q$. Hence the new polynomials $f_i^*$ also
satisfy the hypothesis of the multidimensional Hensel's Lemma:
\[\det\left(\frac{\partial (f_i^*P')}{\partial Y_j}(y_1 Q,\ldots,
y_\ell Q) \right)_{1\leq i\leq \ell\atop 1\leq j\leq\ell} \>\ne\>0\]
with $(y_1 Q,\ldots,y_\ell Q)\in K^\ell\subseteq (F'P')^\ell$.
Therefore, by the
multidimensional Hensel's Lemma (which holds in every henselian field)
there is a common zero $(y'_1,\ldots,y'_\ell) \in (F')^\ell$ of the
$f_i^*$ such that $y'_iP'=y_iQ$, $1\leq i\leq\ell$. So the above
constructed isomorphism can be extended to an embedding of $F$ in $F'$.
We identify $F$ with its image and take $P$ to be the restriction of
$P'$ to $F$. It is discrete if $P'$ is, and of maximal rank if $P'$ is.
Since $t_iP= (t_iQ+z_i)P'=t_iQ$ and $y_iP= y'_iP'=y_iQ$, and since
$a_j\in K[t_1, \ldots,t_n,y_1,\ldots,y_\ell]$, we also find that
$a_jP=a_jQ$ for $1\leq j\leq m$. This proves our theorem.           \QED

%
%
\subsection{Proofs for Sections~\ref{sectlf}
and~\ref{sectrp=ec}}                             \label{sectlfp}
We start with the\n
{\bf Proof of Theorem~\ref{exrp}:} \ We adapt the proof of the lemma
on p.~190 of [K--P]. Assume that $K$ is existentially closed in $F$.
It is well known and easy to prove that this implies that $F|K$ is
separable. Pick a separating transcendence basis $x_1,\ldots,x_d$ and
$y\in F$ separable algebraic over $K(x_1,\ldots,x_d)$ such that
$F=K(x_1,\ldots,x_d,y)$. Let $f\in K[x_1,\ldots,x_d,Y]$ be the
irreducible polynomial of $y$ over $K[x_1,\ldots,x_d]$. We write
\[z_i = \frac{g_i(\mbb{x},y)}{h_i(\mbb{x})}
\mbox{\ \ \ for\ \ \ } 1\leq i \leq n\;,\]
where $g_i$ and $h_i$ are polynomials over $K$, with $h_i(\mbb{t})\ne
0$. Since $x_1,\ldots,x_d,y$ satisfy
\[f(\mbb{x},y)=0\;\;\;\mbox{\ \ and\ \ }\;\;\;
\frac{\displaystyle\partial f}{\displaystyle\partial Y}
(\mbb{x},y)\not= 0\;\;\;\mbox{\ \ and\ \ }\;\;\; h_i(\mbb{t})\ne 0
\;\;(1\leq i\leq n)\]
in $F$, we infer from $K$ being existentially closed in $F$ that there
are $a_1,\ldots,a_d,b$ in $K$ such that
\[f(\mbb{a},b)=0\;\;\;\mbox{\ \ and\ \ }\;\;\;
\frac{\displaystyle\partial f}{\displaystyle\partial Y}
(\mbb{a},b)\not= 0\;\;\;\mbox{\ \ and\ \ }\;\;\; h_i(\mbb{a})\ne 0
\;\;(1\leq i\leq n).\]
As in [K--P], we consider the field $K((X_1))\ldots((X_d))$ and embed
$F$ in $L$ in such a way that $x_i$ is sent to $X_i+a_i\,$. This induces
a $K$-rational place $P$ on $F$ such that $x_iP=a_i\,$. It follows that
$z_iP\ne\infty$ $(1\leq i\leq n)$.

Now suppose that we have already $K$-rational places $P_1,\ldots,P_k$
which are finite on $z_1,\ldots,z_n\,$. Define
$z_{n+j}:=(x_1-x_1P_j)^{-1}\in F$ for $1\leq j\leq k$. By the above
there exists a place $P$ which is finite on
$z_1,\ldots,z_{n+k}\,$. It follows that $x_1P\ne x_1P_j$ and hence
$P\ne P_j$ for $1\leq j\leq k$. This shows that there are infinitely
many $K$-rational places which are finite on $z_1,\ldots,z_n\,$.    \QED

\pars
For the case of one variable, we extend Theorem~\ref{exrp} as
follows:
\begin{proposition}                         \label{infmany}
Let $F|K$ be a function field in one variable. Then $K$ is
existentially closed in $F$ if and only if $F$ admits infinitely many
$K$-rational places.
\end{proposition}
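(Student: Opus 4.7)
The ``only if'' direction is immediate from Theorem~\ref{exrp}: if $K$ is existentially closed in $F$, then (applied with no constraint elements, or with any chosen $z_i$'s) $F|K$ admits infinitely many rational places of maximal rank. Since $\trdeg F|K = 1$, maximal rank means discrete rank one, so we obtain infinitely many $K$-rational places.

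For the converse, assume that $F|K$ admits infinitely many $K$-rational places, and take polynomials $f_1,\ldots,f_n, g\in K[X_1,\ldots,X_m]$ together with a common zero $(c_1,\ldots,c_m)\in F^m$ of the $f_i$ with $g(c_1,\ldots,c_m)\ne 0$. We must exhibit such a common zero in $K^m$. The plan is to evaluate at a suitable $K$-rational place of $F$.

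The key point is a standard fact from the theory of function fields in one variable: every nonzero element of $F$ has only finitely many poles among the places of $F|K$. Hence the finitely many elements $c_1,\ldots,c_m$ together with $g(c_1,\ldots,c_m)^{-1}\in F$ all lie in the valuation ring ${\cal O}_P$ for all but finitely many places $P\in S(F|K)$. Since by assumption there are infinitely many $K$-rational places, we can pick a $K$-rational place $P$ avoiding these finitely many exceptional places.

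Applying $P$ componentwise gives $(c_1 P,\ldots,c_m P)\in K^m$. Since the $f_i$ have coefficients in $K$ and $P$ is a homomorphism on ${\cal O}_P$, we have $f_i(c_1 P,\ldots,c_m P) = f_i(c_1,\ldots,c_m)P = 0$; similarly $g(c_1 P,\ldots,c_m P) = g(c_1,\ldots,c_m)P$, which is nonzero precisely because we arranged that $g(c_1,\ldots,c_m)^{-1}\in{\cal O}_P$. This is the required common zero in $K^m$, so $K$ is existentially closed in $F$. The only step specific to the one-variable setting is the ``finitely many poles'' fact; in higher transcendence degree this is precisely what fails, which is why we had to work much harder for Theorem~\ref{exrp}.
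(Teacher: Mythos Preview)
Your proof is correct and follows essentially the same approach as the paper's: both directions match, and for the converse you use the same key observation that any element of $F$ lies in ${\cal O}_P$ for all but finitely many places of $F|K$. The paper justifies this inline (via the unique pole on $K(a_i)$ and finiteness of $F|K(a_i)$) rather than citing it as a standard fact, and it handles only equations (implicitly absorbing $g\ne 0$ via the Rabinowitsch trick) whereas you treat $g$ directly by throwing $g(c)^{-1}$ into the list of elements that must stay finite --- but these are cosmetic differences, not substantive ones.
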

\begin{proof}
If $K$ is existentially closed in $F$, then Theorem~\ref{exrp}
shows that $F$ admits infinitely many $K$-rational places.

For the converse, assume that $F$ admits infinitely many $K$-rational
places. Suppose that $f_1,\ldots,f_n\in K[X_1,\ldots,X_m]$ have a common
zero $(a_1,\ldots, a_m)\in F^m$. If $P$ is a $K$-rational place of $F$
such that $a_iP\ne \infty$ for $1\leq i\leq m$, then $(a_1P,\ldots,a_mP)
\in K^m$ is a common zero of $f_1,\ldots,f_n$ since
$f_j(a_1P,\ldots,a_mP)=f_j (a_1,\ldots,a_m)P=0P=0$ for $1\leq j\leq n$.
Now it suffices to show that there are only finitely many $K$-rational
places $P$ of $F$ for which $a_iP=\infty$ for some $i$. But this is
clear because $a_i\mapsto \infty$ defines a unique place on $K(a_i)$
(namely, the $a_i^{-1}$-adic place), and since $F|K(a_i)$ is a finite
extension (as $a_i$ must be transcendental over $K$), this place has
only finitely many extensions to $F$.
\end{proof}

\mn
{\bf Proof of Theorem~\ref{lfecKt}:}
\sn
1) $\Leftrightarrow$ 2): \ By the foregoing proposition, (LF$''$) is
equivalent to 2).

\mn
2) $\Rightarrow$ 3): \ Since $K(t)^h$ admits a $K$-rational place, so
does every function field $F|K$ which is contained in $K(t)^h$. Every
such function field $F|K$ is a function field in one variable. Hence
by 2), $K$ is existentially closed in $F$. It follows that $K$ is
existentially closed in $K(t)^h$.

\mn
3) $\Rightarrow$ 4): \ If $K$ is existentially closed in $K(t)^h$, then
$K$ is existentially closed in $K((t))$, since this property is
transitive and the following holds:
\begin{theorem}
The field $K(t)^h$ is existentially closed in $K((t))$.
\end{theorem}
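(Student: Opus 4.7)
My plan is to combine the tame Ax-Kochen-Ershov theorem (Theorem~\ref{tameAKE}) with a henselian approximation argument in the spirit of the last part of the proof of Theorem~\ref{MKP}.

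Translating the statement, I must show that if polynomials $f_1,\ldots,f_n,g\in K(t)^h[X_1,\ldots,X_m]$ admit a common zero $\bar b\in K((t))^m$ of the $f_i$ satisfying $g(\bar b)\ne 0$, then they admit such a zero in $(K(t)^h)^m$. The first step is to pass to tame overfields: by Proposition~\ref{ete}, pick an algebraic tame extension $(M,v_t)$ of $(K((t)),v_t)$ with value group $\frac{1}{p^{\infty}}\Z$ and residue field $K^{1/p^{\infty}}$, and let $M_0$ denote the relative algebraic closure inside $M$ of the perfect hull of $K(t)^h$. Lemma~\ref{trac} then gives that $(M_0,v_t)$ is itself tame, with $vM_0=vM$ and $M_0P_{v_t}=MP_{v_t}$; since these extensions of value group and residue field between $M_0$ and $M$ are trivial and so existentially closed, Theorem~\ref{tameAKE} yields that $M_0$ is existentially closed in $M$, and hence our system already has a solution in $M_0^m$.

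The main obstacle is the final descent from $M_0$ down to $K(t)^h$, as $M_0$ is typically a strict algebraic extension (it contains, for example, all $p$-power roots of $t$ in positive residue characteristic) and $K(t)^h$ is not $v_t$-dense in $M_0$. The trick, imported from the closing paragraphs of the proof of Theorem~\ref{MKP}, is to use the original solution $\bar b \in K((t))^m$ rather than the $M_0$-solution as the target of approximation. Concretely, I would express the $M_0$-solution through a finite tower of separable polynomial relations over $K(t)^h$ whose Jacobian is nondegenerate at the solution (such a presentation exists because the extension $M_0/K(t)^h$ is tame, hence separable, with controlled value-group and residue-field behaviour), approximate the base data of this tower by $K(t)^h$-elements via the density of $K(t)^h$ in $K((t))$, and then invoke the multidimensional Hensel's Lemma in the henselian field $K(t)^h$ (cf.\ [P--Z]) to correct the approximation to an exact $K(t)^h$-solution. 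The open inequation $g\ne 0$ is automatically preserved by taking the approximation sufficiently close in the $t$-adic topology.

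The delicate point, and the reason the intermediate tame overfield $M_0$ is really needed rather than working directly with $K(t)^h \subset K((t))$, is that the original $K((t))$-solution $\bar b$ may lie at a singular point of the $f_i$-variety, which would preclude a naive Hensel-lift from a density approximation. The tame algebraic data provided by $M_0/K(t)^h$ is precisely what supplies the additional separable polynomial relations with nondegenerate Jacobian that make the henselian descent go through; carrying out this construction explicitly (parallel to Section~\ref{sectpr1}) constitutes the bulk of the argument.
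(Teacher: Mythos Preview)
First, a contextual note: the paper does not actually prove this theorem. It simply records that the result follows from Theorem~2 in [ER] and that another proof will appear in [K2]. So there is no ``paper's own proof'' to compare against in detail; I can only assess whether your argument stands on its own.

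Your first phase is fine. Passing to a tame algebraic extension $M$ of $K((t))$ via Proposition~\ref{ete}, taking $M_0$ to be the relative algebraic closure of $(K(t)^h)^{1/p^\infty}$ in $M$, invoking Lemma~\ref{trac} to see that $(M_0,v_t)$ is tame with $vM_0=vM$ and $M_0P_{v_t}=MP_{v_t}$, and then applying Theorem~\ref{tameAKE} to conclude $M_0\prec_\exists M$ --- all of this is correct and gives you a solution of your system in $M_0$.

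The descent from $M_0$ to $K(t)^h$ is where the argument breaks. You assert that $M_0/K(t)^h$ is ``tame, hence separable'' and can therefore be presented by separable polynomial relations with nondegenerate Jacobian. This conflates two different notions. The field $(M_0,v_t)$ is a \emph{tame field} in the absolute sense of the paper, but the \emph{extension} $M_0/K(t)^h$ is not tamely ramified: its value-group extension $\frac{1}{p^\infty}\Z/\Z$ is a $p$-group, so the extension is as wild as possible. In positive characteristic $M_0$ contains $t^{1/p}$ and all of $(K(t)^h)^{1/p^\infty}$, so $M_0/K(t)^h$ is not separable, and no presentation with nondegenerate Jacobian over $K(t)^h$ exists. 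Your proposed Hensel-lift therefore has nothing to lift. Moreover, the density argument you sketch is circular: the ``base data'' you want to approximate already live in $K(t)^h$, not in $K((t))$, so density of $K(t)^h$ in $K((t))$ buys you nothing at this step.

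This gap is genuine and is exactly the hard part of the theorem in characteristic $p$. (In residue characteristic~$0$ the whole thing is immediate from Lemma~\ref{tamerc0} and Theorem~\ref{tameAKE}, since $K(t)^h$ is already tame and $K((t))/K(t)^h$ is immediate.) Getting from a solution in a wild algebraic extension of $K(t)^h$ back down to $K(t)^h$ itself is not something the tools assembled in this paper accomplish on their own; Ershov's argument and Kuhlmann's argument in [K2] each bring in additional machinery (of an Artin-approximation flavour, or a finer analysis of immediate extensions) that your outline does not supply.
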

\n
This result follows from Theorem 2 in [ER]. Another proof will be given
in [K2].

\mn
4) $\Rightarrow$ 5): \ Suppose that $L$ is an extension field of $K$
which admits a discrete $K$-rational place. Then it has a local
parameter $t$ and can be embedded over $K(t)$ in $K((t))$. Hence if $K$
is existentially closed in $K((t))$, then it is also existentially
closed in $L$.

\mn
5) $\Rightarrow$ 2) is trivial.                           \QED

\bn
{\bf Proof of Proposition~\ref{hvilarge}:} \ Let $v$ be the henselian
valuation on $K$. Take any $|K|^+$-saturated elementary extension
$(K^*,v^*)$ of $(K,v)$, where $|K|^+$ denotes the successor cardinal of
the cardinality of $K$. Then $(K^*,v^*)$ is henselian. Moreover,
$v^*K^*$ will contain an element $\alpha$ which is bigger than every
element in $vK$.

Now we consider $K(t)^h$ with the valuation $w:=v_t\circ v$ which is the
composition of the $t$-adic valuation $v_t$ and the valuation $v$ on its
residue field $K=K(t)^h P_{v_t}\,$. Since $v$ is henselian, $K(t)^h$ is
also the henselization of $K(t)$ with respect to $w$.

The value $w t$ is bigger than every element in $wK= vK$.
Hence, sending $w t$ to $\alpha$ induces an order preserving isomorphism
from $w K(t)=vK\oplus\Z w t$ onto $vK\oplus\Z\alpha$. Consequently,
sending $t$ to some element $t^* \in K^*$ with $v^*t^*=\alpha$ induces a
valuation preserving embedding of $K(t)$ in $K^*$ (apply
Lemma~\ref{prelBour}). By the universal property of henselizations, this
embedding extends to an embedding of $K(t)^h$ in the henselian field
$K^*$.

Since all existential sentences are preserved by this embedding, and
since $K$ is existentially closed in $K^*$, it now follows that $K$
is existentially closed in $K(t)^h$.                      \QED

\bn
{\bf Proof of Theorem~\ref{pcond}:} \ The implication 3) $\Rightarrow$
2) is trivial, and 2) $\Rightarrow$ 1) follows from Theorem~\ref{lfecKt}
since $K((t))=k((\Z))$. Hence it remains to show the implication 1)
$\Rightarrow$ 3).

We take any field extension $L$ of $K$ which admits a $K$-rational place
$P$. We extend $P$ to the perfect hull $L^{1/p^{\infty}}$ of $L$.
Since $L^{1/p^{\infty}}P=(LP)^{1/p^{\infty}}=K^{1/p^{\infty}}$ and $K$
is perfect by assumption, we find that $L^{1/p^{\infty}}P=K$. Now we
take $(L_1,P)$ to be a maximal immediate algebraic extension of
$(L^{1/p^{\infty}},P)$. Hence, we still have $L_1P=K$. By
Corollary~\ref{cortame}, $(L_1,P)$ is a tame field. By adjoining
suitable $n$-th roots of elements in $L_1\,$, we can further extend to a
valued field $(L_2,P)$ such that $v_P L_2$ is divisible and $L_2P=K$. By
Lemma~\ref{alget}, also $(L_2,P)$ is a tame field.

Now take any $x\in L_2$ which is transcendental over $K$ and let $L_0$
be the relative algebraic closure of $K(x)$ in $L_2\,$. Since $L_2P=K$,
$P$ must be non-trivial on $L_0\,$. We have $L_0P=K$ and by
Lemma~\ref{trac}, $(L_0,v_P)$ is a tame field and $v_PL_0$ is divisible.
Consequently, $v_PL_0$ is existentially closed in $v_PL_2\,$. Trivially,
$L_0P=K$ is existentially closed in $L_2P=K$. So we can employ
Theorem~\ref{tameAKE} to deduce that $L_0$ is existentially closed in
$L_2$.

Now it suffices to prove that $K$ is existentially closed in $L_0$
because then by transitivity, $K$ is existentially closed in $L_2$
and thus also in its subfield $L$. We only have to show that $K$ is
existentially closed in every subfield $F$ of $L_0$ which is finitely
generated over $K$. Since $L_0$ is an algebraic extension of $K(x)$
and $P$ is trivial on $K$, it follows that $P$ is discrete on $F$.
Since $K$ is large, we can now infer from Theorem~\ref{lfecKt} that $K$
is existentially closed in $F$. This completes our proof.      \QED

%
%
\section{Appendix}
Let ${\cal L}_0$ be a first order language, and ${\cal L}$ an extension
of ${\cal L}_0$ by new relation symbols. Take $M_0$ to be any
${\cal L}_0$-structure. Further, let ${\cal T}$ be a set of universal
${\cal L}$-sentences, and ${\cal A}$ a set of existential
${\cal L}$-sentences.

Now let $X$ be the set of ${\cal L}$-expansions $M$ of $M_0\,$ such
that $M\models {\cal T}$. On $X$, we consider the topology
${\cal X}_{\cal A}$ whose basic open sets are the sets of the form
\[\{M\in X\mid M\models {\cal A}'\}\>,\;\;\; {\cal A}'
\mbox{ a finite subset of } {\cal A}\;.\]
Replacing ${\cal A}$ by its closure under finite conjunctions if
necessary, we may assume that all basic open sets are of the form
\[X_{\varphi}:=\{M\in X\mid M\models \varphi\}\;,\]
where $\varphi\in {\cal A}$.

\begin{theorem}                             \label{qco}
$(X,{\cal X}_{\cal A})$ is quasi-compact.
\end{theorem}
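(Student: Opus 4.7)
\textbf{Proof plan for Theorem~\ref{qco}.} The plan is to apply Alexander's subbase theorem. Since the basic open sets $X_\varphi$ ($\varphi \in \mathcal{A}$) form a subbase, it suffices to show that every family $\{X_{\varphi_i}\}_{i \in I}$ of subbasic open sets which covers $X$ admits a finite subcover. Dually, I shall verify that if $\{\varphi_i\}_{i\in I}\subseteq \mathcal A$ is such that for every finite $I_0\subseteq I$ there is some $M\in X$ with $M\models\neg\varphi_i$ for all $i\in I_0$, then there exists a single $M\in X$ with $M\models\neg\varphi_i$ for all $i\in I$. Note that each $\neg\varphi_i$ is a universal $\mathcal L$-sentence, since each $\varphi_i$ is existential.

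The main tool is the compactness theorem of first order logic. Let $\mathcal L^\ast$ be the extension of $\mathcal L$ obtained by adjoining a new constant symbol $c_a$ for every element $a$ of the universe of $M_0$, and let $\mathrm{Diag}(M_0)$ denote the atomic diagram of $M_0$, i.e.\ the set of all atomic and negated atomic $\mathcal L_0^\ast$-sentences true in $M_0$ when $c_a$ is interpreted as $a$. Consider the $\mathcal L^\ast$-theory
\[
\mathcal{T}^\ast\>:=\>\mathrm{Diag}(M_0)\,\cup\,\mathcal{T}\,\cup\,\{\neg\varphi_i\mid i\in I\}\;.
\]
Any finite subset of $\mathcal{T}^\ast$ mentions only finitely many constants $c_{a_1},\ldots,c_{a_n}$ and only finitely many of the $\varphi_i$, say indexed by $I_0$. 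By the finite satisfiability hypothesis there is an $M\in X$ with $M\models\neg\varphi_i$ for $i\in I_0$; since $M$ is an $\mathcal L$-expansion of $M_0$, interpreting each $c_a$ as $a$ yields a model of this finite subset. Hence $\mathcal{T}^\ast$ is finitely satisfiable, so by compactness it has a model $N$.

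The final step is to cut $N$ down to an $\mathcal L$-expansion of $M_0$. Here I use crucially that $\mathcal L$ extends $\mathcal L_0$ only by new \emph{relation} symbols, so that the subset $M_0^N:=\{c_a^N\mid a\in M_0\}$ of $N$ is automatically closed under all function symbols of $\mathcal L$. Because $N\models\mathrm{Diag}(M_0)$, the map $a\mapsto c_a^N$ is an $\mathcal L_0$-isomorphism from $M_0$ onto $M_0^N$, so after identification the $\mathcal L$-substructure $M$ of $N$ with universe $M_0^N$ is an $\mathcal L$-expansion of $M_0$. Since universal sentences pass to substructures, $M\models\mathcal T$ (so $M\in X$) and $M\models\neg\varphi_i$ for every $i\in I$. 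This gives a point in $\bigcap_{i\in I}(X\setminus X_{\varphi_i})$, completing the argument via Alexander's subbase theorem.

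The main (and really only) obstacle is passing from the compactness-theorem model $N$ to an actual expansion of $M_0$; this is what forces both of the hypotheses of the setup to be used in an essential way, namely that $\mathcal T\cup\{\neg\varphi_i\}$ consists of universal sentences (so that it survives passage to a substructure) and that $\mathcal L$ enlarges $\mathcal L_0$ only by relation symbols (so that $M_0^N$ really is a substructure of $N$). Once these two ingredients are noted, the rest is a routine compactness argument.
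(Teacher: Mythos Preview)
Your proof is correct and follows essentially the same route as the paper's: reduce to showing that if no finite subfamily of $\{X_{\varphi_i}\}$ covers $X$ then the full family does not, apply the compactness theorem to $\mathcal{T}\cup\{\neg\varphi_i\}$ together with a theory pinning down $M_0$, and then cut the resulting model back to an $\mathcal{L}$-expansion of $M_0$ using that all sentences involved are universal and that $\mathcal{L}\setminus\mathcal{L}_0$ contains only relation symbols. The only cosmetic differences are that the paper uses the full elementary $\mathcal{L}_0(M_0)$-theory of $M_0$ where you use the atomic diagram (either suffices), and the paper first closes $\mathcal{A}$ under finite conjunctions to work with a base directly, whereas you invoke Alexander's subbase theorem.
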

\begin{proof}
Take a collection $\{X_{\varphi_i}\mid i\in I\}$ of
basic open sets, with $\varphi_i\in {\cal A}$. Assume that
\[X\;=\;\bigcup_{i\in J} X_{\varphi_i}\]
does not hold for any finite subset $J$ of $I$. Then we have to show
that it also does not hold for $J=I$.

By our assumption, for every finite subset $J$ of $I$ there is an
expansion $M_J\in X$ which is not in $\bigcup_{i\in J} X_{\varphi_i}\,$.
That is, $M_J\models\bigwedge_{i\in J}
\neg\varphi_i\,$. We take ${\cal T}_0$ to be the elementary
${\cal L}_0(M_0)$-theory of $M_0\,$, where ${\cal L}_0(M_0)$ is the
language obtained from ${\cal L}_0$ by adjoining a constant symbol for
every element of $M_0\,$. We see that for every finite subset $J$ of
$I$, the theory ${\cal T}_0\cup {\cal T}\cup \{\neg\varphi_i\mid i\in
J\}$ has a model, namely, $M_J\,$. By the semantical compactness theorem
of first order logic, we conclude that also the theory ${\cal T}_0\cup
{\cal T}\cup \{\neg\varphi_i\mid i\in I\}$ has a model $M^*$.

Since $M^*\models {\cal T}_0\,$, we know that $M_0$ is an elementary
substructure of the ${\cal L}_0$-reduct of $M^*$. Let us denote by
$M'$ the ${\cal L}$-structure which we obtain by restricting the new
relations of $M^*$ to the universe of $M_0\,$. Then $M_0$ is the
${\cal L}_0$-reduct of $M'$, that is, $M'$ is an ${\cal L}$-expansion
of $M_0\,$. Since ${\cal T}$ consists of universal sentences, we also
have that $M'\models {\cal T}$. Hence, $M'\in X$. But for all $i\in
I$, $M^*\models \neg\varphi_i$ and since $\neg\varphi_i$ is a universal
sentence, we also have that $M'\models \neg\varphi_i\,$.
Therefore,
\[M'\notin \bigcup_{i\in I} X_{\varphi_i}\;.\]
\end{proof}

Replacing ${\cal T}$ by ${\cal T}\cup\{\varphi\}$ for any quantifier free
sentence $\varphi\in {\cal A}$, we obtain:
\begin{corollary}
If $\varphi\in {\cal A}$ is quantifier free, then $X_{\varphi}$ is
quasi-compact. Hence if ${\cal A}$ consists only of quantifier free
sentences, then every basic open set is quasi-compact.
\end{corollary}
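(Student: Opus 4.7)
The plan is to deduce this corollary directly from Theorem~\ref{qco} by absorbing $\varphi$ into the universal theory. The key observation is that a quantifier free sentence is (vacuously) a universal ${\cal L}$-sentence; in particular, when $\varphi\in{\cal A}$ is quantifier free, the enlarged set ${\cal T}':={\cal T}\cup\{\varphi\}$ is still a set of universal ${\cal L}$-sentences, so the hypotheses of Theorem~\ref{qco} remain satisfied after this replacement.

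Next, I would verify that the space produced by Theorem~\ref{qco} applied to the triple $(M_0,{\cal T}',{\cal A})$ is exactly $X_\varphi$ with its subspace topology. Namely, the set of ${\cal L}$-expansions $M$ of $M_0$ satisfying ${\cal T}'$ is $\{M\in X\mid M\models\varphi\}=X_\varphi$, and the topology ${\cal X}'_{\cal A}$ generated on this set by the family ${\cal A}$ has basic open sets $\{M\in X_\varphi\mid M\models\psi\}=X_\psi\cap X_\varphi$ for $\psi\in{\cal A}$. These are precisely the basic open sets of the subspace topology inherited from $(X,{\cal X}_{\cal A})$. Hence Theorem~\ref{qco} gives that $X_\varphi$ is quasi-compact.

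For the second assertion, I would remark that a finite conjunction of quantifier free sentences is again quantifier free, so the closure of ${\cal A}$ under finite conjunctions -- which was used to arrange that every basic open set has the form $X_\varphi$ for a single $\varphi\in{\cal A}$ -- still consists entirely of quantifier free sentences. Consequently, when ${\cal A}$ consists only of quantifier free sentences, every basic open set falls under the scope of the first part and is quasi-compact. The argument is purely formal, and the only point requiring a moment's care is the topological identification in the second step; beyond that I anticipate no obstacle.
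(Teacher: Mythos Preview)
Your proposal is correct and follows exactly the approach indicated in the paper, which simply remarks that one replaces ${\cal T}$ by ${\cal T}\cup\{\varphi\}$ and reapplies Theorem~\ref{qco}. You have merely filled in the routine verifications (that the new space is $X_\varphi$ with the subspace topology, and that closure under finite conjunctions preserves quantifier-freeness) that the paper leaves implicit.
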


Now we consider the following conditions on ${\cal A}$:
\sn
(T$_0$) \quad \ for all $M_1,M_2\in X$, $M_1\ne M_2\,$, there is some
$\varphi\in {\cal A}$ such that\par
\qquad $M_1\notin X_{\varphi}\ni M_2$ or $M_2\notin X_{\varphi}\ni
M_1\,$.
\sn
(NEG) \ ${\cal A}$ is closed under negation.
\sn
Note: if ${\cal A}$ satisfies (NEG), then it consists solely of
quantifier free sentences.

\begin{proposition}                               \label{tdcH}
If ${\cal A}$ satisfies (T$_0$), then $(X,{\cal X}_{\cal A})$ is a
T$_0$-space. If in addition, ${\cal A}$ satisfies (NEG), then $(X,
{\cal X}_{\cal A})$ is a totally disconnected compact Hausdorff space
and its basic open sets are both open and closed.
\end{proposition}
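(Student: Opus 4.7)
My plan is to handle the two halves of the proposition separately, since the first (T$_0$) is essentially a direct translation of the hypothesis, while the second uses (NEG) to upgrade the basic open sets to clopen sets, from which Hausdorffness, total disconnectedness and compactness all follow cleanly.

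For the first assertion, I would simply unwind the definitions. Given distinct $M_1,M_2\in X$, the hypothesis (T$_0$) produces a formula $\varphi\in\mathcal{A}$ whose corresponding basic open set $X_\varphi$ contains exactly one of the two points, which is precisely the statement that $(X,\mathcal{X}_\mathcal{A})$ satisfies the T$_0$ separation axiom. No further work is needed here.

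For the second assertion, the key observation is that under (NEG), for every $\varphi\in\mathcal{A}$ one also has $\neg\varphi\in\mathcal{A}$, and as a set $X_{\neg\varphi}=X\setminus X_\varphi$ (since every $M\in X$ either satisfies $\varphi$ or satisfies $\neg\varphi$). Thus each basic open $X_\varphi$ is also closed, proving the final clause of the proposition. From this I get Hausdorffness immediately: given distinct $M_1,M_2$, apply (T$_0$) to find $\varphi\in\mathcal{A}$ with, say, $M_1\in X_\varphi$ and $M_2\notin X_\varphi$; then $X_\varphi$ and $X_{\neg\varphi}$ are disjoint basic open neighbourhoods separating $M_1$ and $M_2$. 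Total disconnectedness follows from the same data, because $X_\varphi$ is a clopen set containing $M_1$ but not $M_2$, so the quasi-component (and hence the connected component) of any point is a singleton. Finally, Theorem~\ref{qco} gives quasi-compactness, and combined with the Hausdorff property just established this yields compactness in the usual sense.

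I do not expect any real obstacles: every step is a routine application of the hypotheses plus the previously proved quasi-compactness. The only subtle point worth stating carefully is the identity $X_{\neg\varphi}=X\setminus X_\varphi$, which relies on the fact that the elements of $\mathcal{A}$ are (by the remark preceding the proposition) quantifier-free sentences and hence have a well-defined truth value in each ${\cal L}$-expansion $M\in X$; once that is noted, the rest of the argument is essentially bookkeeping.
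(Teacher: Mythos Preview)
Your proposal is correct and follows essentially the same line as the paper's own proof: first the T$_0$ assertion is immediate from the hypothesis, then (NEG) makes each $X_\varphi$ clopen via $X_{\neg\varphi}=X\setminus X_\varphi$, which yields Hausdorffness and total disconnectedness, and compactness comes from Theorem~\ref{qco}. The paper's version is more terse, but the structure and the ingredients are identical.
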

\begin{proof}
The first assertion is obvious. Now assume that both (T$_0$) and (NEG)
hold. Since ${\cal A}$ is closed under negation, the complement
$X_{\neg\varphi}$ of a basic open set $X_{\varphi}$ is also basic open.
Therefore, each set $X_{\varphi}$ is open and closed, and
$(X,{\cal X}_{\cal A})$ is totally disconnected. If $M_1,M_2\in X$
such that $M_1\ne M_2\,$, then by (T$_0$), there is some $\varphi\in
{\cal A}$ such that $M_1\in X_{\varphi}$ and $M_2\in X_{\neg\varphi}$.
This shows that $(X,{\cal X}_{\cal A})$ is Hausdorff. The compactness
follows from Theorem~\ref{qco}.
\end{proof}

Let us observe the following fact, which we will not need any further:
\begin{corollary}
If ${\cal A}$ satisfies (T$_0$) and (NEG), then ${\cal X}_{\cal A}=
{\cal X}_{\cal Q}$ where $\cal Q$ denotes the set of all quantifier free
elementary ${\cal L}$-sentences.
\end{corollary}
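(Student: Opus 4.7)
The plan is to apply the abstract framework of Theorem \ref{qco} and Proposition \ref{tdcH} to ${\cal Q}$ as well as to ${\cal A}$, and then conclude by a standard compact-Hausdorff argument.

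First I would observe the easy inclusion ${\cal X}_{\cal A}\subseteq {\cal X}_{\cal Q}$. Since (NEG) forces ${\cal A}$ to consist only of quantifier-free sentences (as the note just before Proposition \ref{tdcH} records), we have ${\cal A}\subseteq {\cal Q}$, so every basic open set $X_\varphi$ with $\varphi\in {\cal A}$ is already a basic open set of ${\cal X}_{\cal Q}$. The content of the corollary is therefore the reverse inclusion.

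To obtain the reverse inclusion, I would verify that ${\cal Q}$ itself satisfies the hypotheses of Theorem \ref{qco} and Proposition \ref{tdcH}. Every quantifier-free sentence is (logically equivalent to) an existential sentence, so ${\cal Q}$ qualifies as a set of existential ${\cal L}$-sentences in the sense of the appendix. It is trivially closed under finite conjunctions and under negation, so (NEG) holds for ${\cal Q}$, and (T$_0$) for ${\cal Q}$ is inherited from (T$_0$) for ${\cal A}$ via the inclusion ${\cal A}\subseteq {\cal Q}$. Applying Theorem \ref{qco} and Proposition \ref{tdcH} with ${\cal Q}$ in the role previously played by ${\cal A}$, we conclude that $(X, {\cal X}_{\cal Q})$ is also a totally disconnected compact Hausdorff space.

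Finally, I would invoke the standard topological fact that two comparable compact Hausdorff topologies on the same set must coincide. Since ${\cal X}_{\cal A}\subseteq {\cal X}_{\cal Q}$ and both topologies are compact Hausdorff, the identity map $(X, {\cal X}_{\cal Q})\to (X, {\cal X}_{\cal A})$ is a continuous bijection from a compact space onto a Hausdorff space, hence a homeomorphism, yielding ${\cal X}_{\cal Q}\subseteq {\cal X}_{\cal A}$. The only step requiring a moment's reflection is the legitimacy of feeding ${\cal Q}$ into Theorem \ref{qco}, but this reduces to the trivial remark that quantifier-free sentences are existential, so nothing beyond what the appendix has already proved is needed.
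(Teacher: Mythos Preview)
Your proof is correct and follows essentially the same route as the paper: show ${\cal A}\subseteq{\cal Q}$, verify that ${\cal Q}$ inherits (T$_0$) and satisfies (NEG), apply Proposition~\ref{tdcH} (with Theorem~\ref{qco}) to see that $(X,{\cal X}_{\cal Q})$ is compact Hausdorff, and conclude by the comparability-of-compact-Hausdorff-topologies argument. The paper's version is more terse, but the logical structure is identical; your added remark that quantifier-free sentences qualify as existential for the purposes of Theorem~\ref{qco} is a helpful clarification the paper leaves implicit.
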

\begin{proof}
It is clear that $Q$ satisfies (NEG). Since ${\cal A}\subseteq
{\cal Q}$, we know that $\cal Q$ also satisfies (T$_0$). Hence,
$(X,{\cal X}_{\cal Q})$ is a totally disconnected compact Hausdorff
space. Since the same holds for $(X,{\cal X}_{\cal A})$ and since
${\cal X}_{\cal A}\subseteq {\cal X}_{\cal Q}\,$, we must have that
${\cal X}_{\cal A}={\cal X}_{\cal Q}\,$.
\end{proof}

\begin{theorem}                             \label{Xspsp}
Suppose that ${\cal A}$ satisfies (NEG), and that ${\cal B}$ is a subset
of ${\cal A}$ which satisfies (T$_0$). Then $(X,{\cal X}_{\cal B})$ is a
spectral space and ${\cal X}_{\cal A}$ is its associated patch topology.
\end{theorem}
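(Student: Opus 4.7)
The approach is to verify that $(X,{\cal X}_{\cal B})$ satisfies Hochster's four criteria for spectral spaces---$T_0$, quasi-compactness, sobriety, and having a basis of quasi-compact open sets closed under finite intersection---and then to identify the patch topology by a standard compact-Hausdorff comparison. The first criterion is exactly hypothesis (T$_0$) on ${\cal B}$, and the second is Theorem~\ref{qco} applied to ${\cal B}$ in place of ${\cal A}$. For the basis condition, I would first close ${\cal B}$ under finite conjunctions, which is legitimate for the same reason given in the first paragraph of the appendix and which preserves (T$_0$), so that the basic opens form an intersection-closed basis; since ${\cal B}\subseteq{\cal A}$ and ${\cal A}$ satisfies (NEG), every $\varphi\in{\cal B}$ is quantifier free, so the Corollary following Theorem~\ref{qco} makes each $X_\varphi$ quasi-compact.

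The main work is sobriety. Given an irreducible closed set $C\subseteq X$, I would set $S:=\{\varphi\in{\cal B}\mid X_\varphi\cap C=\emptyset\}$ and $T:=\{\psi\in{\cal B}\mid X_\psi\cap C\neq\emptyset\}$; note that $C=\{M\in X\mid M\not\models\varphi\text{ for all }\varphi\in S\}$, so any $M$ in $X$ satisfying $\{\neg\varphi\mid\varphi\in S\}\cup T$ will lie in $C$ and have every basic open meeting $C$ as a neighbourhood, whence $C\subseteq\overline{\{M\}}$; the reverse inclusion is automatic since $C$ is closed and contains $M$. The plan is to show that the theory ${\cal T}_0\cup{\cal T}\cup\{\neg\varphi\mid\varphi\in S\}\cup T$ is finitely satisfiable in $X$: for any finite subset, conjoining the finitely many sentences $\psi_1,\ldots,\psi_\ell$ drawn from $T$ gives some $\psi\in{\cal B}$ with $X_\psi=\bigcap_j X_{\psi_j}$, and irreducibility of $C$ forces $X_\psi\cap C\neq\emptyset$ (the nonempty relative opens $X_{\psi_j}\cap C$ are pairwise dense in $C$, so their finite intersection is nonempty); any point of $X_\psi\cap C$ is then a finite-level model. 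Applying the semantic compactness theorem of first order logic and then restricting to the universe of $M_0$ exactly as in the proof of Theorem~\ref{qco} yields $M'\in X$ satisfying all of $\{\neg\varphi\mid\varphi\in S\}\cup T$; this preservation works because ${\cal T}$ is universal and every sentence in ${\cal B}$ (hence every negation of such) is quantifier free. Uniqueness of the generic point follows from (T$_0$).

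For the identification of the patch topology, recall that it is generated by the quasi-compact opens of ${\cal X}_{\cal B}$ together with their complements. A quasi-compact open is, by quasi-compactness of itself as a subspace, a finite union $X_{\varphi_1}\cup\ldots\cup X_{\varphi_n}$ with $\varphi_i\in{\cal B}\subseteq{\cal A}$, so it lies in ${\cal X}_{\cal A}$; its complement equals $X_{\neg\varphi_1\wedge\ldots\wedge\neg\varphi_n}$, which also lies in ${\cal X}_{\cal A}$ by (NEG) and closure of ${\cal A}$ under finite conjunctions. Hence the patch topology is coarser than ${\cal X}_{\cal A}$. Since the patch topology of any spectral space is compact Hausdorff, and ${\cal X}_{\cal A}$ is compact Hausdorff by Proposition~\ref{tdcH}, the identity map from $(X,{\cal X}_{\cal A})$ to the patch space is a continuous bijection from a compact space to a Hausdorff space, hence a homeomorphism, and the two topologies coincide. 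I expect sobriety to be the main obstacle, but the irreducibility hypothesis dovetails with the finite-satisfiability machinery already built for Theorem~\ref{qco}, so the argument should be direct.
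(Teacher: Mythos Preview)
Your argument is correct, but it takes a genuinely different route from the paper's own proof. The paper dispatches the theorem in two lines: since $\mathcal{B}\subseteq\mathcal{A}$ satisfies (T$_0$), so does $\mathcal{A}$; then Proposition~\ref{tdcH} says $(X,\mathcal{X}_{\mathcal{A}})$ is a totally disconnected compact Hausdorff space whose basic opens are clopen, and Proposition~7 of Hochster's paper~[H] is invoked as a black box to conclude that the coarser topology $\mathcal{X}_{\mathcal{B}}$ is spectral with patch $\mathcal{X}_{\mathcal{A}}$. In other words, the paper never touches sobriety or the quasi-compact basis directly; all of that is packaged inside Hochster's result, which manufactures spectral spaces from Boolean spaces together with a suitable family of clopens.

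Your approach instead verifies Hochster's axioms from scratch, and the sobriety step is genuinely new content relative to what the appendix proves: you reuse the compactness-of-logic mechanism from Theorem~\ref{qco}, now feeding it the theory $\{\neg\varphi\mid\varphi\in S\}\cup T$ determined by an irreducible closed set, and then exploit that quantifier-free sentences are absolute between $M^*$ and its restriction $M'$. This is a clean self-contained argument very much in the spirit of the appendix, and it has the advantage of not relying on an external reference. The paper's route is shorter precisely because it outsources this work. Your final identification of the patch topology by the compact--Hausdorff comparison is standard and correct; the paper gets it for free from the same Hochster citation.
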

\begin{proof}
If ${\cal A}$ contains a subset ${\cal B}$ which satisfies (T$_0$), then
also ${\cal A}$ satisfies (T$_0$). Thus, our assertion follows from
Proposition~\ref{tdcH} together with Proposition 7 of [H].
\end{proof}

\pars
In order to apply this result to spaces of valuation rings on a fixed
field $F$, we take ${\cal L}_0$ to be the language of rings together
with a set ${\cal F}$ of constant symbols for all elements of $F$, i.e.,
${\cal L}_0= \{+,-, \cdot,0,1\}\cup {\cal F}$. Further,
we set ${\cal L}= {\cal L}_0\cup\{{\cal O}\}$, where ${\cal O}$ is a
unary predicate symbol. We take ${\cal T}_v$ to consist of the
following universal ${\cal L}$-sentences which say that (the
interpretation of) ${\cal O}$ is a subring which is a valuation ring:

1) \ $\forall x\forall y:\; ({\cal O}(x)\wedge {\cal O}(y))
\>\rightarrow\> ({\cal O}(x-y)\wedge {\cal O}(xy))$,

2) \ $\forall x\forall y:\; xy=1 \>\rightarrow\>
({\cal O}(x)\vee {\cal O}(y))$.

\sn
Further, we let ${\cal T}_S$ be an arbitrary set of universal
${\cal L}$-sentences; they single out a universally definable
subset $S$ of valuation rings on $F$. We set $M_0=F$, ${\cal T}=
{\cal T}_v\cup {\cal T}_S\,$,
\[{\cal A}\;=\;\{{\cal O}(a)\,,\, \neg {\cal O}(a)\mid a\in {\cal F}\}
\quad \mbox{\ \ and\ \ } \quad
{\cal B}\;=\;\{{\cal O}(a)\mid a\in {\cal F}\}\,.\]
In this setting, every expansion $M\in X$ is given by the choice of a
valuation ring ${\cal O}$ of $F$ which satisfies ${\cal T}_S\,$. Hence
we have a bijection between $X$ and the set $S(F\,;\,{\cal T}_S)$ of all
valuation rings on $F$ which satisfy ${\cal T}_S\,$, and we identify
these sets. Then ${\cal X}_{\cal B}$ is the Zariski topology on $S(F\,;\,
{\cal T}_S)$. We note that for $a\ne 0$, $a\notin {\cal O}$ is
equivalent to $a^{-1} \in {\cal M}$. Therefore, ${\cal X}_{\cal A}$ is
the patch topology on $S(F\,;\,{\cal T}_S)$, as defined by the basic
open sets (\ref{PT}) in the introduction.

Suppose that ${\cal O}_1$ and ${\cal O}_2$ are two distinct valuation
rings on $F$. Then there is some $a\in F$ such that $a\in {\cal O}_1
\setminus {\cal O}_2$ or $a\in {\cal O}_2 \setminus {\cal O}_1\,$. So we
see that ${\cal B}$ satisfies (T$_0$). Clearly, ${\cal A}$ satisfies
(NEG). Hence by Theorem~\ref{Xspsp}, we obtain:

\begin{theorem}                             \label{Zssps}
The Zariski space $S(F\,;\,{\cal T}_S)$ together with the Zariski
topology given by the basic open sets (\ref{ZT}) is a spectral space,
and the topology given by the basic open sets (\ref{PT}) is its
associated patch topology.
\end{theorem}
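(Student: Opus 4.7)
The plan is to read off the conclusion directly from Theorem~\ref{Xspsp} applied to the model-theoretic setup fixed immediately before the statement. First I would identify $X$ with $S(F\,;\,{\cal T}_S)$ via the bijection $M\mapsto {\cal O}^M$, where ${\cal O}^M$ is the interpretation in $M$ of the unary predicate ${\cal O}$; the axioms in ${\cal T}_v$ force this interpretation to be a valuation ring of $F$, and the extra axioms in ${\cal T}_S$ cut $X$ down to exactly the prescribed subset of valuation rings.

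Next I would verify that the two candidate topologies on $X$ coincide with the two topologies in the statement. A basic open set $X_\varphi$ with $\varphi\equiv {\cal O}(a)$ is $\{M\in X\mid a\in {\cal O}^M\}$, so finite intersections of such sets are exactly the sets (\ref{ZT}); hence ${\cal X}_{\cal B}$ is the Zariski topology. For the finer topology I would invoke the observation recorded in the preceding paragraph of the paper: if $a\ne 0$ then $\neg {\cal O}(a)$ is equivalent to $a^{-1}\in {\cal M}$, while for $a=0$ the condition ${\cal O}(a)$ is trivial; unwinding this shows that finite intersections of sets $X_\varphi$ with $\varphi\in {\cal A}$ are precisely the sets (\ref{PT}), so ${\cal X}_{\cal A}$ is the patch topology.

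It then remains to check the hypotheses of Theorem~\ref{Xspsp}. By construction ${\cal A}=\{{\cal O}(a),\neg{\cal O}(a)\mid a\in {\cal F}\}$ is closed under negation, so (NEG) holds, and ${\cal B}=\{{\cal O}(a)\mid a\in {\cal F}\}$ is a subset of ${\cal A}$. The (T$_0$) property for ${\cal B}$ is the elementary fact that two distinct valuation rings ${\cal O}_1\ne {\cal O}_2$ of $F$ must differ on some element $a\in F$, and then the basic open set corresponding to ${\cal O}(a)$ separates them. With all hypotheses satisfied, Theorem~\ref{Xspsp} delivers both that $(S(F\,;\,{\cal T}_S),{\cal X}_{\cal B})$ is a spectral space and that ${\cal X}_{\cal A}$ is its associated patch topology.

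The main obstacle, such as it is, lies in checking that the topologies on the model-theoretic side really coincide with (\ref{ZT}) and (\ref{PT}); but this is purely a matter of unwinding the definitions, the critical equivalence between $\neg {\cal O}(a)$ and $a^{-1}\in {\cal M}$ having already been flagged. No deeper work is required, since the substantive compactness and spectrality content has been packaged into Theorem~\ref{qco} and Theorem~\ref{Xspsp}.
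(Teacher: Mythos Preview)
Your proposal is correct and follows essentially the same route as the paper: the paper sets up exactly this model-theoretic framework in the paragraphs preceding the theorem, verifies (T$_0$) for ${\cal B}$ and (NEG) for ${\cal A}$ in the same way, identifies ${\cal X}_{\cal B}$ and ${\cal X}_{\cal A}$ with the topologies (\ref{ZT}) and (\ref{PT}) via the same $\neg{\cal O}(a)\Leftrightarrow a^{-1}\in{\cal M}$ observation, and then simply invokes Theorem~\ref{Xspsp}.
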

\n
Taking ${\cal T}_S=\emptyset$, we see that these assertions hold in
particular for the Zariski space of all valuation rings on a fixed
field $F$.

\pars
As we have constant symbols for all elements of $K$ in our language
${\cal L}$, we can take ${\cal T}_S$ to be a set of quantifier free
${\cal L}$-sentences in ${\cal T}$ which state that ${\cal O}\cap K$ is
the valuation ring ${\cal O}_{\Pzero}$ of a given place $\Pzero$ on $K$.
Then $S(F\,;\,{\cal T}_S) = S(F|K\,;\,\Pzero)$.

\begin{corollary}                            \label{Zsspsc}
The assertions of Theorem~\ref{Zssps} hold in particular for
$S(F|K\,;\,\Pzero)$ and for $S(F|K)$.
\end{corollary}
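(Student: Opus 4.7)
The plan is to deduce this corollary as a direct application of Theorem~\ref{Zssps}. The key observation, already hinted at in the paragraph preceding the statement, is that since the language ${\cal L}$ contains a constant symbol for every element of $F$ (in particular for every element of $K$), membership in a prescribed valuation ring ${\cal O}_{\Pzero}\subseteq K$ can be expressed by a collection of quantifier-free atomic formulas with no free variables.

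Concretely, I would define
\[
{\cal T}_S \>:=\> \{{\cal O}(a)\mid a\in {\cal O}_{\Pzero}\}\,\cup\,
                 \{\neg{\cal O}(a)\mid a\in K\setminus {\cal O}_{\Pzero}\}\;.
\]
Each element of ${\cal T}_S$ is a quantifier-free (hence universal) ${\cal L}$-sentence, so ${\cal T}={\cal T}_v\cup {\cal T}_S$ is a set of universal sentences, as required by the setup preceding Theorem~\ref{Zssps}. The next step is the obvious verification that a valuation ring ${\cal O}$ of $F$ satisfies ${\cal T}_S$ if and only if ${\cal O}\cap K = {\cal O}_{\Pzero}\,$, which is immediate from the definition: the positive sentences force ${\cal O}_{\Pzero}\subseteq {\cal O}\cap K$, and the negative ones forbid any element of $K\setminus {\cal O}_{\Pzero}$ from lying in ${\cal O}$. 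This identifies $S(F\,;\,{\cal T}_S)$ with $S(F|K\,;\,\Pzero)$ as sets; the basic open sets (\ref{ZT}) and (\ref{PT}) used to topologize $S(F|K\,;\,\Pzero)$ coincide with those arising from ${\cal X}_{\cal B}$ and ${\cal X}_{\cal A}$, respectively, under this identification.

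With these identifications in place, Theorem~\ref{Zssps} applies directly, giving that $S(F|K\,;\,\Pzero)$ with the Zariski topology is a spectral space whose associated patch topology is the one defined by (\ref{PT}). For the assertion about $S(F|K)$ I would simply specialize to $\Pzero=\mbox{id}_K$, so that ${\cal O}_{\Pzero}=K$, ${\cal T}_S=\{{\cal O}(a)\mid a\in K\}$, and $S(F|K\,;\,\mbox{id}_K)=S(F|K)$ as noted in the introduction.

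There is no real obstacle here: the work has all been done in Theorem~\ref{qco}, Proposition~\ref{tdcH} and Theorem~\ref{Zssps}, and the corollary is a specialization of the general framework. The only point requiring any care is the initial choice of ${\cal T}_S$, and this is forced by the condition that ${\cal O}\cap K$ equal ${\cal O}_{\Pzero}\,$; the availability of a constant symbol for every element of $K$ in the language ${\cal L}$ makes this encoding quantifier-free and thereby fits the hypotheses of Theorem~\ref{Zssps} on the nose.
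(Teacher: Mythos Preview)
Your proposal is correct and follows exactly the approach the paper takes: the paragraph immediately preceding the corollary already indicates that one should choose ${\cal T}_S$ to be a set of quantifier-free sentences expressing ${\cal O}\cap K={\cal O}_{\Pzero}$, and you have simply written this set down explicitly and verified the identification. There is nothing to add.
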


\parb
Let us demonstrate the usefulness of Theorem~\ref{Xspsp} by two more
applications, the first of which can be seen as a generalization of
Theorem~\ref{Zssps}. In both applications, let $R$ be a commutative ring
with unity.

The set Spv$(R)$ of all valuations (in the sense of [HU--KN]) on $R$ is
called the \bfind{valuation spectrum} of $R$. As $R$ will in general not
be a field, instead of a unary predicate for the valuation ring we have
to use a binary relation for what is called \bfind{valuation
divisibility}. We write $x|y\Longleftrightarrow vx\leq vy$. In order to
encode a valuation, the relation $|$ has to satisfy the following
universal axioms (cf.\ [HU--KN]):

1) \ $\forall x \forall y :\; x|y\>\vee\>y|x$,

2) \ $\forall x \forall y \forall z:\; (x|y\>\wedge\>y|z)\>\rightarrow\>
x|z$,

3) \ $\forall x \forall y \forall z:\; (x|y\>\wedge\>x|z)\>\rightarrow\>
x|y+z$,

4) \ $\forall x \forall y \forall z:\; x|y\>\rightarrow\> xz|yz$,

5) \ $\forall x \forall y \forall z:\; (xz|yz\>\wedge\>
0\!\not|\;z)\>\rightarrow\> x|y$,

6) \ $0\!\not|\; 1$.

\sn
We take ${\cal R}$ to be a set of constant symbols for all elements of
$R$, and ${\cal L}_0= \{+,-,\cdot,0,1\}\cup {\cal R}$. Further, we take
${\cal L}= {\cal L}_0\cup\{\>|\>\}$ with $|$ a binary predicate symbol,
$M_0=R$, and ${\cal T}$ to consist of the above axioms. One
possible pair of topologies on the valuation spectrum is given by the
sets
\[{\cal A}\;=\;\{a|b\,,\, a\!\not|\;b\mid a,b\in {\cal R}\} \quad
\mbox{\ \ and\ \ } \quad {\cal B}\;=\;\{a|b\mid a,b\in {\cal R}\}\,.\]
It is clear that ${\cal B}$ satisfies (T$_0$). Now Theorem~\ref{Xspsp}
shows:
\begin{theorem}
The valuation spectrum of $R$ is a spectral space, for the topology
whose basic open sets are of the form $\{v\in\mbox{\rm Spv}(R)\mid va_1
\leq vb_1\,,\ldots,\, va_k\leq vb_k\}$. The basic open sets of its
patch topology are of the form $\{v\in\mbox{\rm Spv}(R)\mid va_1\leq
vb_1\,,\ldots,\, va_k\leq vb_k\>;$ $vc_1<vd_1\,,\ldots,\, vc_\ell<
vd_\ell\}$.
\end{theorem}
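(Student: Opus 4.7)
The plan is to apply Theorem~\ref{Xspsp} to the language, theory, and sentence sets ${\cal A}$ and ${\cal B}$ that have already been specified above, exactly as was done for the field case leading to Corollary~\ref{Zsspsc}. Taking $M_0 = R$, an ${\cal L}$-expansion of $M_0$ is nothing but a choice of binary relation on $R$ to interpret the symbol $\,|\,$. The six axioms of ${\cal T}$ say precisely that such a relation is the valuation divisibility $vx \leq vy$ of some valuation $v$ on $R$ in the sense of [HU--KN], so $X$ is naturally in bijection with $\mbox{\rm Spv}(R)$.

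Next I would check that the hypotheses of Theorem~\ref{Xspsp} are satisfied. The set ${\cal A} = \{a|b,\, a\!\not|\;b \mid a,b \in {\cal R}\}$ trivially satisfies (NEG). To see that ${\cal B} = \{a|b \mid a, b \in {\cal R}\}$ satisfies (T$_0$), note that two points of $\mbox{\rm Spv}(R)$ are by definition equal precisely when they determine the same divisibility relation on $R$; so if $v_1 \neq v_2$ in $\mbox{\rm Spv}(R)$ there must exist $a, b \in R$ for which $v_1 a \leq v_1 b$ holds while $v_2 a \leq v_2 b$ fails (or the symmetric situation), and the corresponding sentence in ${\cal B}$ separates the associated expansions.

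It remains to match ${\cal X}_{\cal B}$ and ${\cal X}_{\cal A}$ with the topologies described in the statement. Finite conjunctions of atomic formulas $a_i | b_i$ give exactly the basic opens $\{v \mid v a_i \leq v b_i,\; 1 \leq i \leq k\}$, so ${\cal X}_{\cal B}$ is the first topology. For the patch topology, a negated atom $\neg(d | c)$ is equivalent, by the totality axiom 1 of ${\cal T}$, to $v c < v d$; hence finite conjunctions in ${\cal A}$ produce basic opens in the form stated. Theorem~\ref{Xspsp} then yields that $\mbox{\rm Spv}(R)$ is spectral with respect to the first topology and that the second is its associated patch topology. No serious obstacle is expected; the one point requiring care is the bijection between $X$ and $\mbox{\rm Spv}(R)$, which relies on the standard convention of identifying equivalent valuations on $R$, and that convention is exactly what makes (T$_0$) for ${\cal B}$ hold.
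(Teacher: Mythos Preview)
Your proposal is correct and follows exactly the same route as the paper: the paper's proof consists of the single sentence ``It is clear that ${\cal B}$ satisfies (T$_0$). Now Theorem~\ref{Xspsp} shows [the theorem],'' and you have simply spelled out the verification of (T$_0$), (NEG), and the identification of the two topologies in more detail. Your translation of $\neg(d\,|\,c)$ into $vc<vd$ via axiom~1 is the right way to match the patch-topology basic opens with those in the statement.
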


Note that Huber and Knebusch prefer to work with different topologies
which essentially are obtained from the above by adding the condition
$a\not= 0$. Yet the above theorem remains true (cf.\ [HU--KN]).

\parm
The \bfind{real spectrum} of the ring $R$ is the set of preorderings $P$
with support a prime ideal and satisfying $P\cup -P=R$ and $-1\notin P$
(cf.\ [B--C--R], [C--R], [KN--S]). That is, it can be presented by all
(interpretations of) unary predicates $P$ which satisfy the following
universal axioms:

1) \ $\forall x\forall y:\; (P(x)\wedge P(y))
\>\rightarrow\> (P(x+y)\wedge P(xy))$,

2) \ $\forall x:\; P(x^2)$,

3) \ $\forall x:\; P(x)\vee P(-x)$,

4) \ $\neg P(-1)$,

5) \ $\forall x\forall y:\; (P(xy)\wedge P(-xy))
\>\rightarrow\> (P(x)\wedge P(-x))\vee(P(y)\wedge P(-y))$.

\sn
We take ${\cal L}_0$ and $M_0$ as before. Further, we take
${\cal L}= {\cal L}_0\cup\{P\}$ with $P$ a unary predicate symbol, and
${\cal T}$ to consist of the above axioms. The topologies on the real
spectrum are given by the sets
\[{\cal A}\;=\;\{P(a)\,,\, \neg P(a)\mid a\in {\cal R}\} \quad
\mbox{\ \ and\ \ } \quad {\cal B}\;=\;\{P(a)\mid a\in {\cal R}\}\,.\]
Again, ${\cal B}$ satisfies (T$_0$). So Theorem~\ref{Xspsp} shows:
\begin{theorem}
The real spectrum of $R$ is a spectral space, for the topology whose
basic open sets are of the form $\{P\in\mbox{\rm Sper}(R)\mid a_1
,\ldots,a_k\in P\}$. The basic open sets of its patch topology are of
the form $\{P\in\mbox{\rm Sper}(R)\mid a_1 ,\ldots,a_k\in P\>;\>
b_1,\ldots,b_\ell\notin P\}$.
\end{theorem}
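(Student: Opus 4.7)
The plan is to apply Theorem~\ref{Xspsp} to the ${\cal L}$-setup fixed just above the statement, exactly parallel to the valuation spectrum treatment that immediately precedes it. First I would make explicit the bijection between the set $X$ of Theorem~\ref{Xspsp} and $\mbox{\rm Sper}(R)$: each ${\cal L}$-expansion $M\in X$ of $M_0=R$ is determined by the interpretation of the unary predicate $P$ as a subset of $R$, and the five universal axioms collected in ${\cal T}$ are precisely the defining axioms of a preordering with support a prime ideal not containing $-1$. Under this identification, $X_{P(a)}$ corresponds to $\{P\in\mbox{\rm Sper}(R)\mid a\in P\}$ and $X_{\neg P(a)}$ to $\{P\in\mbox{\rm Sper}(R)\mid a\notin P\}$, so ${\cal X}_{\cal B}$ is exactly the topology described in the first half of the statement and ${\cal X}_{\cal A}$ is exactly the patch topology described in the second half.

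Next I would verify the two hypotheses of Theorem~\ref{Xspsp}. The condition (NEG) for ${\cal A}$ is immediate from its definition, since both $P(a)$ and $\neg P(a)$ lie in ${\cal A}$ for every $a\in{\cal R}$ (and $\neg\neg P(a)$ is equivalent to $P(a)$). For the condition (T$_0$) applied to the subset ${\cal B}\subseteq{\cal A}$, take two distinct preorderings $P_1,P_2\in\mbox{\rm Sper}(R)$; by definition they differ as subsets of $R$, so there is some $a\in R$ lying in one but not the other, and then $\varphi:=P(a)\in{\cal B}$ separates them in the sense required by (T$_0$).

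With both hypotheses in place, Theorem~\ref{Xspsp} yields at once that $(X,{\cal X}_{\cal B})$ is a spectral space whose associated patch topology is ${\cal X}_{\cal A}$; unraveling the bijection then gives the two assertions of the statement. No step presents a serious obstacle: all the work has been done once and for all in the general model-theoretic framework of Theorem~\ref{qco} and Theorem~\ref{Xspsp}, and the only content specific to the real spectrum is the trivial verification of (T$_0$) via the fact that preorderings which coincide as subsets of $R$ are equal.
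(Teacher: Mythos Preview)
Your proposal is correct and follows exactly the paper's approach: the paper simply notes that ${\cal B}$ satisfies (T$_0$) and invokes Theorem~\ref{Xspsp}, and you have spelled out this one-line argument in full detail. There is nothing to add or correct.
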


\mn
A comparable approach to spectral spaces using model theory has been
worked out by R.~Berr in [BE].

\mn
\bn
{\bf References}
\newenvironment{reference}%
{\begin{list}{}{\setlength{\labelwidth}{5em}\setlength{\labelsep}{0em}%
\setlength{\leftmargin}{5em}\setlength{\itemsep}{-1pt}%
\setlength{\baselineskip}{3pt}}}%
{\end{list}}
\newcommand{\lit}[1]{\item[{#1}\hfill]}
\begin{reference}
\lit{[A]} {Abhyankar, S.$\,$: {\it On the valuations centered in a local
domain}, Amer.\ J.\ Math.\ {\bf 78} (1956), 321--348}
\lit{[B--C--R]} {Bochnak, J.\ -- Coste, M.\ -- Roy, M.-F.$\,$: {\it
G\'eom\'etrie alg\'ebrique r\'eelle}, Springer, Berlin (1987);
English translation: {\it Real Algebraic Geometry}, Ergebnisse der
Mathematik und ihrer Grenzgebiete, Vol.\ {\bf 36}, Springer, Berlin
(1998)}
\lit{[BE]} {Berr, R.$\,$: {\it Spectral spaces and first order
theories}, manuscript}
\lit{[BO]} {Bourbaki, N.$\,$: {\it Commutative algebra}, Paris (1972)}
\lit{[C--R]} {Coste, M.\ -- Roy, M.-F.$\,$: {\it La topologie du
spectre r\'eel}, in: Ordered Fields and Real Algebraic Geometry
(D.\ W.\ Dubois and T.\ Recio, eds.), Contemporary Mathematics, Vol.\
{\bf 8}, Amer.\ Math.\ Soc., Providence (1982), 27-59}
\lit{[EN]} {Endler, O.$\,$: {\it Valuation theory}, Berlin (1972)}
\lit{[ER]} {Ershov, Yu.\ L.$\,$: {\it Rational points over Hensel
fields} (in Russian), Algebra i Logika {\bf 6} (1967), 39--49}
\lit{[HA]} {Hartshorne, R.$\,$: {\it Algebraic geometry}, Springer
Graduate Texts in Math.\ {\bf 52}, New York (1977)}
\lit{[HO]} {Hochster, M.$\,$: {\it Prime ideal structure in commutative
rings}, Trans.\ Amer.\ Math.\ Soc.\ {\bf 142} (1969), 43--60}
\lit{[HU--KN]} {Huber, M.\ -- Knebusch, M$\,$: {\it On Valuation
Spectra}, in: Recent Advances in Real Algebraic Geometry and
Quadratic Forms (W.~B.~Jacob, T.-Y.~Lam and R.~O.~Robson, eds.),
Contemporary Mathematics {\bf 155}, Amer.\ Math.\ Soc.\ (1994),
167--206}
\lit{[J--R]} {Jarden, M.\ -- Roquette, P.$\,$: {\it The Nullstellensatz
over $\wp$--adically closed fields}, J.\ Math.\ Soc.\ Japan {\bf 32}
(1980), 425--460}
\lit{[KA]} {Kaplansky, I.$\,$: {\it Maximal fields with valuations I},
Duke Math.\ Journ.\ {\bf 9} (1942), 303--321}
\lit{[K--K]} {Knaf, H.\ -- Kuhlmann, F.-V.$\,$: {\it Abhyankar places
admit local uniformization in any characteristic}, preprint}
\lit{[KN--S]} {Knebusch, M.\ -- Scheiderer, C.$\,$: {\it Einf\"uhrung in
die reelle Algebra}, Vieweg, Braunschweig (1989)}
\lit{[K1]} {Kuhlmann, F.-V.: {\it Henselian function fields and tame
fields}, extended version of Ph.D.\ thesis, Heidelberg
(1990)}
\lit{[K2]} {Kuhlmann, F.-V.: {\it Valuation theory of fields, abelian
groups and modules}, preprint, Heidelberg (1996), to appear in the
``Algebra, Logic and Applications'' series (formerly Gordon and Breach,
eds.\ A.~Mac\-intyre and R.~G\"obel). Preliminary versions of several
chapters are available on the web site\\
http://math.usask.ca/$\,\tilde{ }\,$fvk/Fvkbook.htm.}
\lit{[K3]} {Kuhlmann, F.-V.: {\it On local uniformization in arbitrary
characteristic}, The Fields Institute Preprint Series, Toronto (1997)}
\lit{[K4]} {Kuhlmann, F.--V.$\,$: {\it Valuation theoretic and model
theoretic aspects of local uniformization},
in: Resolution of Singularities --- A
Research Textbook in Tribute to Oscar Zariski. Herwig Hauser, Joseph
Lipman, Frans Oort, Adolfo Quiros (eds.), Progress in Mathematics Vol.\
{\bf 181}, Birkh\"auser Verlag Basel (2000), 381-456}
\lit{[K5]} {Kuhlmann, F.-V.: {\it Value groups, residue fields and bad
places of rational function fields}, to appear in Trans.\ Amer.\ Math.\
Soc.}
\lit{[K6]} {Kuhlmann, F.--V.$\,$: {\it Every place admits local
uniformization in a finite purely wild extension of the function field},
preprint}
\lit{[K7]} {Kuhlmann, F.--V.$\,$: {\it The model theory of tame valued
fields}, preprint}
\lit{[K8]} {Kuhlmann, F.--V.$\,$: {\it Algebraic independence of
elements in completions and maximal immediate extensions of valued
fields}, preprint}
\lit{[K--P]} {Kuhlmann, F.--V.\ -- Prestel, A.$\,$: {\it On places of
algebraic function fields}, J.\ reine angew.\ Math.\ {\bf 353}
(1984), 181--195}
\lit{[L]} {Lang, S.$\,$: {\it Some applications of the local
uniformization theorem},  Amer.\ J.\ Math.\ {\bf 76}, (1954), 362--374}
\lit{[POP1]} {Pop, F.$\,$: {\it Embedding problems over large fields},
Forschungsschwerpunkt Arithmetik, Universit\"at Heidelberg /
Universit\"at Mannheim, Heft Nr.\ {\bf 15} (1995)}
\lit{[POP2]} {Pop, F.$\,$: {\it Embedding problems over large fields},
Annals of Math.\ {\bf 144} (1996), 1--34}
\lit{[P--Z]} {Prestel, A.\ -- Ziegler, M.$\,$: {\it Model theoretic
methods in the theory of topological fields}, J.\ reine angew.\ Math.\
{\bf 299/300} (1978), 318--341}
\lit{[RI]} {Ribenboim, P.$\,$: {\it Th\'eorie des valuations}, Les
Presses de l'Uni\-versit\'e de Mont\-r\'eal, Mont\-r\'eal, 2nd ed.\
(1968)}
\lit{[RO]} {Robinson, A.$\,$: {\it Complete Theories}, Amsterdam
(1956)}
\lit{[V]} {Vaqui\'e, M.$\,$: {\it Valuations},
in: Resolution of Singularities --- A
Research Textbook in Tribute to Oscar Zariski. Herwig Hauser, Joseph
Lipman, Frans Oort, Adolfo Quiros (eds.), Progress in Mathematics Vol.\
{\bf 181}, Birkh\"auser Verlag Basel (2000), 381-456}
\lit{[W]} {Warner, S.$\,$: {\it Topological fields}, Mathematics
studies {\bf 157}, North Holland, Amsterdam (1989)}
\lit{[Z1]} {Zariski, O.$\,$: {\it Local uniformization on
algebraic varieties}, Ann.\ Math.\ {\bf 41} (1940), 852--896}
\lit{[Z2]} {Zariski, O.$\,$: {\it The compactness of the Riemann
manifold of an abstract field of algebraic functions}, Bull.\ Amer.\
Math.\ Soc.\ {\bf 50} (1944), 683--691}
\lit{[Z--SA]} {Zariski, O.\ -- Samuel, P.$\,$: {\it Commutative
Algebra}, Vol.\ II, New York--Heidel\-berg--Berlin (1960)}
\end{reference}
\adresse
\end{document}